\theoremstyle{plain}
\newtheorem{theorem}{Theorem}[section]
\newtheorem{corollary}[theorem]{Corollary}
\newtheorem{lemma}[theorem]{Lemma}
\newtheorem{proposition}[theorem]{Proposition}
\newtheorem{definition}[theorem]{Definition}
\newtheorem{assumption}[theorem]{Assumption}
\newtheorem*{definition*}{Definition}
\theoremstyle{remark}
\newtheorem{remark}[theorem]{Remark}
\newtheorem{claim}[theorem]{Claim}
\newtheorem*{claim*}{Claim}
\newtheorem*{remark*}{Remark}
\newtheorem*{example*}{Example}
\newtheorem*{notation*}{Notation}
\numberwithin{equation}{section}
\def\R{{\mathbb R}}
\newcommand{\E}{{\mathbb E}}
\newcommand{\F}{{\mathcal F}}
\newcommand{\one}{{{\bf 1}}}
\renewcommand{\a}{\alpha}
\renewcommand{\b}{\beta}
\newcommand{\eps}{\varepsilon}
\renewcommand{\phi}{\varphi}
\newcommand{\bnu}{\boldsymbol{\nu}}
\newcommand{\bbeta}{\boldsymbol{\eta}}
\newcommand{\dd}{\mathrm{d}}
\newcommand{\dgrad}{\bar\nabla}
\providecommand{\abs}[1]{\left\lvert#1\right\rvert}
\providecommand{\norm}[1]{\left\Vert#1\right\Vert}
\newcommand{\cM}{\mathcal{M}}
\newcommand{\cH}{\mathcal{H}}
\newcommand{\cB}{\mathcal{B}}
\newcommand{\cW}{\mathcal{W}}
\newcommand{\cI}{\mathcal{I}}
\newcommand{\cL}{\mathcal{L}}
\newcommand{\cA}{\mathcal{A}}
\newcommand{\CE}{\mathcal{CE}}
\newcommand{\cF}{\mathcal{F}}
\newcommand{\cP}{\mathscr{P}}
\renewcommand{\tilde}{\widetilde}
\title[Gradient flows of the entropy for jump processes]{Gradient
  flows of the entropy for jump processes}
\author{Matthias Erbar}
\address{
University of Bonn\\
Institute for Applied Mathematics\\
Endenicher Allee 60\\
53115 Bonn\\
Germany}
\email{erbar@iam.uni-bonn.de}
\thanks{ME is supported by Bonn International Graduate School in
  Mathematics} \keywords{Jump process, L\'evy process, gradient
  flow, entropy, optimal transport}
\subjclass[2010]{Primary 60J75; Secondary: 35S10, 45K05, 49J45, 60G51}
\begin{document}

\begin{abstract}
  We introduce a new transportation distance between probability
  measures on $\R^d$ that is built from a L\'evy jump kernel. It is
  defined via a non-local variant of the Benamou-Brenier formula. We
  study geometric and topological properties of this distance, in
  particular we prove existence of geodesics. For translation
  invariant jump kernels we identify the semigroup generated by the
  associated non-local operator as the gradient flow of the relative
  entropy w.r.t. the new distance and show that the entropy is convex
  along geodesics.
\end{abstract}

\date\today

\maketitle

\tableofcontents

\section{Introduction}\label{sec:intro}

In the last two decades the theory of optimal transportation has found
applications to many areas of mathematics such as partial differential
equations, geometry and probability. We refer the reader to the
monograph \cite{Vil09} for an overview. In particular, optimal
transport has proved very useful in the study of diffusion
processes. One of the most striking examples is Otto's discovery
\cite{JKO98,O01} that many diffusion equations can be interpreted as
gradient flows of a suitable free energy functional with respect to
the $L^2$-Wasserstein distance on the space of probability measures. A
prominent example is the heat equation which is the gradient flow of
the Shannon entropy. By now, similar interpretations of the heat flow
have been established in a variety of settings ranging from Riemannian
manifolds to abstract metric measure spaces, see
\cite{Er10,OhSt09,FSS10,GKO10,AGS11a}.

The aim of this article is to build a bridge between the theory of
jump processes and non-local operators on one hand and ideas from
optimal transportation on the other hand. We will give a gradient flow
interpretation of the equation
\begin{align}\label{eq:general-evo}
  \partial_t u~=~\cL u\ ,
\end{align}
where $\cL$ is a non-local operator given by
\begin{align*}
 \cL u(x) ~=~\int u(y)-u(x)-(y-x)\cdot\nabla u(x)\one_{\{\abs{y-x}<1\}}J(x,\dd y)\ ,
\end{align*}
with a L\'evy measure $J(x,\dd y)$ for every $x\in\R^d$. Such
operators arise as the generators of a pure jump Feller process. For
this purpose the Wasserstein distance is not appropriate. The main
contribution of this article is thus the construction of a new
transportation distance on the space of probability measures that is
non-local in nature and allows to interpret equation
\eqref{eq:general-evo} formally as the gradient flow of the relative
entropy. We define this distance via a non-local variant of the
dynamical characterization of the Wasserstein distance by Benamou and
Brenier \cite{BB00}. A prominent example we will often consider is
given by the choice $J_\alpha(x,\dd
y)=c_\alpha\abs{y-x}^{-\alpha-d}\dd y$ with $\alpha\in(0,2)$
corresponding to the fractional Laplacian
$\cL=-(-\Delta)^{\frac{\alpha}{2}}$ which is a pseudo differential
operator with symbol $\abs{\xi}^\alpha$. For translation invariant
jump kernels such as $J_\alpha$ where the underlying jump process is a
L\'evy process, we rigorously identify the equation as the gradient
flow of the entropy w.r.t. the new distance in the framework of
gradient flows in metric spaces developed in \cite{AGS05}. Moreover,
we show that the entropy is convex along geodesics.

\medskip

To motivate our interest in such a link between jump processes and
optimal transport, let us highlight two observations.

The gradient flow approach has been used as a powerful tool in the
study of many evolution partial differential equations. Already in
Otto's original work \cite{O01} convexity properties of the entropy
functional have been used to derive explicit rates of convergence to
equilibrium for the porous medium equation. This approach is also well
adapted to the study of functional inequalities, such as logarithmic
Sobolev inequalities (see e.g.  the famous result by Otto-Villani
\cite{OV00}).  Recently, it has been shown that the gradient flow
characterization provides a good framework to study stability
properties of diffusion processes under changes of the driving
potential or the underlying geometry \cite{ASZ09}, \cite{Gi10}.

The regularity theory for elliptic and parabolic equations involving
non-local operators is under active development including both
analytic and probabilistic approaches (see e.g. \cite{CS11},
\cite{BBCK09} and references therein). In a local setting very precise
regularity results can be obtained using a lower bound on the Ricci
curvature of the operator in the sense of the Bakry-\'Emery criterion
\cite{BE85}. Equivalently, such curvature information can be encoded
into convexity properties of the entropy along Wasserstein
geodesics. In fact, geodesic convexity of the entropy has been used as
a synthetic notion of a lower Ricci curvature bound for metric measure
spaces by Lott--Villani \cite{LV09} and Sturm \cite{St06}. In this
sense the approach presented here could be used to define an
alternative notion of curvature in the spirit of Lott--Villani--Sturm
that might be more adapted to certain situations than the non-local
$\Gamma^2$-calculus. In the discrete setting of finite Markov chains,
this approach has already been used in \cite{EM11} to derive new
functional inequalities.

\medskip

Modifications of the Wasserstein distance have been considered
recently by a number of authors. In \cite{DNS09} Dolbeault, Nazaret
and Savar\'{e} proposed a new class of transport distances based on an
adaptation of the Benamou-Brenier formula to give a gradient flow
interpretation to a class of transport equations with non-linear
mobilities. Very recently, Maas \cite{Ma11} (see also \cite{Mie11b},
\cite{CHLZ11} for independent related work by Mielke and Chow et al.)
introduced a distance between probability measures on a discrete space
equipped with a Markov kernel such that the law of the continuous time
Markov chain evolves as the gradient flow of the entropy. Our approach
is very similar in spirit to the work of Maas and generalizes it to a
certain extend. On the technical side we use an adaptation of the
techniques developed in \cite{DNS09} to our non-local setting.

\subsection*{Main results}
Let us now discuss the content of this article in more detail.
Let $(J(x,\cdot),x\in\R^d)$ be a jump kernel. By this we mean that for
all $x\in\R^d$ $J(x,\cdot)$ is a Radon measure on $\R^d\setminus\{x\}$
depending measurably on $x$. Throughout this text $J$ shall satisfy
the following 

\begin{assumption}\label{ass:standing}
  For every bounded continuous function $f:\R^d\to\R$ the mapping
  \begin{align*}
    x~\mapsto~\int f(y) (1\wedge \abs{x-y}^2)J(x,\dd y)
  \end{align*}
  is again bounded and continuous.
\end{assumption}

In particular $(J(x,\cdot),x\in\R^d)$ is a so called L\'evy kernel
(see e.g. \cite[Ch. 3.5]{App04}). Further let $m$ be a Radon measure
on $\R^d$. We assume that $J$ is reversible w.r.t. $m$, i.e. the
measure $J(x,\dd y) m(\dd x)$ is symmetric.

We denote by $\cP(\R^d)$ the space of Borel probability measures on
$\R^d$. Given $\mu\in\cP(\R^d)$ we define its relative entropy
w.r.t. $m$ by
\begin{align*}
  \cH(\mu)~=~\int\rho\log\rho\; \dd m
\end{align*}
if $\mu$ is absolutely continuous w.r.t. $m$ with density $\rho$ and
$(\rho\log\rho)_+$ is integrable. Otherwise we set $\cH(\mu)=+\infty$.

\subsubsection*{A non-local transportation distance}
Let us first motivate the construction of our new metric by recalling
the dynamical characterization of the $L^2$-Wasserstein distance. The
Benamou-Brenier formula \cite{BB00} asserts that for two probability
densities $\bar\rho_0,\bar\rho_1$ on $\R^d$ we have
\begin{align}\label{eq:BBclassic}
  W^2_2(\bar\rho_0,\bar\rho_1)~=~\inf\limits_{\rho,\psi}\int_0^1\int\abs{\nabla\psi_t(x)}^2\rho_t(x)\dd
  x \dd t\ ,
\end{align}
where the infimum is taken over all sufficiently smooth functions
$\rho:[0,1]\times\R^d\to\R_+$ and $\psi:[0,1]\times\R^d\to\R$ subject
to the continuity equation
\begin{align}\label{eq:ceclassic}
  \begin{cases}
    \partial_t\rho + \nabla\cdot(\rho\nabla\psi)~=~0\ ,\\
    \rho_0=\bar\rho_0\ ,\ \rho_1=\bar\rho_1\ .
  \end{cases}
\end{align}
Here we will define a (pseudo-)metric (i.e. possibly attaining the
value $+\infty$) on $\cP(\R^d)$ by giving a non-local analogue of
formulas \eqref{eq:BBclassic} and \eqref{eq:ceclassic}.  In order to
obtain a metric with the desired properties it is necessary to
introduce a function $\theta:\R_+\times\R_+\to\R_+$ satisfying
Assumption \ref{ass:theta} below and to consider the mean
$\hat{\rho}(x,y):=\theta(\rho(x),\rho(y))$ of a given density
$\rho:\R^d\to \R$ at different points. We will be mostly interested in
the logarithmic mean
\begin{align}\label{eq:logmean-intro}
  \theta(s,t)=\frac{s-t}{\log s-\log t}
\end{align}
but for future use we allow for more generality in the
construction. For a function $\psi:\R^d\to\R$ we will denote by
$\dgrad\psi(x,y)=\psi(y)-\psi(x)$ its discrete gradient. Following the
approach of \cite{Ma11} one is led to consider the following
`distance'. Given probability measures $\bar\mu_0=\bar\rho_0 m$ and
$\bar\mu_1=\bar\rho_1m$ set
\begin{equation}\label{eq:metricnaiv}
  \tilde\cW(\bar\mu_0,\bar\mu_1)^2~:=~\inf\limits_{\rho,\psi}\frac{1}{2}\int_0^1\int\abs{\dgrad\psi_t(x,y)}^2\hat{\rho}_t(x,y)J(x,\dd y) m(\dd x)\dd t\ ,
\end{equation}
where the infimum is now taken over all functions $\rho$ and $\psi$
satisfying the `continuity equation'
\begin{align}\label{eq:cenaiv}
  \begin{cases}
    \partial_t\rho_t + \dgrad\cdot(\hat\rho_t\dgrad\psi_t)~=~0\ ,\\
   \rho_0=\bar\rho_0\ ,\ \rho_1=\bar\rho_1\ ,
  \end{cases}
\end{align}
in the sense that for every test function $\phi\in C^\infty_c(\R^d)$
we have
\begin{align*}
  \int\phi\partial_t\rho_t(x) m(\dd x) - \frac12\int\dgrad\phi(x,y)\dgrad\psi(x,y)\hat\rho(x,y)J(x,\dd y)m(\dd x)~=~0\ .
\end{align*}
Instead of addressing the variational problem \eqref{eq:metricnaiv}
directly we will adopt a measure theoretic point of view and recast it
in the more natural relaxed setting of time-dependent families of Radon
measures. Let us briefly sketch this approach. 

We let $G=\{(x,y)\in\R^d\times\R^d\ :\ x\neq y\}$ and fix $\gamma(\dd
x,\dd y)=J(x,\dd y)m(\dd x)$. We replace $\rho$ by a continuous curve
$t\mapsto\mu_t=\rho_tm$ in $\cP(\R^d)$ and $\psi_t$ induces a family
of signed Radon measures $\bnu_t(\dd x,\dd
y)=\dgrad\psi_t(x,y)\hat{\rho}_t(x,y)\gamma(\dd x,\dd y)$ on $G$. The
couple $(\mu,\bnu)$ now satisfies the linear equation
\begin{align}\label{eq:ce-intro}
  \begin{cases}
    \partial_t\mu_t+\dgrad\cdot\bnu_t~=~0\ ,\\
    \mu_0=\bar\mu_0,\ \mu_1=\bar\mu_1
  \end{cases}
\end{align}
which we understand in the sense of distributions, i.e. for all test
functions $\phi\in C^\infty_c((0,1)\times\R^d)$ :
\begin{align*}
  \int_0^1\int\partial_t\phi\dd\mu_t\dd t + \frac12\int_0^1\int\dgrad\phi(x,y)\bnu_t(\dd x,\dd y)\dd t~=~0\ .
\end{align*}
The quantity to be minimized in \eqref{eq:metricnaiv} can now be
rewritten as
\begin{align*}
\frac12\int_0^1\int\abs{\frac{\dd\bnu_t}{\dd\gamma}(x,y)}^2\theta\left(\frac{\dd\mu_t}{\dd m}(x),\frac{\dd\mu_t}{\dd m}(y)\right)^{-1}\gamma(\dd x,\dd y)\dd t\ .
\end{align*}
We will define a distance $\cW$ by proceeding as follows. To any
$\mu\in\cP(\R^d)$ we associate two Radon measures on $G$ by setting
$\mu^1(\dd x,\dd y)=J(x,\dd y)\mu(\dd x)$ and $\mu^2(\dd x,\dd y)=J(y,\dd
x)\mu(\dd y)$. Given a Radon measure $\bnu$ on $G$ we choose a
reference measure $\sigma$ on $G$ such that $\bnu=w\sigma$ and
$\mu^i=\rho^i\sigma,\ i=1,2$ are all absolutely continuous
w.r.t. $\sigma$. Then we define the action functional by
\begin{align*}
  \cA(\mu,\bnu)~:=~\frac12\int\abs{\frac{\dd\bnu}{\dd\sigma}}^2\theta\left(\frac{\dd\mu^1}{\dd\sigma},\frac{\dd\mu^2}{\dd\sigma}\right)^{-1}\; \dd \sigma\ .
\end{align*}
Assumptions on $\theta$ will guarantee that the map $(w,s,t)\mapsto
w^2\theta(s,t)^{-1}$ is homogeneous, hence the definition of $\cA$ is
independent of the choice of $\sigma$. Given two measures
$\bar\mu_0,\bar\mu_1\in\cP(\R^d)$ we denote by
$\CE_{0,1}(\bar\mu_0,\bar\mu_1)$ the set of all sufficiently regular
solutions (to be made precise in section \ref{sec:nonlocal-ce})
$(\mu_t,\bnu_t)_{t\in[0,1]}$ of the continuity equation
\eqref{eq:ce-intro}.

\begin{definition*}\label{def:metric-intro}
 For $\bar\mu_0,\bar\mu_1\in\cP(\R^d)$ we define
\begin{align*}
\cW(\bar\mu_0,\bar\mu_1)^2~:=~\inf\left\{\int_0^1\cA(\mu_t,\bnu_t)\dd t\ :\quad
(\mu,\bnu)\in\CE_{0,1}(\bar\mu_0,\bar\mu_1)\right\}\ .
\end{align*}
\end{definition*}

It is unclear whether $\cW$ coincides with $\tilde\cW$ defined in
\eqref{eq:metricnaiv} in full generality. However, we will give a
positive answer for the more restricted case of a sufficiently regular
translation invariant jump kernel such as $J_\a$ (see Proposition
\ref{prop:equiv-definition}). We can now state the first main result
of this article.

\begin{theorem}\label{thm:1-metric}
  $\cW$ defines a (pseudo-) metric on $\cP(\R^d)$ . The topology it
  induces is stronger than the topology of weak convergence. For each
  $\tau\in\cP(\R^d)$ the set $\cP_\tau:=\{\mu\in\cP(\R^d)\ :\
  \cW(\mu,\tau)<\infty\}$ equipped with the distance $\cW$ is a
  complete geodesic space.
\end{theorem}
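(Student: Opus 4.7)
The plan is to follow the scheme of \cite{DNS09} for dynamical transport distances, adapted to the non-local kernel setting in the spirit of \cite{Ma11}. First I would establish the structural properties of the action functional: the map $(w,s,t)\mapsto w^2/\theta(s,t)$ should be convex, jointly lower semicontinuous on its natural domain, and positively $1$-homogeneous, so that $\cA(\mu,\bnu)$ is well-defined independently of the reference measure $\sigma$ and lower semicontinuous with respect to weak-$*$ convergence of Radon measures. A dual representation of the form $\cA(\mu,\bnu)=\sup\big\{\int\psi\,\dd\bnu-\tfrac12\int|\psi|^2\hat\rho\,\dd\gamma\big\}$ over suitable test functions would make both the lsc and the subsequent estimates transparent. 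Symmetry of $\cW$ follows from reversibility of $J$ together with symmetry of $\theta$ (swap $(x,y)\leftrightarrow(y,x)$ and reverse time). The triangle inequality is obtained via the standard rescaling/concatenation trick: given near-optimal curves joining $\mu_0$ to $\mu_1$ and $\mu_1$ to $\mu_2$, reparametrize each on a sub-interval of length proportional to $\cW(\mu_0,\mu_1)$ and $\cW(\mu_1,\mu_2)$ respectively; the quadratic dependence of the action on speed yields the sum. Positivity of $\cW$ will follow from the topological comparison below.

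Next I would prove the key quantitative estimate underlying the topological comparison: for every $\phi\in C^\infty_c(\R^d)$ and every admissible couple $(\mu_t,\bnu_t)\in\CE_{0,1}(\mu,\nu)$,
\[
\left|\int\phi\,\dd\mu-\int\phi\,\dd\nu\right|\;\le\;C_\phi\sqrt{\int_0^1\cA(\mu_t,\bnu_t)\,\dd t}.
\]
This is obtained by testing the continuity equation against $\phi$, applying Cauchy--Schwarz to the resulting integral against $\bnu_t$, and controlling $\int|\dgrad\phi|^2\hat\rho_t\,\dd\gamma$ through the pointwise bound $|\dgrad\phi(x,y)|^2\le C_\phi(1\wedge|x-y|^2)$ combined with Assumption \ref{ass:standing} and the growth property $\theta(s,t)\le\max(s,t)$ expected from the assumptions on $\theta$. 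Taking the infimum over admissible couples gives $|\int\phi\,\dd(\mu-\nu)|\le C_\phi'\cW(\mu,\nu)$, so $\cW$-convergence implies convergence tested against any $\phi\in C^\infty_c$. As the $\mu_n$ are probability measures, tightness (hence full weak convergence, hence also $\cW(\mu,\nu)=0\Rightarrow\mu=\nu$) is then upgraded by applying the same estimate to a sequence of smooth truncations.

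Existence of geodesics and completeness follow from the direct method combined with these estimates. Given $\bar\mu_0,\bar\mu_1\in\cP_\tau$, take a minimizing sequence $(\mu^n,\bnu^n)\in\CE_{0,1}(\bar\mu_0,\bar\mu_1)$. A uniform action bound gives $|\bnu^n_t|(K)\le\sqrt{2\cA(\mu^n_t,\bnu^n_t)}\cdot\sqrt{\gamma(K)}$ for each compact $K\subset G$, so by a Banach--Alaoglu argument $\bnu^n_t\,\dd t$ is precompact on compact subsets of $(0,1)\times G$. The estimate above provides equi-$1/2$-H\"older continuity of $t\mapsto\mu^n_t$ with respect to a distance metrizing weak convergence, so Arzel\`a--Ascoli yields a limit curve $\mu_t$. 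Linearity of the continuity equation allows passing to the limit, and lsc of $\cA$ shows the limit couple is admissible and minimising; a quadratic reparametrization yields constant-speed geodesics. Completeness is then standard: given a Cauchy sequence, extract a subsequence with $\sum_k\cW(\mu_{n_k},\mu_{n_{k+1}})<\infty$, concatenate the corresponding geodesics into a finite-action curve on $[0,1]$, and extract a limit with the same compactness/lsc machinery.

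The principal technical obstacle lies in the compactness and lower semicontinuity analysis on $G$. Because the base space is unbounded and $J(x,\dd y)$ is singular at the diagonal, the reference measure $\gamma$ is only $\sigma$-finite and concentrates near $\{x=y\}$; one must carefully localise $\bnu^n$ away from the diagonal and control the mass carried to infinity in $\R^d$. Showing that the weak-$*$ limit of the $\bnu^n$ puts no mass on the diagonal, that tightness of the marginals $\mu^n_t$ is preserved uniformly in $(t,n)$, and that the integrand of $\cA$ (featuring the potentially degenerate factor $\theta(\rho^1,\rho^2)^{-1}$) behaves well under the joint weak-$*$ limit, is the delicate heart of the argument and will likely require the dual representation of $\cA$ to be pushed through carefully.
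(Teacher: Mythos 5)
Your proposal is correct and follows essentially the same route as the paper: the homogeneity/convexity/lower semicontinuity properties of the action density, the key estimate obtained by testing the continuity equation against $\phi\in C^\infty_c$ and applying Cauchy--Schwarz with the weight $1\wedge\abs{x-y}$ together with Assumption \ref{ass:standing} (this is the paper's Lemma \ref{lem:integrability}, which yields both the comparison with weak convergence and positivity), the direct method with compactness of bounded-action solutions of the continuity equation for existence of constant-speed geodesics, and lower semicontinuity of $\cW$ for completeness. The deviations are only cosmetic (a duality formula for $\cA$ in place of the cited result on integral functionals of measures, Arzel\`a--Ascoli in place of the paper's pointwise-limit argument in Proposition \ref{prop:cecompactness}), apart from one harmless slip: the local total-variation bound should read $\abs{\bnu^n_t}(K)\leq\sqrt{2\cA(\mu^n_t,\bnu^n_t)}\,\big((\mu^{n,1}_t+\mu^{n,2}_t)(K)\big)^{1/2}$, which is uniformly controlled via Assumption \ref{ass:standing} since $K$ stays away from the diagonal, rather than involving $\gamma(K)$.
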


\subsubsection*{Gradient flow of the entropy}

Let us give a short formal argument why equation
\eqref{eq:general-evo} can be seen as the gradient flow of the
relative entropy w.r.t. the distance $\cW$ if we choose $\theta$ to be
the logarithmic mean.

In the classical setting many partial differential equations of the form 
\begin{align*}
  \partial_t\rho - \nabla\cdot\big(\rho\nabla f'(\rho)\big)~=~0
\end{align*}
can, at least formally, be seen as the gradient flow of the integral
functional $\cF(\rho)=\int f(\rho)\dd m$ w.r.t. the $L^2$-Wasserstein
distance. Hence in the new geometry determined by the distance
$\tilde\cW$ via \eqref{eq:metricnaiv}, \eqref{eq:cenaiv} the gradient
flow of the functional $\cF$ should be given by the equation
\begin{align*}
  \partial_t\rho - \dgrad\cdot\big(\hat\rho\dgrad f'(\rho)\big)~=~0\ .
\end{align*}
If we now consider the relative entropy $\cH$ we have $f'(r)=1 + \log
r$. Taking into account \eqref{eq:logmean-intro} we see that the
corresponding gradient flow is given by
\begin{align*}
\partial_t\rho - \dgrad\cdot\big(\dgrad \rho\big)~=~0\ ,
\end{align*}
which is a weak formulation of \eqref{eq:general-evo}. In particular
we see that the appearance of the logarithmic mean is necessary in
order to account for the fact that the discrete gradient lacks a chain
rule.

In the more restricted setting of a translation invariant jump kernel
we can indeed rigorously identify equation \eqref{eq:general-evo} as
the gradient flow of the relative entropy w.r.t. the corresponding
metric $\cW$ in the framework of the metric theory developed in
\cite{AGS05}. So assume for the rest of this introduction that $J$ satisfies
\begin{align*}
  J(x+z,A+z)~=~J(x,A) \qquad\forall x,z\in\R^d,\ A\subset\R^d\setminus\{x\}
\end{align*}
and let $m$ be Lebesgue measure. Then we can write $J(x,A)=\nu(A-x)$
for a L\'evy measure $\nu$ on $\R^d\setminus\{0\}$. The operator $\cL$
generates a semigroup $P_t=\exp(t\cL)$ in $L^2(\R^d)$ that can be
represented by kernel $p_t$:
\begin{align*}
  P_tf(x)~=~\int f(y)p_t(x,\dd y) \ . 
\end{align*}
In fact $p_t$ is the transition kernel of the L\'evy process with
characteristic triplet $(0,0,\nu)$ in the sense of the
L\'evy-Khinchine formula (see e.g. \cite{App04}). In the same way
$\cL$ generates a semigroup on $\cP(\R^d)$. Under certain further
regularity assumptions on the transition kernel (see Section
\ref{sec:gradflow} for a precise statement) we prove the following

\begin{theorem}\label{thm:2-gradflow}
  The semigroup $P$ generated by $\cL$ is the gradient flow of the
  relative entropy in the sense that it satisfies the Evolution
  Variational Inequality (EVI): For any $\mu\in\cP(\R^d)$ and
  $\sigma\in\cP_\mu$ we have
  \begin{align}\label{eq:EVI-intro}
    \frac12\frac{\dd^+}{\dd t}\cW^2(P_t\mu,\sigma) + \cH(P_t\mu)~\leq~\cH(\sigma)\quad\forall t>0\ .
  \end{align}
  Moreover the entropy is convex along $\cW$-geodesics. More
  precisely, let $\mu_0,\mu_1\in\cP(\R^d)$ such that
  $\cW(\mu_0,\mu_1)<\infty$ and let $(\mu_t)_{t\in[0,1]}$ be a
  geodesic connecting $\mu_0$ and $\mu_1$. Then we have
  \begin{align*}
    \cH(\mu_t)~\leq~(1-t)\cH(\mu_0) + t \cH(\mu_1)\ .
  \end{align*}
\end{theorem}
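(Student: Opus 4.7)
The plan is to situate the problem in the Ambrosio-Gigli-Savar\'e framework for gradient flows in metric spaces \cite{AGS05} and derive both conclusions from two ingredients: (i) an entropy-dissipation identity identifying $(P_t\mu)$ as an absolutely continuous curve in $(\cP(\R^d),\cW)$ realizing the steepest descent of $\cH$, and (ii) geodesic convexity of $\cH$ along $\cW$-geodesics. Once these are in place, the EVI \eqref{eq:EVI-intro} will follow by an ``action estimate'' argument of Daneri-Savar\'e type, while the second conclusion of the theorem is precisely (ii).

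For (i) I would first observe that reversibility of $J$ with respect to $m$ yields, after antisymmetrization, for a smooth positive density $\rho$,
\[
\int\phi\,\cL\rho\,\dd m \;=\; -\frac12\int\dgrad\phi(x,y)\,\dgrad\rho(x,y)\,\gamma(\dd x,\dd y)\ ,
\]
so the curve $(\mu_t,\bnu_t):=(P_t\mu,\dgrad\rho_t\cdot\gamma)$ solves the non-local continuity equation \eqref{eq:ce-intro}. The logarithmic mean in \eqref{eq:logmean-intro} is tailored so that
\[
\cA(\mu_t,\bnu_t)\;=\;\tfrac12\int(\rho_t(y)-\rho_t(x))(\log\rho_t(y)-\log\rho_t(x))\,J(x,\dd y)m(\dd x)\;=:\;\cI(\mu_t)
\]
equals the Dirichlet-form energy of $\rho_t$, while the classical entropy production identity $\ddt\cH(P_t\mu)=-\cI(P_t\mu)$ is obtained by testing with $1+\log\rho_t$ and bootstrapping from the smoothing properties of the L\'evy semigroup. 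Combining this with the general lower bound $|\dot\mu_t|^2\leq\cA(\mu_t,\bnu_t)$ for curves in $\CE_{0,1}$ and the chain rule $\frac{\dd}{\dd s}\cH(\mu_s)=\frac12\int\dgrad\log\rho_s\cdot\dd\bnu_s$ will show that $P_t\mu$ is a metric gradient flow of $\cH$ in the EDI sense.

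The hard part is (ii), geodesic convexity, and here I would exploit translation invariance crucially. Convolution with the transition kernel $p_\eps$ commutes with the discrete gradient and with the continuity equation; moreover, the joint convexity and $2$-homogeneity of $(w,s,t)\mapsto w^2/\theta(s,t)$ (built into the assumption on $\theta$), combined with translation invariance of $\cA$, give via Jensen's inequality that $\cA$ does not increase under convolution with $p_\eps$. Consequently, given a $\cW$-geodesic $(\mu_s)_{s\in[0,1]}$, the regularizations $\mu_s^\eps:=P_\eps\mu_s$ will carry smooth strictly positive densities and be asymptotically length-minimizing as $\eps\to 0$, so the formal chain rule for $\cH$ can be rigorously justified along them. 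I would then compute the second derivative of $s\mapsto\cH(\mu_s^\eps)$ and verify non-negativity via a Cauchy-Schwarz-type estimate, using the defining relation $(\log a-\log b)\theta(a,b)=a-b$ and the symmetry of $\gamma$; passing $\eps\to 0$ through lower semicontinuity of $\cH$ and stability of the action yields convexity along the original geodesic.

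With (i) and (ii) in hand, the EVI follows as follows: for $\mu\in D(\cH)$ and $\sigma\in\cP_\mu$, I would pick a $\cW$-geodesic $(\sigma_s)_{s\in[0,1]}$ from $\mu$ to $\sigma$, perturb it into a family of admissible competitors in $\CE_{0,1}(P_t\mu,\sigma)$ by concatenating with a short piece of the semigroup flow, and combine the chain rule with geodesic convexity to bound $\ddtr\tfrac12\cW^2(P_t\mu,\sigma)\leq\cH(\sigma)-\cH(P_t\mu)$. The principal technical hurdle throughout is the rigorous justification of chain rules and action manipulations outside of smooth positive densities; translation invariance and the smoothing effect of $P_t$ are the tools that would reduce everything to the smooth positive case.
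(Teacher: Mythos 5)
Your overall architecture is reversed relative to what can actually be carried out with the tools available, and the two load-bearing steps as you sketch them have genuine gaps. First, your step (ii): you cannot prove geodesic convexity ``directly'' by regularizing a geodesic with $P_\eps$, computing $\partial_s^2\cH(\mu_s^\eps)$ and invoking a Cauchy--Schwarz estimate. The curve $s\mapsto P_\eps\mu_s$ is no longer a geodesic, and showing a second-derivative inequality along an almost-minimizing curve does not transfer to convexity along the original geodesic without a quantitative almost-convexity argument; more fundamentally, to differentiate $\cH$ twice along a geodesic you would need a characterization of $\cW$-geodesics (a non-local Hamilton--Jacobi system for $(\rho_s,\psi_s)$) together with regularity of $\psi_s$, none of which is established in this setting, and non-negativity of the resulting expression is not a soft Cauchy--Schwarz fact --- it is exactly the nontrivial curvature statement at stake. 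Second, your final step derives the EVI \emph{from} geodesic convexity plus a chain rule; but in general metric spaces EVI is strictly stronger than geodesic convexity, so this implication is not available, and the competitor you propose (concatenating a geodesic from $\mu$ to $\sigma$ with a short piece of the semigroup trajectory) only yields triangle-inequality/EDI-type bounds, not the inequality $\tfrac12\tfrac{\dd^+}{\dd t}\cW^2(P_t\mu,\sigma)\le\cH(\sigma)-\cH(P_t\mu)$.

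The paper goes the other way. It proves the EVI directly by a Daneri--Savar\'e-type action estimate: given a minimizing curve $(\mu_s,\bnu_s)$ from $\sigma$ to $\mu$, one uses the \emph{diagonal} interpolation $\mu^\eps_{s,t}=P_{st+\eps}[\mu_s]$, with velocity corrected by $-t\,\dgrad\rho^\eps_{s,t}$ so that the continuity equation holds between $\sigma_\eps$ and $\mu_{t+\eps}$ (Claim 5.6). Expanding the action of this corrected velocity, dropping the quadratic term in $t$, using monotonicity of $\cA$ under convolution (Proposition 4.11/2.8) and the entropy-production identity along the interpolation gives $\tfrac12\cW(\mu_{t+\eps},\sigma_\eps)^2\le\tfrac12\cW(\mu,\sigma)^2-t\,(\cH(\mu_{t+\eps})-\cH(\sigma_\eps))$, and passing $\eps\to0$, $t\searrow0$ yields the EVI; your ingredients (i) --- the dissipation identity $\ddt\cH(P_t\mu)=-\cI(P_t\mu)$ and the monotonicity of the action under convolution --- do appear in the paper and are correct, but they enter this interpolation argument rather than an EDI/convexity route. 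Geodesic convexity is then obtained as a corollary of the EVI via the general result of Daneri--Savar\'e, i.e.\ in the opposite logical direction from your plan.
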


Among several ways to characterize gradient flows in metric spaces,
the EVI is one of the strongest. For example it implies geodesic
convexity of the entropy (see \cite{DS08}). Convexity of the entropy
along $\cW$-geodesics can be seen as a non-local analogue of McCann's
displacement convexity \cite{McC97}, which corresponds to convexity
along geodesics of the $L^2$-Wasserstein distance. For the choice
$\nu(\dd y)=c_\alpha\abs{y}^{-\alpha-d}\dd y$ with $\alpha\in(0,2)$
and a suitable constant $c_\a$ we obtain the following

\begin{corollary}\label{cor:frac-laplace-intro}
  The semigroup generated by the fractional Laplacian
  $-(-\Delta)^{\frac{\alpha}{2}}$ is the gradient flow of the relative
  entropy w.r.t. the metric $\cW$ built from the jump kernel
  $J_\a(x,dy)=c_\alpha\abs{y-x}^{-\alpha-d}\dd y$.
\end{corollary}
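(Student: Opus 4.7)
The plan is to deduce the corollary directly from Theorem \ref{thm:2-gradflow} by verifying that the jump kernel $J_\alpha(x,\dd y)=c_\alpha\abs{y-x}^{-\alpha-d}\dd y$ fits into the framework set up for that theorem, and that the associated non-local operator $\cL$ coincides with $-(-\Delta)^{\alpha/2}$.

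First I would check the structural hypotheses. Translation invariance of $J_\a$ is immediate since the density depends only on $\abs{y-x}$, and the same symmetry shows that $J_\a(x,\dd y)\,\dd x$ is a symmetric measure, i.e.\ $J_\a$ is reversible with respect to Lebesgue measure $m$. The L\'evy measure is $\nu_\a(\dd z)=c_\a\abs{z}^{-\a-d}\dd z$; one has $\int (1\wedge \abs{z}^2)\nu_\a(\dd z)<\infty$ precisely because $\a\in(0,2)$ (so $\abs{z}^{2-\a-d}$ is integrable near $0$ and $\abs{z}^{-\a-d}$ is integrable at infinity). This finiteness together with translation invariance and dominated convergence yields Assumption \ref{ass:standing}: for bounded continuous $f$, the map $x\mapsto\int f(x+z)(1\wedge\abs{z}^2)\nu_\a(\dd z)$ is bounded and, by dominated convergence applied along any convergent sequence $x_n\to x$, continuous.

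Next I would identify the generator. Since the characteristic triplet of the associated L\'evy process is $(0,0,\nu_\a)$, the L\'evy-Khinchine formula gives the Fourier symbol of $\cL$; with the normalising constant $c_\a$ chosen appropriately this symbol equals $-\abs{\xi}^\a$, which is precisely the symbol of $-(-\Delta)^{\a/2}$. Hence $\cL=-(-\Delta)^{\a/2}$ as operators on, say, the Schwartz class, and the semigroup $P_t$ coincides with the heat semigroup of the symmetric $\a$-stable process.

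It then remains to verify the additional regularity hypotheses on the transition kernel $p_t$ alluded to in Section \ref{sec:gradflow} (smoothness and suitable decay of $p_t$, quantitative integrability, etc.). For the symmetric $\a$-stable process these are classical: $p_t(x,y)=p_t(y-x)$ is a smooth density which is strictly positive, radially decreasing, scales as $p_t(x)=t^{-d/\a}p_1(t^{-1/\a}x)$, satisfies two-sided bounds of the form $p_t(x)\asymp t^{-d/\a}\wedge t\abs{x}^{-d-\a}$, and has all the moment and logarithmic-integrability properties one might need. The main obstacle in a rigorous write-up is thus the bookkeeping of these regularity properties against the exact list required in Section \ref{sec:gradflow}; modulo this check, the corollary follows by invoking Theorem \ref{thm:2-gradflow} for $\cW$ built from $J_\a$, which both gives the EVI characterisation of $P_t=\exp(-t(-\Delta)^{\a/2})$ as the gradient flow of $\cH$ with respect to $\cW$ and the geodesic convexity of $\cH$ along $\cW$-geodesics.
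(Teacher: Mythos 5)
Your proposal follows the same route as the paper: the corollary is obtained there simply by specialising Theorem \ref{thm:2-gradflow} (proved as Theorem \ref{thm:EVIforJP}) to the translation-invariant kernel $J_\a$, with Remark \ref{rem:regularity} asserting that for a suitable $c_\a$ the associated L\'evy process is the symmetric isotropic $\alpha$-stable process, that $\cL$ has symbol $\abs{\xi}^\alpha$ (i.e.\ $\cL=-(-\Delta)^{\alpha/2}$), and that the regularity Assumption \ref{ass:regularity} is fulfilled. Your explicit checks (integrability of $\nu_\alpha$ for $\alpha\in(0,2)$, Assumption \ref{ass:standing}, reversibility w.r.t.\ Lebesgue measure, identification of the generator via L\'evy--Khinchine) are exactly the bookkeeping the paper leaves implicit, so the argument is correct and essentially identical.
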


We expect that a similar result should also hold for semigroups
associated to suitable non-homogeneous jump kernels $J$. It would be
desirable to find examples of kernels where the entropy is strictly
geodesically convex. This could be exploited to derive new functional
inequalities and rates of convergence to equilibrium for the
corresponding evolution equation, as has been done in the discrete
setting of finite Markov chains in \cite{EM11}. However, establishing
a stronger EVI($\kappa$) in concrete examples does not seem to be an
easy task and we will address this question in a forthcoming
publication. Moreover, we expect that the approach presented here can
be generalized in order to give a gradient flow interpretation to
evolution equations associated to L\'evy-type operators with both
non-local and diffusion part.

\subsection*{Organization of the paper}
In Section \ref{sec:action} we study the action functional $\cA$ and
establish various properties needed in the sequel. Section
\ref{sec:nonlocal-ce} is devoted to an analysis of the non-local
continuity equation \eqref{eq:ce-intro}. In Section \ref{sec:metric}
we define the metric $\cW$ and prove Theorem
\ref{thm:1-metric}. Finally, we focus on translation invariant jump
kernels and present the proof of Theorem \ref{thm:2-gradflow} in
Section \ref{sec:gradflow}.

 \subsection*{Acknowledgment} {\small The author is grateful to Jan
   Maas and Karl-Theodor Sturm for stimulating discussions on this
   paper and related questions.}

\section{The action functional}\label{sec:action}

In this section we introduce and study an action functional on pairs
of measures. Let us first introduce some notation. We denote by
$\cP(\R^d)$ the space of Borel probability measures on $\R^d$ equipped
with the topology of weak convergence. We let
$G=\{(x,y)\in\R^d\times\R^d\vert x\neq y\}$ and denote by
$\cM_{loc}(G)$ the space of signed Radon measures on the open set $G$
equipped with the weak* topology in duality with continuous functions
with compact support in $G$.

The definition of the action functional and later the metric will
depend on the choice of a function $\theta:\R_+\times\R_+\to\R_+$. We
will always require it to fulfill the following assumptions:
\begin{assumption} \label{ass:theta} The function $\theta$ has the
  following properties:
\begin{itemize}
\item[(A1)] (Regularity): $\theta$ is continuous on $\R_+ \times \R_+$
  and $C^1$ on $(0,\infty) \times (0,\infty)$;
\item[(A2)] (Symmetry): $\theta(s,t) = \theta(t,s)$ for $s, t \geq 0$;
\item[(A3)] (Positivity, normalisation): $\theta(s,t) > 0$ for $s,t >
  0$ and $\theta(1,1)=1$;
\item[(A4)] (Zero at the boundary): $\theta(0,t) = 0$ for all $t \geq
  0$;
\item[(A5)] (Monotonicity): $\theta(r, t) \leq \theta(s,t)$ for all $0
  \leq r \leq s$ and $t \geq 0$;
\item[(A6)] (Positive homogeneity): $\theta(\lambda s, \lambda t) =
  \lambda \theta(s,t)$ for $\lambda > 0$ and $s,t \geq 0$;
\item[(A7)] (Concavity): the function $\theta : \R_+ \times \R_+ \to
  \R_+$ is concave.
\end{itemize}
\end{assumption}
It is easy to check that these assumptions imply
\begin{equation}\label{eq:theta-arithmetic-ineq}
\theta(s,t)~\leq~\frac{s+t}{2}\quad\forall s,t\geq0\ .
\end{equation}

In view of applications to gradient flows of the entropy we will be
mostly interested in a particular choice of $\theta$, namely the
logarithmic mean given by
\begin{align}\label{eq:log-mean}
  \theta(s,t)~=~\int_0^1s^\alpha t^{1-\alpha}\dd \alpha~=~\frac{s-t}{\log s-\log t}\ ,
\end{align}
the latter expression being valid for $s,t>0$. However, for future use
we will allow for more generality in the choice of $\theta$. Given a
function $\rho:\R^d\to\R_+$ we will often write
\begin{align*}
  \hat\rho(x,y)~:=~\theta(\rho(x),\rho(y))\ .
\end{align*}
We can now define a function $\alpha : \R\times\R_+\times\R_+ \to \R_+\cup\{\infty\}$, called the action density function, by setting
\begin{align*}
 \alpha(w,s,t) :=  \left\{ \begin{array}{ll}
  \frac{w^2}{2\theta(s,t)}\;,
  & \theta(s,t) \neq 0\;,\\
0\;,
  &  \theta(s,t) = 0\text{ and } w = 0 \;,\\ 
+ \infty\;,
  & \theta(s,t) = 0 \text{ and } w \neq 0\;.\end{array} \right.
\end{align*}
The following observation will be useful.

\begin{lemma}\label{lem:actiondensity}
  The function $\alpha$ is lower semicontinuous, convex and
  positively homo\-gen\-eous, i.e.
  \begin{align*}
    \alpha(\lambda w,\lambda s,\lambda t)~=~\lambda \alpha(w,s,t)\quad
    \forall w\in\R\;,\ s,t\geq0\;,\ \lambda\geq 0\ .
  \end{align*}
\end{lemma}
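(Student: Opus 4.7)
The natural plan is to exhibit $\alpha$ as a supremum of continuous linear-in-$w$, concave-in-$(s,t)$ functions, which will immediately give both lower semicontinuity and convexity; positive homogeneity then follows directly from assumption (A6).

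Concretely, I would first establish the variational representation
\begin{equation*}
  \alpha(w,s,t)~=~\sup_{a\in\R}\Bigl(aw - \frac{a^2}{2}\theta(s,t)\Bigr)
\end{equation*}
valid on all of $\R\times\R_+\times\R_+$. To verify this, I would split into three cases according to the definition of $\alpha$. When $\theta(s,t)>0$, maximization in $a$ of the concave quadratic yields the optimum $a=w/\theta(s,t)$, giving the value $w^2/(2\theta(s,t))$. When $\theta(s,t)=0$ and $w=0$, every term in the supremum vanishes, giving $0$. When $\theta(s,t)=0$ and $w\neq 0$, the linear map $a\mapsto aw$ is unbounded above, producing $+\infty$. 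All three cases match the definition of $\alpha$.

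From the representation, convexity and lower semicontinuity are immediate. Indeed, for each fixed $a\in\R$, the map
\begin{equation*}
  (w,s,t)~\mapsto~aw - \frac{a^2}{2}\theta(s,t)
\end{equation*}
is continuous on $\R\times\R_+\times\R_+$ by assumption (A1), and it is convex because it is affine in $w$ while $-\theta$ is convex in $(s,t)$ by the concavity assumption (A7), multiplied by the non-negative constant $a^2/2$. A pointwise supremum of a family of continuous convex functions is convex and lower semicontinuous, which yields the claim.

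Finally, positive homogeneity is a direct computation from (A6). For $\lambda>0$, on the set $\{\theta(s,t)>0\}$ one has $\theta(\lambda s,\lambda t)=\lambda\theta(s,t)>0$, so
\begin{equation*}
\alpha(\lambda w,\lambda s,\lambda t)~=~\frac{\lambda^2 w^2}{2\lambda\theta(s,t)}~=~\lambda\,\alpha(w,s,t),
\end{equation*}
and the remaining degenerate cases are handled identically using $\theta(\lambda s,\lambda t)=\lambda\theta(s,t)$. The case $\lambda=0$ reduces to $\alpha(0,0,0)=0$ by the convention, since $\theta(0,0)=0$ by (A4). The only subtlety is handling the boundary cases where $\theta(s,t)=0$ consistently, but this is transparent once the variational formula is recorded.
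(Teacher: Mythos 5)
Your proof is correct. The variational identity $\alpha(w,s,t)=\sup_{a\in\R}\bigl(aw-\tfrac{a^2}{2}\theta(s,t)\bigr)$ is verified correctly in all three cases of the definition, and once it is in place, convexity and lower semicontinuity follow since each function $(w,s,t)\mapsto aw-\tfrac{a^2}{2}\theta(s,t)$ is continuous by (A1) and convex (affine in $w$, and $-\tfrac{a^2}{2}\theta$ convex by (A7)); homogeneity via (A6) is handled case by case, including the degenerate ones. This is a genuinely different route from the paper's: the paper simply invokes the joint convexity of $(x,y)\mapsto x^2/y$ on $\R\times(0,\infty)$ and combines it with (A6)--(A7), i.e.\ it composes the (convex, non-increasing in $y$) perspective-type function with the concave function $\theta$, leaving the boundary set $\{\theta=0\}$ and the semicontinuity there to a routine check. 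Your Legendre-type representation proves that convexity fact rather than citing it, and it has the advantage of treating the boundary values $\theta(s,t)=0$ (including the $+\infty$ case) and the lower semicontinuity in a single stroke, whereas the composition argument needs a separate, if easy, verification at points where $\theta$ vanishes; the price is only the small preliminary computation establishing the supremum formula. The one point worth stating explicitly in your write-up is the convention $0\cdot\infty=0$ (or the restriction to $\lambda>0$ plus the trivial case $\lambda=0$, $w=s=t=0$) when you assert $\alpha(0,0,0)=0=0\cdot\alpha(w,s,t)$ for $\lambda=0$, but this is a cosmetic remark, not a gap.
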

\begin{proof}
  This is easily checked using (A6),(A7) and the convexity of the
  function $(x,y)\mapsto \frac{x^2}{y}$ on $\R \times (0,\infty)$.
\end{proof}

We will now define an action functional on pairs of measures
$(\mu,\bnu)$ where $\mu\in\cP(\R^d)$ and $\bnu\in\cM_{loc}(G)$. To
$\mu$ we associate a two Radon measures in $\cM_{loc}(G)$ by setting:
\begin{align}\label{eq:defmu12}
  \mu^1(\dd x,\dd y)~:=~J(x,\dd y)\mu(\dd x)\;,\ \mu^2(\dd x,\dd
  y)~:=~J(y,\dd x)\mu(\dd y)\ .
\end{align}
We can always choose a measure $\sigma\in\cM_{loc}(G)$ such that
$\mu^i=\rho^i\sigma,\ i=1,2$ and $\bnu=w\sigma$ are all absolutely
continuous with respect to $\sigma$. For example take the sum of the
total variations
 $\sigma:=\abs{\mu^1}+\abs{\mu^2}+\abs{\bnu}$.  We can
then define the \emph{action functional} by
\begin{align*}
\cA(\mu,\bnu)~:=~\int\alpha\big(w,\rho^1,\rho^2\big)\dd\sigma\ .
\end{align*}
Note that this definition is independent of the choice of $\sigma$
since $\a$ is positively homogeneous. Hence we can also write the
action functional as
\begin{align*}
  \cA(\mu,\bnu)~=~\int\alpha\left(\frac{\dd\lambda_1}{\dd\abs{\lambda}},\frac{\dd\lambda_2}{\dd\abs{\lambda}},\frac{\dd\lambda_3}{\dd\abs{\lambda}}\right)\dd\abs{\lambda}\ ,
\end{align*}
where $\lambda$ is the vector valued measure given by
$\lambda=(\bnu,\mu^1,\mu^2)$. 

In the case where the measure $\mu$ is absolutely continuous
w.r.t. $m$ the next lemma shows that the action takes a more intuitive
form. For this we denote by $Jm\in\cM_{loc}(G)$ the measure given by
$Jm(\dd x,\dd y)=J(x,\dd y)m(\dd x)$.

\begin{lemma}\label{lem:densities}
  Let $\mu\in\cP(\R^d)$ be absolutely continuous w.r.t. $m$ with
  density $\rho$. Further let $\bnu\in\cM_{loc}(G)$ such that
  $\cA(\mu,\bnu)<\infty$. Then there exist a function $w:G\to\R$ such
  that $\bnu=w\hat\rho Jm$ and we have
  \begin{align}\label{eq:densities}
    \cA(\mu,\bnu)~=~\frac12\int\abs{w(x,y)}^2\hat\rho(x,y)J(x,\dd y)m(\dd x)\ .
  \end{align}
\end{lemma}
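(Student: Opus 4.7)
The plan is to show first that the finiteness of $\cA(\mu,\bnu)$ forces $\bnu$ to be absolutely continuous with respect to $Jm$, and then to use the positive homogeneity of $\alpha$ together with reversibility of $J$ to rewrite the action in the announced form.

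First, I would note that by reversibility of $J$ with respect to $m$ the measure $Jm(\dd x,\dd y)=J(x,\dd y)m(\dd x)$ is symmetric under swapping coordinates, so $J(y,\dd x)m(\dd y)=Jm$ as well. Since $\mu=\rho m$, the two auxiliary measures associated to $\mu$ in \eqref{eq:defmu12} take the simple form
\begin{align*}
  \mu^1(\dd x,\dd y)~=~\rho(x)\,Jm(\dd x,\dd y)\;,\qquad \mu^2(\dd x,\dd y)~=~\rho(y)\,Jm(\dd x,\dd y)\;.
\end{align*}

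Next I would take as reference measure $\sigma:=Jm+\abs{\bnu}$ and write $Jm=\phi\sigma$ and $\bnu=u\sigma$ with Borel densities $\phi\geq 0$ and $u$. Then $\mu^1=\rho(x)\phi\,\sigma$ and $\mu^2=\rho(y)\phi\,\sigma$, i.e.\ $\rho^1=\rho(x)\phi$ and $\rho^2=\rho(y)\phi$ in the notation of the definition of $\cA$. On the set $N:=\{\phi=0\}$ we have $\rho^1=\rho^2=0$ and hence $\theta(\rho^1,\rho^2)=0$ by (A4), so by the very definition of $\alpha$ the integrand $\alpha(u,\rho^1,\rho^2)$ equals $+\infty$ on $N\cap\{u\neq 0\}$. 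Finiteness of $\cA(\mu,\bnu)=\int\alpha(u,\rho^1,\rho^2)\dd\sigma$ therefore forces $u=0$ $\sigma$-a.e.\ on $N$. Consequently $\bnu$ charges no set disjoint from the support of $Jm$, and the Radon--Nikodym theorem provides a Borel function $\tilde w:G\to\R$ with $\bnu=\tilde w\,Jm$.

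Now I would use $\sigma':=Jm$ as reference measure in the definition of $\cA$, which is permitted because $\alpha$ is positively homogeneous (Lemma \ref{lem:actiondensity}). With this choice $\rho^1=\rho(x)$, $\rho^2=\rho(y)$ and $\bnu=\tilde w\,\sigma'$, so
\begin{align*}
  \cA(\mu,\bnu)~=~\int\alpha\bigl(\tilde w(x,y),\rho(x),\rho(y)\bigr)\,J(x,\dd y)m(\dd x)\;.
\end{align*}
By assumptions (A3)--(A4), $\hat\rho(x,y)=\theta(\rho(x),\rho(y))=0$ iff $\rho(x)=0$ or $\rho(y)=0$; on this set the integrand is $+\infty$ unless $\tilde w=0$, so finiteness of $\cA$ forces $\tilde w=0$ $Jm$-a.e.\ on $\{\hat\rho=0\}$. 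Setting $w(x,y):=\tilde w(x,y)/\hat\rho(x,y)$ where $\hat\rho>0$ and $w:=0$ elsewhere yields $\bnu=w\hat\rho\,Jm$, and plugging this back into the displayed formula produces exactly \eqref{eq:densities}.

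The only real delicacy is handling the two degeneracy sets $\{\phi=0\}$ and $\{\hat\rho=0\}$; both are dealt with by the same observation that the $+\infty$ branch of $\alpha$ cannot be charged under the finiteness assumption. Apart from that, the argument is a direct book-keeping exercise using reversibility and positive homogeneity.
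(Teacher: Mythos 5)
Your proof is correct and takes essentially the same approach as the paper: both arguments rest on the observation that finiteness of $\cA(\mu,\bnu)$ prevents the $+\infty$ branch of the action density $\alpha$ from being charged, which yields $\bnu\ll\hat\rho\,Jm$, after which \eqref{eq:densities} follows from positive homogeneity. The only difference is organizational — you establish $\bnu\ll Jm$ first (via the reference measure $Jm+\abs{\bnu}$) and then handle the set $\{\hat\rho=0\}$, whereas the paper gets absolute continuity w.r.t. $\hat\rho\,Jm$ in a single step using the homogeneity of $\theta$ — and this changes nothing of substance.
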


\begin{proof}
  Choose $\lambda\in\cM_{loc}(G)$ such that $Jm=h\lambda$ and
  $\bnu=\tilde w\lambda$ are both absolutely continuous
  w.r.t. $\lambda$. Note that $\mu^i=\rho^iJm,\ i=1,2$ with
  $\rho^1(x,y)=\rho(x)$ and $\rho^2(x,y)=\rho(y)$. Further, we denote
  by $\tilde\rho^i$ the density of $\mu^i$ w.r.t $\lambda$. Now by
  definition,
  \begin{align}\label{eq:densities1} 
   \cA(\mu,\bnu)~=~\int\a\big(\tilde w,\tilde\rho^1,\tilde\rho^2\big)\dd\lambda~<~\infty\ .
  \end{align}
  Let $A\subset G$ such that $\int_A\theta(\rho^1,\rho^2)\dd Jm=0$. From the
  homogeneity of $\theta$ we conclude
  \begin{align*}
   0~=~\int_A\theta(\rho^1,\rho^2)\dd Jm~=~\int_A\theta(\tilde\rho^1,\tilde\rho^2)\dd\lambda\ ,
  \end{align*}
  i.e. $\theta(\tilde\rho^1,\tilde\rho^2)=0$ $\lambda$-a.e. on
  $A$. Now the finiteness of the integral in \eqref{eq:densities1}
  implies that $\tilde w=0$ $\lambda$-a.e. on $A$. In other words
  $\bnu(A)=0$ and hence $\bnu$ is absolutely continuous w.r.t. the
  measure $\hat\rho Jm$. Formula \eqref{eq:densities} now follows
  immediately from the homogeneity of $\a$.
\end{proof}

\begin{lemma}[Lower semicontinuity of the action]\label{lem:lscaction}
  $\cA$ is lower semicontinuous w.r.t. weak convergence of
  measures. More precisely, assume that $\mu_n\rightharpoonup\mu$
  weakly in $\cP(\R^d)$ and $\bnu_n\rightharpoonup^*\bnu$ weakly* in
  $\cM_{loc}(G)$. Then
  \begin{align*}
    \cA(\mu,\bnu)~\leq~\liminf\limits_{n}\cA(\mu_n,\bnu_n)\ .
  \end{align*}
\end{lemma}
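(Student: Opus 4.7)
The plan is to write $\cA$ as a pointwise supremum of linear functionals that are each continuous under the prescribed convergence, exploiting the fact that $\alpha$ is convex, lower semicontinuous, and positively $1$-homogeneous (Lemma \ref{lem:actiondensity}). The right object to dualize is the $\R^3$-valued Radon measure $\lambda := (\bnu, \mu^1, \mu^2) \in \cM_{loc}(G;\R^3)$, since by $1$-homogeneity the action can be rewritten as $\cA(\mu, \bnu) = \int \alpha(\dd \lambda / \dd |\lambda|) \, \dd |\lambda|$.

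First I would compute the dual representation of $\alpha$. An elementary Legendre computation (maximising $aw - w^2/(2\theta(s,t))$ in $w$ for fixed $s,t$ with $\theta(s,t)>0$, and using lower semicontinuity to handle the boundary) yields
\begin{align*}
\alpha(w,s,t) = \sup\bigl\{ aw + bs + ct \ :\ (a,b,c) \in B \bigr\},
\end{align*}
where
\begin{align*}
B = \Bigl\{ (a,b,c)\in\R^3\ :\ \tfrac{a^2}{2}\theta(s,t) + bs + ct \le 0 \text{ for all } s,t\ge 0 \Bigr\}
\end{align*}
is a closed convex set. From this, the standard integral representation for convex $1$-homogeneous functionals of vector-valued Radon measures (of Bouchitté--Buttazzo / Reshetnyak type) gives
\begin{align*}
\cA(\mu,\bnu) = \sup\left\{ \int_G a\,\dd\bnu + \int_G b\,\dd\mu^1 + \int_G c\,\dd\mu^2\ :\ (a,b,c)\in C_c(G;B) \right\}.
\end{align*}

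Next I would verify that the maps $\mu \mapsto \mu^i$ are continuous from $(\cP(\R^d),\text{weak})$ to $(\cM_{loc}(G),\text{weak}^*)$. Fix $h\in C_c(G)$; there is $\eps > 0$ with $\supp h \subset \{|x-y|\ge\eps\}$, so $g(x,y) := h(x,y)/(1\wedge |x-y|^2)$ is bounded and continuous with compact support in $G$. Approximating $g$ uniformly on its support by finite sums of tensors $\phi(x)\psi(y)$ and invoking Assumption \ref{ass:standing}, one obtains that
\begin{align*}
x \mapsto \bar h(x) := \int h(x,y)\, J(x,\dd y) = \int g(x,y)(1\wedge|x-y|^2)\, J(x,\dd y)
\end{align*}
is bounded and continuous on $\R^d$. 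Hence $\int h\,\dd\mu^1_n = \int \bar h\,\dd\mu_n \to \int \bar h\,\dd\mu = \int h\,\dd\mu^1$, and analogously for $\mu^2$.

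Combining these two ingredients concludes the proof: for each fixed $(a,b,c)\in C_c(G;B)$ the functional $(\mu,\bnu)\mapsto \int a\,\dd\bnu + \int b\,\dd\mu^1 + \int c\,\dd\mu^2$ is continuous along the given convergence (the first integral by weak* convergence of $\bnu_n$, the other two by the previous step), and the supremum over such $(a,b,c)$ is therefore lower semicontinuous. The expected main obstacle is the third step: Assumption \ref{ass:standing} is only formulated for integrands depending on $y$, so one must upgrade it to joint integrands $h(x,y)\in C_c(G)$ via the factorisation through $(1\wedge|x-y|^2)$ and a density argument. The convex-analytic identification of $B$ and the dual formula for $\cA$ are then routine once convexity, lsc, and $1$-homogeneity of $\alpha$ are at hand.
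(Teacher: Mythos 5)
Your proposal is correct and follows essentially the paper's own route: the paper likewise reduces the claim to weak* convergence of the triple $(\bnu_n,\mu^1_n,\mu^2_n)$ together with lower semicontinuity of the convex, positively $1$-homogeneous integral functional, the latter being quoted as Proposition \ref{prop:lsc} (Buttazzo), whose standard proof is precisely the dual/support-function representation you sketch, so you are unpacking the same black box rather than taking a different path. Your additional care in upgrading Assumption \ref{ass:standing} from integrands $f(y)$ to joint integrands $h\in C_c(G)$ via the factorisation through $1\wedge\abs{x-y}^2$ and tensor-product density correctly fills a step the paper leaves implicit.
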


\begin{proof}
  Note that by Assumption \ref{ass:standing} the weak convergence of
  $\mu_n$ to $\mu$ implies the weak* convergence of $\mu^i_n$ to
  $\mu^i$ in $\cM_+(G)$ for $i=1,2$. Now the claim follows immediately
  from a general result on integral functionals, Proposition
  \ref{prop:lsc}.
\end{proof}

\begin{proposition}[{\cite[Thm. 3.4.3]{Bu89}}]\label{prop:lsc}
  Let $\Omega$ be a locally compact Polish space and let
  $f:\Omega\times\R^n\to[0,+\infty]$ be a lower semicontinuous
  function such that $f(\omega,\cdot)$ is convex and positively
  $1$-homogeneous for every $\omega\in\Omega$. Then the functional
  \begin{align*}
    F(\lambda)~=~\int\limits_\Omega
    f\left(\omega,\frac{\dd\lambda}{\dd\abs{\lambda}}(\omega) \right)\abs{\lambda}(\dd \omega)
  \end{align*}
  is sequentially weak* lower semicontinuous on the space of vector
  valued signed Radon measures $\cM_{loc}(\Omega,\R^n)$.
\end{proposition}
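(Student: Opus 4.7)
The plan is to prove the proposition via a duality representation of $F$ as a supremum of weak$^*$-lower semicontinuous linear functionals. Since $f(\omega,\cdot)$ is convex, positively $1$-homogeneous, and lower semicontinuous for each fixed $\omega$, its Fenchel conjugate equals the indicator of the closed convex set
\[
K(\omega) := \{ a \in \R^n : a \cdot v \leq f(\omega,v) \text{ for all } v \in \R^n\},
\]
and the pointwise biduality $f(\omega,v) = \sup_{a \in K(\omega)} a \cdot v$ holds. The joint lower semicontinuity of $f$ moreover translates into closedness of the graph $\{(\omega,a) : a\in K(\omega)\}$ in $\Omega\times\R^n$.

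The first step, which I expect to be the main technical obstacle, is to produce a countable family $(\phi_k)_{k\in\N} \subset C_c(\Omega;\R^n)$ of continuous, compactly supported selections of $K$, meaning $\phi_k(\omega)\in K(\omega)$ for every $\omega$, such that
\[
f(\omega,v) = \sup_{k\in\N} \phi_k(\omega) \cdot v \qquad \forall (\omega,v)\in\Omega\times\R^n.
\]
Such a family can be constructed by combining an exhaustion of $\Omega$ by an increasing sequence of compacts (using local compactness), a countable dense set of test directions in $\R^n$, and continuous selection arguments (e.g.\ Michael's selection theorem) applied to the closed-graph multifunction $K$. The delicate point is to pack enough selections to recover the supremum pointwise while simultaneously preserving continuity and compact support; this typically proceeds by selecting, near each $(\omega,v_j)$, an element $a\in K(\omega)$ within $\eps$ of realizing $f(\omega,v_j)$, extending it continuously and locally, and then patching via a partition of unity.

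Once the family is in hand, I would use the polar decomposition $\lambda = v_\lambda\,|\lambda|$ with $v_\lambda = \dd\lambda/\dd|\lambda|$, together with a measurable partition argument, to obtain the representation
\[
F(\lambda) = \sup\left\{ \sum_{j=1}^N \int_{A_j} \phi_{k_j}(\omega)\cdot \dd\lambda(\omega)\, :\, N\in\N,\ k_j\in\N,\ (A_j)\text{ disjoint open in }\Omega\right\}.
\]
For each fixed choice of $(N, k_j, A_j)$ the functional $\lambda\mapsto \sum_j \int_{A_j} \phi_{k_j}\cdot \dd\lambda$ is weak$^*$ lower semicontinuous on $\cM_{loc}(\Omega,\R^n)$: the $\phi_{k_j}$ lie in $C_c$, and integration against indicators of open sets is weak$^*$ lsc by the portmanteau theorem (or, if one prefers strict continuity, by monotonically approximating each $\one_{A_j}$ from below by continuous cutoffs with compact support in $A_j$ and exploiting nonnegativity in the limit). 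Since a pointwise supremum of lower semicontinuous functions is lower semicontinuous, the claimed weak$^*$ lower semicontinuity of $F$ follows at once.
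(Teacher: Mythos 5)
You are attempting to prove a statement that the paper itself does not prove but only quotes from \cite{Bu89}, and your overall strategy (biduality $f(\omega,v)=\sup_{a\in K(\omega)}a\cdot v$, a countable family of continuous compactly supported selections of $K$, and a representation of $F$ as a supremum of elementary functionals) is indeed the classical route used in such references. However, two of your steps are wrong as written. First, joint lower semicontinuity of $f$ does \emph{not} give closedness of the graph of $K$: take $n=1$, $f(\omega,v)=\abs{v}$ for $\omega\neq 0$ and $f(0,v)=0$; this is jointly lsc, yet $K(\omega)=[-1,1]$ for $\omega\neq 0$ while $K(0)=\{0\}$, so $(1/j,1)\to(0,1)$ leaves the graph. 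Moreover, Michael's selection theorem requires lower (inner) semicontinuity of the multifunction, not a closed graph. What is true, and what your selection/partition-of-unity construction actually needs, is that joint lsc of the support function $f$ implies inner semicontinuity of $\omega\mapsto K(\omega)$: if some $a_0\in K(\omega_0)$ stayed at distance $\geq\delta$ from $K(\omega_j)$, separating by unit vectors $v_j$ and passing to a limit $v_*$ contradicts $\liminf_j f(\omega_j,v_j)\geq f(\omega_0,v_*)\geq a_0\cdot v_*$. So this part is repairable, but the justification you give is not the right one.

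The step that genuinely fails is the final one: the functionals $\lambda\mapsto\int_{A_j}\phi_{k_j}\cdot\dd\lambda$ are \emph{not} weak* lower semicontinuous, because the integrand $\phi_{k_j}\cdot\frac{\dd\lambda}{\dd\abs{\lambda}}$ is signed; the portmanteau theorem applies to nonnegative measures only. Concretely, with $\lambda_m=e\,\delta_{x_m}$, $x_m\in A_j$, $x_m\to x\in\partial A_j$ and $\phi_{k_j}(x)\cdot e<0$, one has $\int_{A_j}\phi_{k_j}\cdot\dd\lambda_m\to\phi_{k_j}(x)\cdot e<0$, while the limit measure $e\,\delta_x$ gives $0$. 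Your fallback (monotone approximation of $\one_{A_j}$ by cutoffs, ``exploiting nonnegativity'') fails for the same reason: only the supremum $f=\sup_k\phi_k\cdot v$ is nonnegative, not the individual integrands, so the approximation is not monotone from below and only yields a limit, not a supremum, of continuous functionals. The standard repair is to keep continuous cutoffs \emph{inside} the supremum: since $f\geq 0$ gives $0\in K(\omega)$ and $K(\omega)$ is convex, $\chi\,\phi_k$ is again an admissible selection for any $\chi\in C_c(A_j)$ with $0\leq\chi\leq 1$; one then proves the representation $F(\lambda)=\sup\big\{\sum_j\int\chi_j\,\phi_{k_j}\cdot\dd\lambda\big\}$ over finitely many cutoffs with pairwise disjoint supports (this is where the localization argument and inner regularity of Radon measures do real work, and it is asserted rather than proved in your sketch), and each term is weak*-continuous, so $F$ is lower semicontinuous as a supremum of continuous functionals. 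As it stands, your proof establishes lower semicontinuity of $F$ from a family of functionals that are not themselves lower semicontinuous, which is a genuine gap.
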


The next estimate will be crucial for establishing compactness of
families of curves with bounded action in Section
\ref{sec:nonlocal-ce}.

\begin{lemma}\label{lem:integrability}
  \begin{itemize}
    \item[i)] 
    There exists a constant $C>0$ such that for all $\mu\in\cP(\R^d)$
    and $\bnu\in\cM_{loc}(G)$ we have:
    \begin{align*}
      \int\limits_{G}\big(1\wedge\abs{x-y}\big)\abs{\bnu}(\dd x,\dd y)~\leq~C\sqrt{\cA(\mu,\bnu)}\;.
    \end{align*}
  \item[ii)] For each compact set $K\subset G$ there exists a
    constant $C(K)>0$ such that for all $\mu\in\cP(\R^d)$ and
    $\bnu\in\cM_{loc}(G)$ we have:
    \begin{align*}
      \abs{\bnu}(K)~\leq~C(K)\sqrt{\cA(\mu,\bnu)}\;.
    \end{align*}
  \end{itemize}
\end{lemma}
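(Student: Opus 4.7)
The plan is to reduce both statements to a single Cauchy--Schwarz estimate against the reference measure $\sigma$ from the definition of $\cA$, followed by the bound $\theta(s,t)\leq (s+t)/2$ from \eqref{eq:theta-arithmetic-ineq}, and then an appeal to Assumption \ref{ass:standing} to control $\mu^1+\mu^2$.

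First, assume $\cA(\mu,\bnu)<\infty$ (otherwise there is nothing to prove), choose $\sigma\in\cM_{loc}(G)$ with $\mu^i=\rho^i\sigma$ and $\bnu=w\sigma$, and observe that finiteness of $\cA$ forces $w=0$ $\sigma$-a.e.\ on $\{\theta(\rho^1,\rho^2)=0\}$. For any Borel $f:G\to\R_+$ I would write
\begin{align*}
\int f\,d|\bnu|~=~\int_{\{\theta(\rho^1,\rho^2)>0\}} \bigl(f\sqrt{2\theta(\rho^1,\rho^2)}\bigr)\cdot\frac{|w|}{\sqrt{2\theta(\rho^1,\rho^2)}}\,d\sigma
\end{align*}
and apply Cauchy--Schwarz to get
\begin{align*}
\int f\,d|\bnu|~\leq~\sqrt{\cA(\mu,\bnu)}\cdot\sqrt{2\int f^2\,\theta(\rho^1,\rho^2)\,d\sigma}~\leq~\sqrt{\cA(\mu,\bnu)}\cdot\sqrt{\int f^2\,d(\mu^1+\mu^2)}\,,
\end{align*}
where the second inequality uses \eqref{eq:theta-arithmetic-ineq}. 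This is the master estimate from which both (i) and (ii) will follow by making appropriate choices of $f$.

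For part (i), I would take $f(x,y)=1\wedge|x-y|$ so that $f^2(x,y)\leq 1\wedge|x-y|^2$, and reduce the claim to showing that $\int(1\wedge|x-y|^2)\,d(\mu^1+\mu^2)$ is uniformly bounded in $\mu\in\cP(\R^d)$. Unpacking the definition \eqref{eq:defmu12},
\begin{align*}
\int(1\wedge|x-y|^2)\,d\mu^1~=~\int_{\R^d}\Bigl[\int(1\wedge|x-y|^2)\,J(x,dy)\Bigr]\mu(dx)\,,
\end{align*}
and Assumption \ref{ass:standing} applied with the constant function $1$ asserts precisely that the bracketed function is bounded; the analogous bound for $\mu^2$ follows by renaming the integration variables. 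Since $\mu(\R^d)=1$, this yields (i) with a constant depending only on $J$.

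For part (ii), given a compact $K\subset G$, let $\delta:=\inf_{(x,y)\in K}|x-y|>0$ (positive by compactness of $K$ and disjointness from the diagonal). Then $\one_K(x,y)\leq(1\wedge\delta^2)^{-1}(1\wedge|x-y|^2)$ pointwise on $G$, and plugging $f=\one_K$ into the master estimate and using the bound from (i) produces $|\bnu|(K)\leq C(K)\sqrt{\cA(\mu,\bnu)}$ with $C(K)$ depending on $K$ only through $\delta$.

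The only mild subtlety is handling the set $\{\theta(\rho^1,\rho^2)=0\}$ inside the Cauchy--Schwarz step; this is dealt with at the outset by invoking the definition of $\alpha$, which forces $w$ to vanish there whenever the action is finite. Everything else is a routine chain of comparisons.
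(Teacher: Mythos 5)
Your argument is correct and follows essentially the same route as the paper's proof: Cauchy--Schwarz against a common reference measure, the bound $\theta(s,t)\leq\tfrac{s+t}{2}$ from \eqref{eq:theta-arithmetic-ineq}, and Assumption \ref{ass:standing} (with $f\equiv 1$) to bound $\int(1\wedge|x-y|^2)\,\dd(\mu^1+\mu^2)$ uniformly, with part (ii) reduced to (i) via the distance of $K$ from the diagonal. Packaging the estimate as a single ``master inequality'' for general $f$ is only a cosmetic difference from the paper, which does (i) directly and notes (ii) is analogous.
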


\begin{proof}
  To prove i) let us define the measure
  $\lambda=\abs{\mu^1}+\abs{\mu^2}+\abs{\bnu}$ and write
  $\mu^i=\rho^i\lambda,\ \bnu=w\lambda$. We can assume
  that $\cA(\mu,\bnu)<\infty$ as otherwise there is nothing to
  prove. This implies that the set $A=\{(x,y)\ \vert\
  \a(w,\rho^1,\rho^2)=\infty\}$ has zero measure with respect to
  $\lambda$. We can now estimate:
\begin{equation*}
\begin{split}
  &\qquad\int\limits_{G}\big(1\wedge\abs{x-y}\big)\abs{\bnu}(\dd x,\dd y)\\~&\leq~\int\limits_{G}\big(1\wedge\abs{x-y}\big)\abs{w}\dd\lambda\\
  &=~\int\limits_{A^c}\big(1\wedge\abs{x-y}\big)\sqrt{2\theta(\rho^1,\rho^2)}\sqrt{\a(w,\rho^1,\rho^2)}\dd\lambda\\
  &\leq~\left(\int\limits_{G}\big(1\wedge\abs{x-y}^2\big)2\theta(\rho^1,\rho^2)\dd\lambda\right)^{\frac{1}{2}}\left(\int\limits_{G}\a(w,\rho^1,\rho^2)\dd\lambda\right)^{\frac{1}{2}}\\
  &\leq~C\sqrt{\cA(\mu,\bnu)}\ .
 \end{split}  
\end{equation*}
The last inequality follows, since by the estimate
\eqref{eq:theta-arithmetic-ineq} and Assumption
\ref{ass:standing} we have :
\begin{align*}
  \int\limits_{G}\big(1\wedge\abs{x-y}^2\big)\theta(\rho^1,\rho^2)\dd\lambda~&\leq~\int\limits_{G}\big(1\wedge\abs{x-y}^2\big)\frac12(\rho^1+\rho^2)\dd\lambda\\
  &=~\int\limits_{G}\big(1\wedge\abs{x-y}^2\big)J(x,\dd y)\mu(\dd x)\\
  &\leq~\sup\limits_x\int(1\wedge\abs{x-y}^2)J(x,\dd y)~<~\infty\ .
\end{align*}
To prove ii) we note that by a similar argument
\begin{align*}
  \abs{\bnu}(K)~\leq~\left(\int\limits_K2J(x,\dd y)\mu(\dd x)\right)^\frac12 \sqrt{\cA(\mu,\bnu)}\ .
\end{align*}
\end{proof}

\begin{lemma}[Convexity of the action]\label{lem:convexityPhi}
  Let $\mu^j\in\cP(\R^d)$ and $\bnu^j\in\cM_{loc}(G)$ for $j=0,1$. For
  $\tau\in[0,1]$ set $\mu^\tau=\tau\mu^1+(1-\tau)\mu^0$ and
  $\bnu^\tau=\tau\bnu^1+(1-\tau)\bnu^0$. Then we have :
$$\cA(\mu^\tau,\bnu^\tau)~\leq~\tau\cA(\mu^1,\bnu^1)+(1-\tau)\cA(\mu^0,\bnu^0)\ .$$
\end{lemma}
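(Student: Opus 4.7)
The strategy is straightforward: exploit Lemma \ref{lem:actiondensity}, which gives joint convexity of the action density $\alpha$, together with the fact that the maps $\mu \mapsto \mu^1$, $\mu \mapsto \mu^2$ (as defined in \eqref{eq:defmu12}) and $\bnu \mapsto \bnu$ are all linear, so that convex combinations pass through to the densities against a common reference measure.

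The plan is as follows. First I would pick a single reference measure $\sigma \in \cM_{loc}(G)$ with respect to which all six measures $(\mu^0)^1, (\mu^0)^2, (\mu^1)^1, (\mu^1)^2, \bnu^0, \bnu^1$ are absolutely continuous. A convenient choice is
\begin{align*}
\sigma ~:=~ \abs{(\mu^0)^1} + \abs{(\mu^0)^2} + \abs{(\mu^1)^1} + \abs{(\mu^1)^2} + \abs{\bnu^0} + \abs{\bnu^1}\;.
\end{align*}
Writing $(\mu^j)^i = \rho^{j,i}\,\sigma$ and $\bnu^j = w^j\,\sigma$ for $i,j \in \{0,1\}$, the independence of the definition of $\cA$ from the choice of reference measure (noted after the definition of $\cA$) gives
\begin{align*}
\cA(\mu^j,\bnu^j) ~=~ \int \alpha\bigl(w^j, \rho^{j,1}, \rho^{j,2}\bigr)\,\dd\sigma\;, \qquad j=0,1\;.
\end{align*}

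Next, since $\mu \mapsto \mu^i$ is linear in $\mu$ (the measure $J(x,\dd y)$ does not depend on $\mu$), we have $(\mu^\tau)^i = \tau (\mu^1)^i + (1-\tau)(\mu^0)^i$, so the densities with respect to $\sigma$ of $(\mu^\tau)^i$ and of $\bnu^\tau$ are respectively $\tau \rho^{1,i} + (1-\tau)\rho^{0,i}$ and $\tau w^1 + (1-\tau) w^0$. Using again the representation independent of the reference measure,
\begin{align*}
\cA(\mu^\tau,\bnu^\tau) ~=~ \int \alpha\bigl(\tau w^1 + (1-\tau)w^0,\, \tau\rho^{1,1} + (1-\tau)\rho^{0,1},\, \tau\rho^{1,2} + (1-\tau)\rho^{0,2}\bigr)\,\dd\sigma\;.
\end{align*}

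Finally, applying the convexity of $\alpha$ from Lemma \ref{lem:actiondensity} pointwise to the integrand and integrating against $\sigma$ yields the desired inequality. There is essentially no obstacle here beyond bookkeeping: the only point requiring a little care is the use of a common $\sigma$, which ensures that the same convex combination of densities represents the convex combination of the measures and that Lemma \ref{lem:actiondensity} can be invoked pointwise.
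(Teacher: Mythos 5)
Your proof is correct and follows essentially the same route as the paper: fix a single reference measure dominating all six measures, observe that linearity of $\mu\mapsto\mu^i$ and $\bnu\mapsto\bnu$ turns the convex combination of measures into the convex combination of densities, and apply the pointwise convexity of $\alpha$ from Lemma \ref{lem:actiondensity} under the integral. No gaps to report.
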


\begin{proof}
  Let us fix a reference measure $\lambda\in\cM_{loc}(G)$ such that
  $\mu^{j,i},\bnu^j$ for $j=0,1$ and $i=1,2$ are all absolutely
  continuous w.r.t. $\lambda$ and write $\mu^{j,i}=\rho^{j,i}\lambda$
  and $\bnu^j=w^j\lambda$. Note that
  $\mu^{\tau,i}=\rho^{\tau,i}\lambda$ with
  $\rho^{\tau,i}=\tau\rho^{1,i}+(1-\tau)\rho^{0,i}$ and
  $\bnu^\tau=w^\tau\lambda$ with $w^\tau=\tau w^1+(1-\tau)w^0$. From
  the convexity of the action density function $\a$ we obtain :
\begin{equation*}
 \begin{split}
   \cA(\mu^\tau,\bnu^\tau)~&=~\int\a(w^\tau,\rho^{\tau,1},\rho^{\tau,2})\dd\lambda\\
   &\leq~\tau \int\a(w^1,\rho^{1,1},\rho^{1,2})\dd\lambda+(1-\tau)\int\a(w^0,\rho^{0,1},\rho^{0,2})\dd\lambda\\
   &=~\tau \cA(\mu^1,\bnu^1)+ (1-\tau)\cA(\mu^0,\bnu^0)\ .
 \end{split}
\end{equation*}
\end{proof}

We will now show that the action functional enjoys a monotonicity
property under convolution if we assume that the jump kernel is
translation invariant in the sense that
\begin{align}\label{eq:translationinv-J}
  J(x-z,A-z)~=~J(x,A)\qquad \forall x,z\in\R^d, A\in\cB(\R^d)\ .
\end{align}
For the rest of this section we also assume that $m$ is Lebesgue
measure. We first need to fix a way of convoluting measure on $\R^d$
and on $G$ in a consistent manner. Let $k$ be a convolution kernel,
i.e. $k:\R^d\to\R_+$ satisfying $\int k(z)\dd z=1$. Given a measure
$\mu\in\cP(\R^d)$, its convolution is defined as usual by
\begin{align*}
  (\mu*k)(A)~:=~\int k(z)\mu(A-z)\dd z\qquad \forall A\in\cB(\R^d)\ .
\end{align*}
On the other hand given a measure $\bnu\in\cM_{loc}(G)$ we define
$\bnu*k\in\cM_{loc}(G)$ by setting for all Borel measurable sets
$B\subset G$
\begin{align}\label{eq:convolutiondiagonal}
  (\bnu*k)(B)~:=~\int k(z)\bnu(B-\binom{z}{z})\dd z\ .
\end{align}
Note that this implies in particular that for every bounded function
$f:G\to \R$ with compact support in $G$ we have:
\begin{align*}
  \int f(x,y) (\bnu*k)(\dd x,\dd y)~=~\int\int k(z)f(x+z,y+z)\bnu(\dd x,\
dd y)\dd z\ .
\end{align*}
We now have the following monotonicity property under convolution.
\begin{proposition}\label{prop:monotonconvolution}
  Assume that $J$ satisfies \eqref{eq:translationinv-J} and let $k$
  be a convolution kernel. Then for every
  $\mu\in\cP(\R^d),\bnu\in\cM_{loc}(G)$ we have
  \begin{align}\label{eq:monotonconvolution}
    \cA( \mu*k,\bnu*k )~\leq~\cA( \mu,\bnu )\ .
  \end{align}
\end{proposition}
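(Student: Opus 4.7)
The plan is to write convolution as an averaging of translates of the pair $(\mu,\bnu)$, check that $\cA$ is invariant under simultaneous translation when $J$ is translation invariant, and then invoke a Jensen-type inequality coming from the convexity and $1$-homogeneity of $\a$.

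For $z\in\R^d$ let $T_z\mu(A)=\mu(A-z)$ and let $\tilde T_z\bnu(B)=\bnu\big(B-(z,z)\big)$ denote the diagonal translation on $\cM_{loc}(G)$. A short calculation using \eqref{eq:translationinv-J} gives $(T_z\mu)^i=\tilde T_z(\mu^i)$ for $i=1,2$. Writing the convolutions as integral averages, $\mu*k=\int k(z)T_z\mu\,\dd z$ and $\bnu*k=\int k(z)\tilde T_z\bnu\,\dd z$ (the latter being just a reformulation of \eqref{eq:convolutiondiagonal}), this yields the consistency $(\mu*k)^i=\mu^i*k$. Moreover $\cA(T_z\mu,\tilde T_z\bnu)=\cA(\mu,\bnu)$ for every $z$: if $\sigma$ is any reference measure for $(\mu,\bnu)$ then $\tilde T_z\sigma$ is a reference measure for the translated pair whose Radon--Nikodym densities are just the original densities precomposed with $T_{-z}$, and a change of variables in the integral defining $\cA$ leaves the value unchanged.

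Given these preliminary facts, \eqref{eq:monotonconvolution} reduces to the Jensen-type estimate
\begin{equation*}
\cA\!\left(\int k(z)T_z\mu\,\dd z,\int k(z)\tilde T_z\bnu\,\dd z\right)~\leq~\int k(z)\,\cA(T_z\mu,\tilde T_z\bnu)\,\dd z,
\end{equation*}
whose right hand side equals $\cA(\mu,\bnu)\int k(z)\,\dd z=\cA(\mu,\bnu)$. To prove this estimate -- the only nontrivial step -- I would discretize $k$ by finite convex combinations of Dirac masses $k_n=\sum_{i=1}^{N_n}\lambda_i^n\delta_{z_i^n}$, obtained by a mass-preserving partitioning of $k\,\dd z$ on a fine grid restricted to a large ball, arranged so that $\mu*k_n\rightharpoonup\mu*k$ in $\cP(\R^d)$ and $\bnu*k_n\rightharpoonup^*\bnu*k$ in $\cM_{loc}(G)$. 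An iteration of the two-point convexity from Lemma \ref{lem:convexityPhi} gives
\begin{equation*}
\cA(\mu*k_n,\bnu*k_n)~\leq~\sum_{i=1}^{N_n}\lambda_i^n\,\cA(T_{z_i^n}\mu,\tilde T_{z_i^n}\bnu)~=~\cA(\mu,\bnu),
\end{equation*}
and the lower semicontinuity of $\cA$ from Lemma \ref{lem:lscaction} allows one to pass to the limit $n\to\infty$ to conclude.

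The main obstacle lies precisely in arranging the discretization so that the resulting convolutions converge in the topology for which Lemma \ref{lem:lscaction} applies: the weak convergence of $\mu*k_n$ to $\mu*k$ in $\cP(\R^d)$ is routine by tightness (after restricting the support of $k_n$ to a ball of radius $R$ and sending $R\to\infty$ at the end), while for $\bnu*k_n\rightharpoonup^*\bnu*k$ in $\cM_{loc}(G)$ one tests against a compactly supported continuous function $\phi$ on $G$ and uses uniform continuity of $\phi$ together with local finiteness of $\bnu$. A perhaps more elegant alternative would be to express the lower semicontinuous convex $1$-homogeneous density $\a$ as the support function of a convex subset of $\R^3$, thereby writing $\cA$ as a pointwise supremum of affine functionals of the triple $(\bnu,\mu^1,\mu^2)$; then the Jensen step becomes a direct consequence of Fubini and monotone convergence, avoiding the approximation argument altogether.
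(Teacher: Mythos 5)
Your proposal is correct and follows essentially the same route as the paper: the paper likewise reduces \eqref{eq:monotonconvolution} to the identity $\cA(\mu_z,\bnu_z)=\cA(\mu,\bnu)$ (proved by a change of variables with a diagonally translated reference measure, using \eqref{eq:translationinv-J}) together with the Jensen-type bound $\cA(\mu*k,\bnu*k)\leq\int\cA(\mu_z,\bnu_z)k(z)\,\dd z$ obtained from convexity (Lemma \ref{lem:convexityPhi}) and lower semicontinuity (Lemma \ref{lem:lscaction}). Your discretization of $k$ merely spells out the step the paper states in one line, so no further comparison is needed.
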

\begin{proof}
  We can assume without restriction that $\cA(\mu,\bnu)$ is finite as
  otherwise there is nothing to proof. Let us introduce the maps
  $\tau_z:x\mapsto x+z$ for $z\in\R^d$ and let us denote by
  $\mu_z,\bnu_z$ the push forward $(\tau_z)_*\mu=\mu(\cdot-z)$,
  resp. $(\tau_z\times\tau_z)_*\bnu=\bnu(\cdot-\binom{z}{z})$. Using
  the convexity of the action functional, Lemma
  \ref{lem:convexityPhi}, together with its lower semicontinuity,
  Lemma \ref{lem:lscaction}, we see that
  \begin{align*}
    \cA( \mu*k,\bnu*k )~\leq~\int\cA( \mu_z,\bnu_z ) k(z)\dd z\ .
  \end{align*}
  Thus the proof is complete if we show that $\cA( \mu_z,\bnu_z)=\cA(
  \mu,\bnu )$ for all $z\in\R^d$. To this end recall the definition
  \eqref{eq:defmu12}. Using the the invariance property
  \eqref{eq:translationinv-J} it is immediate to check that
  $\mu_z^i=(\tau_z\times\tau_z)_*\mu^i$ for $i=1,2$. Now choose
  $\lambda\in\cM_{loc}(G)$ with $\mu^i=\rho^i\lambda$ and
  $\bnu=w\lambda$. Then for all $z\in\R^d$ we have
  $(\mu_z)^i=(\mu^i)_z=\rho^i(\cdot-\binom{z}{z})\lambda_z$ and
  $\bnu_z=w(\cdot-\binom{z}{z})\lambda_z$. Hence we finally obtain
  \begin{align*}
    \cA( \mu_z,\bnu_z )~&=~\int\a\left(w(\cdot-\binom{z}{z}),\rho^1(\cdot-\binom{z}{z}),\rho^2(\cdot-\binom{z}{z})\right)\dd\lambda_z\\
    ~&=~\int\a(w,\rho^1,\rho^2)\dd\lambda~=~\cA(\mu,\bnu)\ .
  \end{align*}
\end{proof}

\section{A non-local continuity equation}\label{sec:nonlocal-ce}

In this section we will consider the continuity equation
\begin{equation}\label{eq:ce}
 \partial_t\mu_t + \dgrad\cdot\bnu_t~=~0 \qquad \mbox{on }(0,T)\times\R^d\ .
\end{equation}
Here $(\mu_t)_{t\in[0,T]}$ and $(\bnu_t)_{t\in[0,T]}$ are Borel
families of measures in $\cP(\R^d)$ and $\cM_{loc}(G)$ respectively
such that
\begin{equation}\label{eq:cefinitemass}
\int_0^T\int\big(1\wedge\abs{x-y}\big)\abs{\bnu_t}(\dd x,\dd y)\dd t~<~\infty\ .
\end{equation}
We suppose that \eqref{eq:ce} holds in the sense of
distributions. More precisely, we require that for all $\varphi\in
C^\infty_c((0,T)\times\R^d)$ :
\begin{equation}\label{eq:cedistribution}
  \int_0^T\int\partial_t\varphi_t(x) \mu_t(\dd x) \dd t +
  \frac12\int_0^T\int\dgrad\varphi_t(x,y) \bnu_t(\dd x,\dd y)\dd t~=~0\ .
\end{equation}
Recall that for a function $\varphi:\R^d\to\R$ we denote by
$\dgrad\varphi(x,y)=\varphi(y)-\varphi(x)$ the discrete gradient. Note
that \eqref{eq:cefinitemass} is a natural integrability assumption one
should make to ensure that the second term in
\eqref{eq:cedistribution} is well-defined. The following is an
adaptation of \cite[Lemma 8.1.2]{AGS05}.

\begin{lemma}\label{lem:cont-represent}
  Let $(\mu_t)_{t\in[0,T]}$ and $(\bnu_t)_{t\in[0,T]}$ be Borel
  families of measures in $\cP(\R^d)$ and $\cM_{loc}(G)$ satisfying
  \eqref{eq:ce} and \eqref{eq:cefinitemass}. Then there exists a
  weakly continuous curve $(\tilde\mu_t)_{t\in[0,T]}$ such that
  $\tilde\mu_t=\mu_t$ for a.e. $t\in[0,T]$. Moreover, for every
  $\varphi\in C^\infty_c([0,T]\times\R^d)$ and all $0\leq t_0\leq
  t_1\leq T$ we have :
  \begin{equation}\label{eq:cedistributionrefined}
    \int\varphi_{t_1} \dd\tilde\mu_{t_1}-\int\varphi_{t_0} \dd\tilde\mu_{t_0}~=~\int_{t_0}^{t_1}\int\partial_t\varphi \dd\mu_t \dd t + \frac12\int_{t_0}^{t_1}\int\dgrad\varphi \dd\bnu_t\dd t\ .
  \end{equation}
 \end{lemma}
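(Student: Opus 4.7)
The plan follows the classical argument of \cite[Lem.~8.1.2]{AGS05}, adapted to the discrete gradient: reduce the PDE to scalar ODEs for the pairings with test functions, regularise in time, and then reconstruct a curve of measures via Riesz representation. The essentially new feature is that the flux $\bnu_t$ lives on the off-diagonal set $G$ and is only controlled through \eqref{eq:cefinitemass}.

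\emph{Step 1 (scalar ODEs).} For $\varphi\in C^\infty_c(\R^d)$ set $\zeta_\varphi(t):=\mu_t(\varphi)$ and $V_\varphi(t):=\tfrac12\int\dgrad\varphi(x,y)\,\bnu_t(\dd x,\dd y)$. Since $\varphi$ is bounded and Lipschitz, $|\dgrad\varphi(x,y)|\leq C_\varphi(1\wedge|x-y|)$, so \eqref{eq:cefinitemass} gives $V_\varphi\in L^1(0,T)$. Testing \eqref{eq:cedistribution} with separated variables $\eta(t)\varphi(x)$, $\eta\in C^\infty_c(0,T)$, yields $\zeta_\varphi'=V_\varphi$ in $\mathcal{D}'(0,T)$, so $\zeta_\varphi$ admits an absolutely continuous representative $\tilde\zeta_\varphi$ with
\[
\tilde\zeta_\varphi(t_1)-\tilde\zeta_\varphi(t_0)\;=\;\int_{t_0}^{t_1}V_\varphi(r)\,\dd r.
\]

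\emph{Step 2 (Riesz reconstruction).} Fix a countable set $\mathcal{D}\subset C^\infty_c(\R^d)$ dense in $C_0(\R^d)$ in the sup norm, together with cutoffs $\chi_n\in C^\infty_c(\R^d)$ satisfying $0\leq\chi_n\leq 1$ and $\chi_n\uparrow 1$ pointwise. Outside a single Lebesgue-null set $N\subset[0,T]$ the equality $\zeta_\varphi=\tilde\zeta_\varphi$ holds simultaneously for every $\varphi\in\mathcal{D}\cup\{\chi_n\}$. The linear functional $\varphi\mapsto\tilde\zeta_\varphi(t)$ on $\mathcal{D}$ is bounded by $\|\varphi\|_\infty$ (this holds for $t\notin N$ since $\mu_t\in\cP(\R^d)$, and extends to $t\in N$ by continuity) and positive, so it extends uniquely to a bounded positive functional on $C_0(\R^d)$. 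Riesz representation then yields a nonnegative Radon measure $\tilde\mu_t$ with $\tilde\mu_t(\R^d)\leq 1$, and $\tilde\mu_t=\mu_t$ for $t\notin N$.

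\emph{Step 3 (mass conservation — the main obstacle).} The critical point is to rule out loss of mass to infinity on the null set $N$, since \textit{a priori} the Riesz limits might only give sub-probability measures. The uniform estimate $|\dgrad\chi_n(x,y)|\leq C(1\wedge|x-y|)$ together with the pointwise convergence $\dgrad\chi_n\to 0$ on $G$ and assumption \eqref{eq:cefinitemass} let dominated convergence yield $\int_0^T|V_{\chi_n}|\,\dd r\to 0$. Fixing any $s\notin N$, Step 1 gives
\[
\tilde\zeta_{\chi_n}(t)\;=\;\mu_s(\chi_n)+\int_s^t V_{\chi_n}(r)\,\dd r\;\xrightarrow{n\to\infty}\;1
\]
for every $t\in[0,T]$, while monotone convergence identifies $\tilde\zeta_{\chi_n}(t)=\int\chi_n\,\dd\tilde\mu_t\nearrow\tilde\mu_t(\R^d)$. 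Hence $\tilde\mu_t\in\cP(\R^d)$ for every $t$. Weak continuity of $t\mapsto\tilde\mu_t$ now follows from continuity of $\tilde\zeta_\varphi$ for $\varphi\in\mathcal{D}$, density in $C_0(\R^d)$, and the absence of mass escape.

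\emph{Step 4 (refined identity).} For product test functions $\varphi(t,x)=\eta(t)\psi(x)$ with $\psi\in C^\infty_c(\R^d)$ the function $\eta\cdot\tilde\zeta_\psi$ is absolutely continuous, so the Leibniz rule combined with $\tilde\zeta_\psi'=V_\psi$ gives \eqref{eq:cedistributionrefined}. For a general $\varphi\in C^\infty_c([0,T]\times\R^d)$ one approximates $\varphi$ and $\partial_t\varphi$ uniformly by finite sums of such products (all supported in a common compact set in $\R^d$) and passes to the limit using the $L^1$-integrability of $V_\psi$ and the already established weak continuity of $\tilde\mu$.
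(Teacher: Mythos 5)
Your Steps 1--3 are correct and essentially reproduce (in more detail) what the paper does: the reduction to scalar ODEs via separated test functions and the bound $\abs{\dgrad\varphi(x,y)}\leq C_\varphi(1\wedge\abs{x-y})$ is exactly the paper's estimate \eqref{eq:weakderiv}, while the Riesz reconstruction and the no-mass-loss argument with dilated cutoffs spell out what the paper delegates to \cite[Lemma 8.1.2]{AGS05}; your dominated-convergence argument for $\int_0^T\abs{V_{\chi_n}}\,\dd t\to 0$ is fine, provided you choose the $\chi_n$ with uniformly bounded Lipschitz constants (e.g.\ $\chi_n=\chi(\cdot/n)$), which your stated hypotheses on $\chi_n$ do not yet guarantee but which is a trivial fix.

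Step 4, however, deviates from the paper and as written has a gap. For a general $\varphi\in C^\infty_c([0,T]\times\R^d)$ the paper does not approximate by products at all: it inserts $\eta_\eps(t)\varphi(t,x)$ into \eqref{eq:cedistribution} with $\eta_\eps\to\one_{(t_0,t_1)}$, $\eta_\eps'\to\delta_{t_0}-\delta_{t_1}$, and uses the weak continuity of $\tilde\mu$ at the two endpoints to pass to the limit, which yields \eqref{eq:cedistributionrefined} directly. In your route, uniform approximation of $\varphi$ and $\partial_t\varphi$ by finite sums $\sum_i\eta_i(t)\psi_i(x)$ is not enough to pass to the limit in the flux term: \eqref{eq:cefinitemass} only controls $(1\wedge\abs{x-y})\abs{\bnu_t}$, and $\abs{\bnu_t}$ may have infinite mass near the diagonal, so the bound $\abs{\dgrad(\varphi^k_t-\varphi_t)}\leq 2\norm{\varphi^k-\varphi}_\infty$ gives nothing there. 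What you need in addition is a uniform bound $L$ on the spatial Lipschitz constants of the approximants, so that $\abs{\dgrad(\varphi^k_t-\varphi_t)(x,y)}\leq\min\big(2\eps_k,\,L\abs{x-y}\big)$ and dominated convergence (with dominating function a multiple of $1\wedge\abs{x-y}$) applies. Such approximants exist (e.g.\ $\varphi^k(t,x)=\sum_i\eta^k_i(t)\varphi(s^k_i,x)$ built from a smooth partition of unity in time, whose spatial gradients are bounded by $\norm{\nabla_x\varphi}_\infty$), so the gap is fixable; but either state this extra control explicitly, or switch to the paper's time-cutoff argument, which avoids the product approximation altogether.
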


\begin{proof}
  Let us set
  \begin{align*}
    V(t)~:=~\int\big(1\wedge \abs{x-y}\big)\abs{\bnu_t}(\dd x,\dd y)\ .
  \end{align*}
  By assumption $t\mapsto V(t)$ belongs to $L^1(0,T)$. Fix $\xi\in
  C^\infty_c(\R^d)$. We claim that the map
  $t\mapsto\mu_t(\xi)=\int\xi\dd\mu_t$ belongs to
  $W^{1,1}(0,T)$. Indeed, using test functions of the form
  $\varphi(t,x)=\eta(t)\xi(x)$ with $\eta\in C^\infty_c(0,T)$,
  equation \eqref{eq:cedistribution} shows that the distributional
  derivative of $\mu_t(\xi)$ is given by
  \begin{align*}
    \dot{\mu}_t(\xi)~=~\frac12\int\dgrad\xi\dd\bnu_t
  \end{align*}
  for a.e. $t\in(0,T)$ and we can estimate 
  \begin{align}\label{eq:weakderiv}
   \abs{\dot{\mu}_t(\xi)}~\leq~\frac12\int\abs{\dgrad\xi}\dd\abs{\bnu_t}~\leq~\frac12\norm{\xi}_{C^1}V(t)\ .
  \end{align}
  Based on \eqref{eq:weakderiv} we can argue as in \cite[Lemma
  8.1.2]{AGS05} to obtain existence of a weakly continuous
  representative $t\mapsto\tilde\mu_t$.

  To prove \eqref{eq:cedistributionrefined} fix $\varphi\in
  C^\infty_c([0,T]\times\R^d)$ and choose $\eta_\varepsilon\in
  C^\infty_c(t_0,t_1)$ such that
  \begin{align*}
    0\leq\eta_\varepsilon\leq 1\ ,\quad
    \lim\limits_{\varepsilon\to0}\eta_\varepsilon(t)=1_{(t_0,t_1)}(t)\
    \forall t\in[0,T]\ ,\quad
    \lim\limits_{\varepsilon\to0}\eta_\varepsilon'=\delta_{t_0}-\delta_{t_1}\;.
  \end{align*}
  Now equation \eqref{eq:cedistribution} implies
  \begin{align*}
    -\int_0^T\eta_\varepsilon'\int\varphi
    \dd\tilde\mu_t\dd t~=~\int_0^T\eta_\varepsilon\int\partial_t\varphi
    \dd\mu_t \dd t +
    \frac12\int_0^T\eta_\varepsilon\int\dgrad\varphi
    \dd\bnu_t \dd t\ .
  \end{align*}
  Thanks to the continuity of $t\mapsto\tilde\mu_t$ we can pass to
  limit as $\varepsilon\to0$ and obtain
  \eqref{eq:cedistributionrefined}.
\end{proof}
In view of the previous Lemma it makes sense to define solutions to
the continuity equation in the following way.
\begin{definition}\label{def:ce}
  We denote by $\CE_{T}(\bar\mu_0,\bar\mu_1)$ the set of all pairs
  $(\mu,\bnu)$ satisfying the following conditions:
  \begin{align} \label{eq:ce-conditions} \left\{ \begin{array}{ll}
        {(i)} & \mu : [0,T] \to \cP(\R^d)  \text{ is weakly continuous}\;;\\
        {(ii)} &  \mu_0 = \bar\mu_0\;, \quad \mu_T = \bar\mu_1\;; \\
        {(iii)} & (\bnu_t)_{t\in[0,T]}  \text{ is a Borel family of measures in } \cM_{loc}(G)\;;\\
        {(iv)} & \int_0^T\int\big(1\wedge \abs{x-y}\big)\abs{\bnu_t}(\dd x,\dd y)\dd t~<~\infty\;;\\
        {(v)} &  \text{We have in the sense of distributions:}\\
        &\displaystyle{\partial_t\mu_t + \dgrad\cdot\bnu_t~=~0}\;.\
      \end{array} \right.
  \end{align}
\end{definition}
The following result will allow us to extract subsequential limits
from sequences of solutions to the continuity equation which have
bounded action.
\begin{proposition}[Compactness of solutions to the continuity
  equation]\label{prop:cecompactness}
  Let $(\mu^n,\bnu^n)$ be a sequence in
  $\CE_{T}(\bar\mu_0,\bar\mu_1)$ such that
\begin{equation}\label{eq:bdd-energy}
 \sup\limits_n\int_0^T\cA(\mu_t^n,\bnu_t^n) \dd t~<~\infty\ .
\end{equation}
Then there exists a couple
$(\mu,\bnu)\in\CE_{T}(\bar\mu_0,\bar\mu_1)$ such that up to
extraction of a subsequence
\begin{align*}
 & \mu^n_t\rightharpoonup\mu_t\quad \mbox{weakly in }\cP(\R^d)\ \mbox{for all } t\in[0,T]\ ,\\
 & \bnu^n\rightharpoonup^*\bnu\quad \mbox{weakly* in }\cM(G\times(0,T))\ .
\end{align*}
Moreover along this subsequence we have :
\begin{equation*}
 \int_0^T\cA(\mu_t,\bnu_t) \dd t~\leq~\liminf\limits_n \int_0^T\cA(\mu_t^n,\bnu_t^n) \dd t\ .
\end{equation*}
\end{proposition}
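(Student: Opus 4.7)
The approach is to follow the Benamou--Brenier compactness scheme of \cite{DNS09}, \cite{AGS05} adapted to the non-local setting. I would first extract a weak-$*$ subsequential limit of $\bnu^n$ in $\cM_{loc}(G\times(0,T))$, then extract a pointwise-in-$t$ limit of $\mu^n$ via Arzel\`a--Ascoli together with a uniform tightness estimate, pass to the limit in the weak formulation \eqref{eq:cedistribution} in order to place the limit pair in $\CE_T(\bar\mu_0,\bar\mu_1)$, and finally invoke Proposition \ref{prop:lsc} to obtain the lower semicontinuity of the time-integrated action.

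Compactness of $\bnu^n$ is immediate: Lemma \ref{lem:integrability}(ii) combined with Cauchy--Schwarz in $t$ yields $|\bnu^n|(K\times[0,T])\leq C(K)\sqrt{T}\bigl(\sup_n\int_0^T\cA(\mu^n_t,\bnu^n_t)\dd t\bigr)^{1/2}$ for every compact $K\subset G$, so a diagonal Banach--Alaoglu argument along an exhaustion of $G$ by compacts produces a subsequence with weak-$*$ limit $\bnu$. For $\mu^n$, set $V^n(t):=\int(1\wedge|x-y|)\dd|\bnu^n_t|$; Lemma \ref{lem:integrability}(i) gives $\|V^n\|_{L^2(0,T)}^2\leq C\int_0^T\cA\dd t$, so applying \eqref{eq:cedistributionrefined} to $\varphi(t,x)=\xi(x)$ with $\xi\in C_c^\infty(\R^d)$ and using $|\dgrad\xi(x,y)|\leq C_\xi(1\wedge|x-y|)$ yields the uniform H\"older-$\tfrac12$ estimate $|\mu^n_t(\xi)-\mu^n_s(\xi)|\leq C_\xi\sqrt{t-s}$. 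A diagonal Arzel\`a--Ascoli argument on a countable dense subset of $C_c^\infty(\R^d)$, combined with the uniform tightness discussed below, then produces a subsequence and a weakly continuous curve $\mu$ with $\mu^n_t\rightharpoonup\mu_t$ for every $t\in[0,T]$.

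Uniform tightness of $\{\mu^n_t:n\in\N,\,t\in[0,T]\}$ is the principal technical obstacle, since mass is being transported by non-local jumps that are a priori unbounded in size. The initial datum $\bar\mu_0$ is tight; to propagate this, I would test \eqref{eq:cedistributionrefined} against smooth cutoffs $\chi_R$ with $\chi_R\equiv 0$ on $B_R$, $\chi_R\equiv 1$ off $B_{2R}$, $0\leq\chi_R\leq 1$. Cauchy--Schwarz against the action yields
\begin{align*}
\Big|\int\dgrad\chi_R\dd\bnu^n_s\Big|\leq \sqrt{2\cA(\mu^n_s,\bnu^n_s)}\cdot\sqrt{\int|\chi_R(y)-\chi_R(x)|^2 J(x,\dd y)\mu^n_s(\dd x)}\ ,
\end{align*}
and using $|\chi_R(y)-\chi_R(x)|^2\leq 4(1\wedge|x-y|^2)$, Assumption \ref{ass:standing}, and pointwise vanishing of $|\chi_R(y)-\chi_R(x)|$ as $R\to\infty$, the inner integral tends to zero uniformly for $x$ in any fixed compact set. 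A Gr\"onwall-type iteration exploiting the tightness of $\bar\mu_0$ and the uniform $L^2$-bound on $V^n$ then delivers $\sup_{n,t}\mu^n_t(\chi_R)\to 0$ as $R\to\infty$.

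It remains to pass to the limit and to obtain the LSC statement. For $\varphi\in C_c^\infty((0,T)\times\R^d)$ the mass term $\int_0^T\int\partial_t\varphi\dd\mu^n_t\dd t$ converges by pointwise weak convergence of $\mu^n_t$ and dominated convergence in $t$. For the flux term I would note that $\dgrad\varphi$ is bounded on $G$ but need not vanish at spatial infinity, so I split $\dgrad\varphi=\eta_K\dgrad\varphi+(1-\eta_K)\dgrad\varphi$ with $\eta_K\in C_c(G)$, $\eta_K\equiv 1$ on a compact $K$: the first piece converges via the weak-$*$ convergence of $\bnu^n$, while the tail is bounded by $C_\varphi\int_0^T\int_{G\setminus K}(1\wedge|x-y|)\dd|\bnu^n_s|\dd s$, which is made arbitrarily small by a localized version of Lemma \ref{lem:integrability}(i) and the uniform action bound as $K$ exhausts $G$. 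Integrability condition (iv) of Definition \ref{def:ce} then transfers to the limit by lower semicontinuity of total variation combined with Lemma \ref{lem:integrability}(i), and the final LSC bound follows directly from Proposition \ref{prop:lsc} applied to the vector-valued measure $(\bnu,\mu^1\otimes\dd t,\mu^2\otimes\dd t)$ on the product space $G\times(0,T)$.
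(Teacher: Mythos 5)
Your overall scheme (weak$^*$ extraction of $\bnu^n$ via Lemma \ref{lem:integrability}(ii), the $\tfrac12$-H\"older equicontinuity of $t\mapsto\mu^n_t(\xi)$ from Lemma \ref{lem:integrability}(i), passage to the limit in the weak formulation, and lower semicontinuity via Proposition \ref{prop:lsc} on $G\times(0,T)$) is the same as the paper's, but the step you single out as the principal obstacle --- uniform tightness of $\{\mu^n_t\}$ --- is where your argument genuinely breaks down. The quantity you must make small is $\int g_R\,\dd\mu^n_s$ with $g_R(x)=\int|\chi_R(y)-\chi_R(x)|^2\,J(x,\dd y)$. Assumption \ref{ass:standing} (with a Dini argument) gives $g_R\to0$ only \emph{locally uniformly} in $x$, while off a compact set you merely have a uniform bound on $g_R$; hence $\int g_R\,\dd\mu^n_s\le\sup_{K}g_R+C\,\mu^n_s(\R^d\setminus K)$, and controlling the last term is exactly the tightness you are trying to prove. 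The ``Gr\"onwall-type iteration'' does not close this circle: setting $T_R:=\sup_{n,t}\mu^n_t(\R^d\setminus B_R)$, your estimate has the shape $T_{2R}\le\bar\mu_0(\chi_R)+b\sqrt{\eps_R+T_{R/2}}$ with $b$ proportional to the (not small) square root of the action bound, and in the limit $R\to\infty$ this only yields $u\le b\sqrt{u}$, i.e.\ $u\le b^2$, not $u=0$. Worse, the locally uniform smallness is needed on balls that grow with $R$, and Assumption \ref{ass:standing} does not give decay of $\sup_{x\in B_{R/2}}\int_{|y-x|\ge R/2}(1\wedge|x-y|^2)J(x,\dd y)$ as $R\to\infty$ (true for translation-invariant kernels, false in general). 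The same circularity infects your limit passage in the flux term: the ``localized version of Lemma \ref{lem:integrability}(i)'' you invoke, i.e.\ uniform-in-$n$ smallness of $\int_0^T\int_{G\setminus K}(1\wedge|x-y|)\,\dd|\bnu^n_s|\dd s$, is itself a tightness statement for the measures $(1\wedge|x-y|)|\bnu^n_t|\dd t$ that is not available a priori.

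The paper shows that no a priori tightness is needed, and you should restructure accordingly. One tests only with time-independent $\xi\in C^\infty_c(\R^d)$, writes $\int_{t_0}^{t_1}\!\int\dgrad\xi\,\dd\bnu^n_t\dd t$ as the integral of the bounded function $\one_{(t_0,t_1)}(t)\,\dgrad\xi(x,y)(1\wedge|x-y|)^{-1}$ against the finite measures $\tilde\bnu^n=(1\wedge|x-y|)\bnu^n_t\,\dd t$, and passes to the limit there; combined with \eqref{eq:cedistributionrefined} this identifies $\lim_n\mu^n_t(\xi)=\bar\mu_0(\xi)+\tfrac12\int_0^t\!\int\dgrad\xi\,\dd\bnu_s\dd s$ for all $\xi\in C^\infty_c(\R^d)$, i.e.\ vague convergence to some $\mu_t$. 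That $\mu_t(\R^d)=1$ --- which upgrades vague to weak convergence in $\cP(\R^d)$ --- is then read off from this limit identity by letting $\xi\uparrow1$ with $|\dgrad\xi|\le C(1\wedge|x-y|)$ and $\dgrad\xi\to0$ pointwise, using the integrability \eqref{eq:cefinitemass} of the limit family $(\bnu_t)$ (obtained from disintegration and lower semicontinuity) and dominated convergence. In other words, tightness of $\{\mu^n_t\}$ comes out of the limiting continuity equation rather than going into it; your route could only be salvaged under extra structural hypotheses on $J$ (e.g.\ translation invariance or uniform tail decay), which the proposition does not assume.
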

\begin{proof}
  For each $n$ define the measure $\bnu^n:=\int_0^T\bnu^n_t\dd
  t\in\cM_{loc}(G\times(0,T))$.  From Lemma \ref{lem:integrability}
  and \eqref{eq:bdd-energy} we infer immediately that
  \begin{align}\label{eq:unif-integrability1}
    \sup\limits_n\int_0^T\int\big(1\wedge\abs{x-y}\big)\abs{\bnu^n}(\dd x,\dd y)\dd t~<~\infty\ .
  \end{align}
  Moreover, for every compact set $K\subset G$ we obtain
  \begin{align}\label{eq:unif-integrability2}
    \sup\limits_n\abs{\bnu^n}(K\times[0,T])~\leq~\sup\limits_n\int_0^T\abs{\bnu^n_t}(K)\dd t~<~\infty\ .
  \end{align}
  i.e. $\bnu^n$ has total variation uniformly bounded on every compact
  subset of $G\times[0,T]$. Hence we can extract a subsequence (still
  indexed by $n$) such that $\bnu^n\rightharpoonup^*\bnu$ in
  $\cM_{loc}(G\times[0,T])$. By the disintegration theorem we have the
  representation $\bnu=\int_0^T\bnu_t\dd t$ for a Borel family
  $(\bnu_t)$ still satisfying \eqref{eq:cefinitemass}. Let us set
  $D=\{(x,x)\ :\ x\in\R^d\}$ and define the finite measures
  $\tilde\bnu^n\in\cM(\R^{2d}\times[0,T])$ given by $\tilde\bnu^n(\dd
  x,\dd y)=(1\wedge\abs{x-y})\bnu^n(\dd x,\dd y)\dd t$ on
  $G\times[0,T]$ and
  $\tilde\bnu^n(D\times[0,T])=0$. \eqref{eq:unif-integrability1}
  implies that (up to extraction of another subsequence)
  $\tilde\bnu^n\rightharpoonup^*\tilde\bnu$ in
  $\cM(\R^{2d}\times[0,T])$ where $\tilde\bnu$ is defined similar to
  $\tilde\bnu^n$.
  
  Let $0\leq t_0\leq t_1\leq T$ and $\xi\in C^\infty_c(\R^d)$. We
  claim that
  \begin{align}\label{eq:converge-bnu}
    \int_{t_0}^{t_1}\int\dgrad\xi \dd\bnu^n_t
    \dd t~\overset{n\to\infty}{\longrightarrow}~\int_{t_0}^{t_1}\int\dgrad\xi
    \dd\bnu_t \dd t\ .
  \end{align}
  Let us define $\beta:\R^{2d}\times[0,T]\to\R$ by setting
  \begin{align*}
    \beta(x,y,t)=
    \begin{cases}
      \one_{(t_0,t_1)}(t)\dgrad\xi(x,y)(1\wedge\abs{x-y})^{-1}\;,& x\neq y\;,\\
      0\;, & x=y\;.
    \end{cases}
  \end{align*}
  Now \eqref{eq:converge-bnu} is equivalent to $\int\beta\dd
  \tilde\bnu^n\to\int\beta\dd\tilde\bnu$. Note that $\beta$ is bounded
  with compact support and that the discontinuity set of $\beta$ is
  concentrated on $\R^{2d}\times\{t_0,t_1\}\cup D\times[0,T]$ which is
  negligible for $\tilde\bnu$. Hence the claim follows from general
  convergence results (see e.g. \cite[Prop. 5.1.10]{AGS05}).
  
  Combining now the convergence \eqref{eq:converge-bnu} with
  \eqref{eq:cedistributionrefined} for $\varphi(t,x)=\xi(x)$ and
  $t_0=0,t_1=t$ we infer that $\mu^n_t$ converges weakly to some
  $\mu_t\in\cP(\R^d)$ for every $t\in[0,T]$. It is easily checked that
  the couple $(\mu,\bnu)$ belongs to
  $\CE_{T}(\bar\mu_0,\bar\mu_1)$. As in Lemma \ref{lem:lscaction} the
  lower semicontinuity now follows from Proposition \ref{prop:lsc} by
  considering $\int_0^T\cA(\mu_t,\bnu_t)\dd t$ as an integral
  functional on the space $\cM_{loc}(G\times[0,T])$.
\end{proof}

\section{A non-local transport distance}\label{sec:metric}

We are now ready to give the definition of the distance $\cW$. We will
then establish various properties, in particular existence of
geodesics. Moreover, we will characterize absolutely continuous curves
in the metric space $(\cP,\cW)$.

\begin{definition}\label{def:metric}
 For $\bar\mu_0,\bar\mu_1\in\cP(\R^d)$ we define
\begin{equation}\label{eq:defmetric}
\cW(\bar\mu_0,\bar\mu_1)^2~:=~\inf\left\{\int_0^1\cA(\mu_t,\bnu_t)\dd t\ :\quad
(\mu,\bnu)\in\CE_{1}(\bar\mu_0,\bar\mu_1)\right\}\ .
\end{equation}
\end{definition}

Let us first give an equivalent characterization of the infimum in
\eqref{eq:defmetric}.

\begin{lemma}\label{lem:equivcharacter}
For any $T>0$ and $\bar\mu_0,\bar\mu_1\in\cP(\R^d)$ we have :
\begin{equation}\label{eq:equivcharacter}
\cW(\bar\mu_0,\bar\mu_1)~=~\inf\left\{\int_0^T
\sqrt{\cA(\mu_t,\bnu_t)}\dd t\ :\quad
(\mu,\bnu)\in\CE_{T}(\bar\mu_0,\bar\mu_1)\right\}\ .
\end{equation}
\end{lemma}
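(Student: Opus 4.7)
The plan rests on the fact that $\cA(\mu,\cdot)$ is $2$-homogeneous in its second argument, so that $\sqrt{\cA(\mu,\cdot)}$ is positively $1$-homogeneous. Writing $R_T$ for the infimum on the right of \eqref{eq:equivcharacter}, I would show $R_T = \cW(\bar\mu_0,\bar\mu_1)$ for every $T>0$ via the classical interplay of Cauchy--Schwarz, which gives one inequality trivially, and arc-length reparametrization, which yields the converse by forcing equality in Cauchy--Schwarz.

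\emph{Reparametrization invariance.} First I would establish that for any strictly increasing absolutely continuous bijection $\phi:[0,T']\to[0,T]$ with $\phi'>0$ a.e.\ and absolutely continuous inverse, the assignment $(\mu,\bnu)\mapsto(\tilde\mu,\tilde\bnu)$ given by $\tilde\mu_s:=\mu_{\phi(s)}$ and $\tilde\bnu_s:=\phi'(s)\,\bnu_{\phi(s)}$ maps $\CE_T(\bar\mu_0,\bar\mu_1)$ into $\CE_{T'}(\bar\mu_0,\bar\mu_1)$ and satisfies $\int_0^{T'}\sqrt{\cA(\tilde\mu_s,\tilde\bnu_s)}\,\dd s = \int_0^T\sqrt{\cA(\mu_t,\bnu_t)}\,\dd t$. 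The continuity equation for $(\tilde\mu,\tilde\bnu)$ transfers via the integral identity \eqref{eq:cedistributionrefined} of Lemma \ref{lem:cont-represent} under the substitution $t=\phi(s)$, while the equality of integrals is just the $1$-homogeneity of $\sqrt{\cA(\mu,\cdot)}$ combined with the same change of variables. In particular $R_T=R_{T'}$ for all $T,T'>0$.

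\emph{Both inequalities.} Cauchy--Schwarz on $[0,1]$ yields $\int_0^1\sqrt{\cA(\mu_t,\bnu_t)}\,\dd t\le\bigl(\int_0^1\cA(\mu_t,\bnu_t)\,\dd t\bigr)^{1/2}$ for every $(\mu,\bnu)\in\CE_1(\bar\mu_0,\bar\mu_1)$, so $R_1\le\cW$ and by reparametrization invariance $R_T\le\cW$ for all $T$. For the converse I would pick $(\mu,\bnu)\in\CE_T$ with $L:=\int_0^T\sqrt{\cA(\mu_t,\bnu_t)}\,\dd t<\infty$ and, for $\eps>0$, set $s_\eps(t):=\eps t+\int_0^t\sqrt{\cA(\mu_r,\bnu_r)}\,\dd r$, so that $s_\eps:[0,T]\to[0,L+\eps T]$ is strictly increasing with Lipschitz inverse $\phi_\eps$ (since $s_\eps'\ge\eps$). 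Composing $\phi_\eps$ with the affine rescaling of $[0,1]$ onto $[0,L+\eps T]$ and applying the reparametrization step yields a pair $(\tilde\mu^\eps,\tilde\bnu^\eps)\in\CE_1(\bar\mu_0,\bar\mu_1)$ for which a direct change-of-variables computation gives
\begin{equation*}
\int_0^1\cA(\tilde\mu^\eps_s,\tilde\bnu^\eps_s)\,\dd s \;=\; (L+\eps T)\int_0^T\frac{\cA(\mu_t,\bnu_t)}{\eps+\sqrt{\cA(\mu_t,\bnu_t)}}\,\dd t \;\le\; (L+\eps T)\,L.
\end{equation*}
Hence $\cW(\bar\mu_0,\bar\mu_1)^2\le(L+\eps T)L$, and sending $\eps\to 0$ followed by infimizing over $(\mu,\bnu)$ gives $\cW\le R_T$.

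\emph{Main obstacle.} The genuinely delicate part is the converse direction. The natural arc-length map $s(t):=\int_0^t\sqrt{\cA(\mu_r,\bnu_r)}\,\dd r$ need not be strictly increasing whenever $\sqrt{\cA}$ vanishes on a set of positive measure, so its inverse is ill-defined; the regularizer $\eps t$ restores strict monotonicity and yields a Lipschitz inverse, while the extra factor $L+\eps T$ in the bound collapses to $L^2$ as $\eps\to 0$. A secondary technicality is checking that $(\tilde\mu^\eps,\tilde\bnu^\eps)$ satisfies all conditions of Definition \ref{def:ce}, most importantly the integrability \eqref{eq:cefinitemass} and the distributional continuity equation; both transfer cleanly under the absolutely continuous time change via the integral formulation of Lemma \ref{lem:cont-represent}.
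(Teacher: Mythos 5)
Your argument is correct and is precisely the ``standard reparametrization argument'' the paper invokes by citing \cite[Lem.~1.1.4]{AGS05} and \cite[Thm.~5.4]{DNS09}: one direction by Cauchy--Schwarz, the other by the $\eps$-regularized arc-length reparametrization $s_\eps(t)=\eps t+\int_0^t\sqrt{\cA}\,\dd r$, using the $1$-homogeneity of $\sqrt{\cA(\mu,\cdot)}$ and the transfer of the continuity equation under absolutely continuous time changes. So you have simply written out in detail the very proof the paper delegates to the references.
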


\begin{proof}
  This follows from a standard reparametrization argument. See
  \cite[Lem. 1.1.4]{AGS05} or \cite[Thm. 5.4]{DNS09} for details in
  similar situations.
\end{proof}

The next result shows that the infimum in the definition above is
in fact a minimum.

\begin{proposition}\label{prop:minimizers}
  Let $\bar\mu_0,\bar\mu_1\in\cP(\R^d)$ be such that
  $W:=\cW(\bar\mu_0,\bar\mu_1)$ is finite. Then the infimum in
  \eqref{eq:defmetric} is attained by a curve
  $(\mu,\bnu)\in\CE_{1}(\bar\mu_0,\bar\mu_1)$ satisfying
  $\cA(\mu_t,\bnu_t)=W^2$ for a.e. $t\in[0,1]$.
\end{proposition}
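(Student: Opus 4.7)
The plan is to split the proof into two steps: first construct a minimizer by a standard compactness-plus-lower-semicontinuity argument based on Proposition \ref{prop:cecompactness}, then show that any such minimizer automatically has constant action via Cauchy--Schwarz combined with the reparametrization identity in Lemma \ref{lem:equivcharacter}.

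For existence, I would take a minimizing sequence $(\mu^n,\bnu^n)\in\CE_{1}(\bar\mu_0,\bar\mu_1)$ with
\begin{align*}
\lim_{n\to\infty}\int_0^1\cA(\mu_t^n,\bnu_t^n)\,\dd t~=~W^2\ .
\end{align*}
In particular the sequence has uniformly bounded action, so Proposition \ref{prop:cecompactness} applies and yields, along a subsequence, a couple $(\mu,\bnu)\in\CE_{1}(\bar\mu_0,\bar\mu_1)$ such that
\begin{align*}
\int_0^1\cA(\mu_t,\bnu_t)\,\dd t~\leq~\liminf_{n\to\infty}\int_0^1\cA(\mu_t^n,\bnu_t^n)\,\dd t~=~W^2\ .
\end{align*}
Since $W^2$ is the infimum in \eqref{eq:defmetric}, the reverse inequality is automatic, so $(\mu,\bnu)$ realizes the minimum.

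For the constant-speed property, I would apply the Cauchy--Schwarz inequality to the minimizer to get
\begin{align*}
\int_0^1\sqrt{\cA(\mu_t,\bnu_t)}\,\dd t~\leq~\left(\int_0^1\cA(\mu_t,\bnu_t)\,\dd t\right)^{1/2}~=~W\ .
\end{align*}
On the other hand, Lemma \ref{lem:equivcharacter} applied with $T=1$ gives the opposite inequality $W\leq\int_0^1\sqrt{\cA(\mu_t,\bnu_t)}\,\dd t$. Hence Cauchy--Schwarz must hold with equality, which forces the function $t\mapsto\sqrt{\cA(\mu_t,\bnu_t)}$ to be constant a.e. on $[0,1]$. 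Since its integral equals $W$, the constant is $W$, i.e. $\cA(\mu_t,\bnu_t)=W^2$ for a.e. $t\in[0,1]$.

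The main technical obstacle — extracting a weak$^*$ limit of the momentum measures $\bnu^n$ and verifying that the limit still satisfies the continuity equation — has already been absorbed into Proposition \ref{prop:cecompactness}. With that compactness statement in hand, both steps reduce to routine applications of joint lower semicontinuity and Cauchy--Schwarz, with no further delicate estimates required.
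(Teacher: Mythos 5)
Your argument is correct and follows essentially the same route as the paper: existence via the direct method using the compactness and lower semicontinuity of Proposition \ref{prop:cecompactness}, and constant speed by combining Lemma \ref{lem:equivcharacter} with the equality case of Cauchy--Schwarz (the paper phrases this as Jensen's inequality for $\sqrt{\cdot}$, which is the same computation).
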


\begin{proof}
  Existence of a minimizing curve
  $(\mu,\bnu)\in\CE_{1}(\bar\mu_0,\bar\mu_1)$ follows immediately by
  the direct method taking into account Proposition
  \ref{prop:cecompactness}. Invoking Lemma \ref{lem:equivcharacter}
  and Jensen's inequality we see that this curve satisfies
  \begin{align*}
    \int_0^1\sqrt{\cA(\mu_t,\bnu_t)}\dd
    t~\geq~W~=~\left(\int_0^1\cA(\mu_t,\bnu_t)\dd
      t\right)^{\frac{1}{2}}~\geq~\int_0^1\sqrt{\cA(\mu_t,\bnu_t)}\dd t\ .
  \end{align*}
 Hence we must have $\cA(\mu_t,\bnu_t)=W^2$ for a.e. $t\in[0,T]$.
\end{proof}
 
We now prove the first main result Theorem \ref{thm:1-metric}
announced in the introduction which we recall here for convenience.

\begin{theorem}\label{thm:distance}
  $\cW$ defines a (pseudo-) metric on $\cP(\R^d)$. The topology it
  induces is stronger than the weak topology and bounded sets
  w.r.t. $\cW$ are weakly compact. Moreover, the map
  $(\mu_0,\mu_1)\mapsto \cW(\mu_0,\mu_1)$ is lower semicontinuous
  w.r.t. weak convergence. For each $\tau\in\cP(\R^d)$ the set
  $\cP_\tau:=\{\mu\in\cP(\R^d)\ :\ \cW(\mu,\tau)<\infty\}$ equipped
  with the distance $\cW$ is a complete geodesic space.
\end{theorem}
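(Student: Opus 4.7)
My plan is to verify the pseudo-metric axioms first, then the comparison with the weak topology (from which weak compactness of bounded sets and lower semicontinuity both flow), and finally to combine these with Proposition~\ref{prop:minimizers} for geodesic existence and completeness.

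Reflexivity is immediate from the trivial pair $(\mu_t,\bnu_t)\equiv(\mu,0)\in\CE_1(\mu,\mu)$, and symmetry follows by time reversal: $(\mu_{1-t},-\bnu_{1-t})$ connects $\mu_1$ to $\mu_0$ with the same action, since $\a(w,s,t)$ depends only on $|w|$. For the triangle inequality I would exploit Lemma~\ref{lem:equivcharacter}: given $(\mu^i,\bnu^i)\in\CE_1(\mu_{i-1},\mu_i)$ for $i=1,2$, concatenate them on $[0,2]$ to obtain a pair in $\CE_2(\mu_0,\mu_2)$ and estimate
$$\cW(\mu_0,\mu_2)~\leq~\int_0^1\sqrt{\cA(\mu^1_t,\bnu^1_t)}\,\dd t+\int_0^1\sqrt{\cA(\mu^2_t,\bnu^2_t)}\,\dd t,$$
then infimize each piece separately.

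The elementary bound $|\dgrad\xi(x,y)|\leq C_\xi(1\wedge|x-y|)$ with $C_\xi=\max(2\|\xi\|_\infty,\|\nabla\xi\|_\infty)$, combined with the distributional identity \eqref{eq:cedistributionrefined}, Lemma~\ref{lem:integrability}(i), and Cauchy--Schwarz in time, yields
$$\abs{\mu_0(\xi)-\mu_1(\xi)}~\leq~C\cdot C_\xi\,\cW(\mu_0,\mu_1)\qquad\forall\xi\in C^\infty_c(\R^d),$$
so $\cW$-convergence implies narrow convergence. For lower semicontinuity of $\cW$, given $\mu_0^n\rightharpoonup\mu_0$, $\mu_1^n\rightharpoonup\mu_1$ with $\liminf_n\cW(\mu_0^n,\mu_1^n)=L<\infty$, I would select constant-speed minimizers via Proposition~\ref{prop:minimizers} and extract a limit $(\mu,\bnu)\in\CE_1(\mu_0,\mu_1)$ through Proposition~\ref{prop:cecompactness}; the lsc of the integrated action already packaged in that proposition gives $\int_0^1\cA(\mu_t,\bnu_t)\,\dd t\leq L^2$. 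For weak compactness of $\{\mu:\cW(\mu,\tau)\leq M\}$, I take optimal curves starting at the fixed $\tau$, apply Proposition~\ref{prop:cecompactness} to extract a limit family $(\mu_t,\bnu_t)$, and use the test-function estimate above with cut-offs to rule out escape of mass at infinity, so that the weak limits remain probability measures.

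Geodesic existence between any two points of $\cP_\tau$ is then the content of Proposition~\ref{prop:minimizers}, since a minimizer with $\cA(\mu_t,\bnu_t)\equiv\cW(\mu_0,\mu_1)^2$ is a constant-speed curve for the length functional of Lemma~\ref{lem:equivcharacter}. For completeness, a Cauchy sequence $\{\mu_n\}\subset\cP_\tau$ is automatically $\cW$-bounded, so by weak compactness it admits a weak subsequential limit $\mu_\infty$; lower semicontinuity places $\mu_\infty$ in $\cP_\tau$ and yields $\cW(\mu_n,\mu_\infty)\leq\liminf_k\cW(\mu_n,\mu_{n_k})\to 0$. The most delicate point, as typically in Benamou--Brenier-type constructions, will be showing tightness of $\cW$-bounded families: one must convert finiteness of the time-integrated action into a uniform decay of mass at infinity, which rests on the test-function estimate above and on Assumption~\ref{ass:standing} controlling the behaviour of $J$ at small and large scales. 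Every other assertion then reduces cleanly to the interplay between Propositions~\ref{prop:minimizers} and \ref{prop:cecompactness}.
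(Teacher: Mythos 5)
Your proposal follows essentially the same route as the paper: concatenation together with the reparametrized length functional of Lemma~\ref{lem:equivcharacter} for the triangle inequality, the $C^1$ test-function estimate based on \eqref{eq:cedistributionrefined} and Lemma~\ref{lem:integrability} for the comparison with the weak topology, Proposition~\ref{prop:cecompactness} for lower semicontinuity and weak compactness, and Proposition~\ref{prop:minimizers} combined with lower semicontinuity for geodesics and completeness. The only cosmetic differences are that the paper obtains the separation property from a zero-action minimizing curve whereas your test-function bound already yields it (you should state this explicitly), and that the tightness issue you flag is absorbed into the conclusion of Proposition~\ref{prop:cecompactness} (whose stated limit is a curve of probability measures), so at the level of the proof of this theorem no additional cut-off argument is required.
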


\begin{proof}
  Symmetry of $\cW$ is obvious from the fact that
  $\a(w,\cdot,\cdot)=\a(-w,\cdot,\cdot)$. Equation
  \eqref{eq:cedistributionrefined} from Lemma \ref{lem:cont-represent}
  shows that two curves in $\CE_{1}$ can be concatenated to obtain a
  curve in $\CE_{2}$. Hence the triangle inequality follows easily
  using Lemma \ref{lem:equivcharacter}. To see that
  $\cW(\bar\mu_0,\bar\mu_1)>0$ whenever $\bar\mu_0\neq\bar\mu_1$
  assume that $\cW(\bar\mu_0,\bar\mu_1)=0$ and choose a minimizing
  curve $(\mu,\bnu)\in\CE_{1}(\bar\mu_0,\bar\mu_1)$. Then we must have
  $\cA(\mu_t,\bnu_t)=0$ and hence $\bnu_t=0$ for
  a.e. $t\in(0,1)$. From the continuity equation in the form
  \eqref{eq:cedistributionrefined} we infer $\bar\mu_0=\bar\mu_1$.
  
  Let us now show that the topology induced by $\cW$ is stronger than
  the weak one. Let $\mu_n,\mu\in\cP(\R^d)$ with $\cW(\mu_n,\mu)\to0$
  and choose minimizing curves
  $(\mu^n,\bnu^n)\in\CE_1(\mu_n,\mu)$. Fix a function
  $\varphi:\R^d\to\R$ bounded in $C^1$. Using the continuity equation
  in the form \eqref{eq:cedistributionrefined} and Lemma
  \ref{lem:integrability} we estimate:
  \begin{align*}
      \abs{\int\varphi \dd\mu_n-\int\varphi
        \dd\mu}~&=~\frac12\abs{\int_0^1\int\dgrad\varphi \dd\bnu^n_t\dd t}\\
      &\leq~\norm{\phi}_{C^1} \int_0^1\int\big(1\wedge\abs{x-y}\big)\abs{\bnu^n_t}(\dd x,\dd y)\dd t\\
      &\leq~\norm{\phi}_{C^1}C\int_0^1\sqrt{\cA(\mu^n_t,\bnu^n_t)}\dd t~=~\norm{\phi}_{C^1}C\cdot
      \cW(\mu_n,\mu)\ .
  \end{align*}
  This implies $\mu_n\rightharpoonup\mu$ weakly.

  The compactness assertion and lower semicontinuity of
  $\cW$ follow immediately from Proposition
  \ref{prop:cecompactness}. Let us now fix $\tau\in\cP(\R^d)$ and let
  $\bar\mu_0,\bar\mu_1\in \cP_\tau$. By the triangle inequality we
  have $\cW(\bar\mu_0,\bar\mu_1)<\infty$ and hence Proposition
  \ref{prop:minimizers} yields existence of minimizing curve
  $(\mu,\bnu)\in\CE_{1}(\bar\mu_0,\bar\mu_1)$. The curve
  $t\mapsto\mu_t$ is then a constant speed geodesic in $\cP_\tau$
  since it satisfies
  \begin{align*}
    \cW(\mu_s,\mu_t)~=~\int\limits_s^
    t\sqrt{\cA(\mu_r,\bnu_r)}\dd r~=~(t-s)\cW(\mu_0,\mu_1)\quad\forall
    0\leq s\leq t\leq 1\ .
  \end{align*}
  To show completeness let $(\mu^n)_n$ be a Cauchy sequence in
  $\cP_\tau$. In particular the sequence is bounded w.r.t. $\cW$ and
  we can find a subsequence (still indexed by $n$) and $\mu^\infty\in$
  such that $\mu^n\rightharpoonup^*\mu^\infty$. Invoking lower
  semicontinuity of $\cW$ and the Cauchy condition we infer
  $\cW(\mu^n,\mu^\infty)\to 0$ as $n\to\infty$ and $\mu^\infty\in
  \cP_\tau$.
\end{proof}

It is yet unclear when precisely the distance $\cW$ is
finite. However, we will see in the next section that the distance is
finite e.g. along trajectories of the semigroup associated to a
translation invariant jump kernel.

The following result shows that under certain assumptions the
distance $\cW$ can be bounded from below by the $L^1$-Wasserstein
distance. Recall that this distance is defined for
$\mu_0,\mu_1\in\cP(\R^d)$ by
\begin{align*}
  W_1(\mu_0,\mu_1)~:=~\inf\limits_\pi \int\limits_{\R^d\times\R^d}\abs{x-y}\pi(\dd x,\dd y)\ ,
\end{align*}
where the infimum is taken over all probability measures
$\pi\in\cP(\R^d\times\R^d)$ whose first and second marginal are
$\mu_0$ and $\mu_1$ respectively (see e.g. \cite[Chap. 6]{Vil09}).
\begin{proposition}\label{prop:lowerboundW}
  Assume that the jump kernel $J$ satisfies
  \begin{align}\label{eq:uniform-momentsJ}
    M^2~:=~\sup\limits_x\int\abs{x-y}^2J(x,\dd y)~<~\infty\ .
  \end{align}
  Then for any $\mu_0,\mu_1\in\cP(\R^d)$ we have the bound
  \begin{align*}
    W_1(\mu_0,\mu_1)~\leq~\frac{M}{\sqrt{2}}\cW(\mu_0,\mu_1)\ .
  \end{align*}
\end{proposition}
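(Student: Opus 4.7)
The plan is to argue by Kantorovich--Rubinstein duality and then exploit the continuity equation together with a Cauchy--Schwarz estimate modeled on the proof of Lemma \ref{lem:integrability}, but with the cutoff $1\wedge|x-y|$ replaced by $|x-y|$. The key ingredient that makes this replacement possible is precisely the new moment assumption \eqref{eq:uniform-momentsJ}.

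Concretely, I would begin by reducing to the case $\cW(\mu_0,\mu_1)<\infty$ (otherwise the inequality is trivial) and invoking Proposition \ref{prop:minimizers} to fix a minimizing pair $(\mu,\bnu)\in\CE_1(\mu_0,\mu_1)$. The heart of the argument is the pointwise-in-time bound
\begin{align*}
  \int_G |x-y|\,\dd|\bnu_t|(x,y) ~\leq~ M\sqrt{2}\,\sqrt{\cA(\mu_t,\bnu_t)}\ ,
\end{align*}
valid for a.e.\ $t\in[0,1]$. To establish it, I would choose a reference measure $\sigma\in\cM_{loc}(G)$ dominating $\mu_t^1,\mu_t^2,|\bnu_t|$, write $\mu_t^i=\rho^i\sigma$ and $\bnu_t=w\sigma$, and apply Cauchy--Schwarz in the factorisation $|w|=\sqrt{2\theta(\rho^1,\rho^2)}\cdot\sqrt{\a(w,\rho^1,\rho^2)}$ exactly as in the proof of Lemma \ref{lem:integrability}. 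The bound $\theta(s,t)\leq(s+t)/2$ from \eqref{eq:theta-arithmetic-ineq} together with \eqref{eq:uniform-momentsJ} then yields
\begin{align*}
  \int |x-y|^2\cdot 2\theta(\rho^1,\rho^2)\,\dd\sigma ~\leq~ \int|x-y|^2\,\dd(\mu_t^1+\mu_t^2) ~\leq~ 2M^2\ ,
\end{align*}
where the second step uses the symmetry of $|x-y|^2$ and the reversibility of $J$ (or simply a direct estimate against each of $\mu_t^1,\mu_t^2$).

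Integrating in $t$ and applying Cauchy--Schwarz gives $\int_0^1\int|x-y|\,\dd|\bnu_t|\,\dd t\leq M\sqrt{2}\,\cW(\mu_0,\mu_1)$. For any test function $\phi\in C^\infty_c(\R^d)$ with $\mathrm{Lip}(\phi)\leq1$ we have $|\dgrad\phi(x,y)|\leq|x-y|$, so Lemma \ref{lem:cont-represent} (with $t_0=0,\,t_1=1$ and $\varphi=\phi$) yields
\begin{align*}
  \Big|\int\phi\,\dd\mu_1-\int\phi\,\dd\mu_0\Big|
  ~=~\frac12\Big|\int_0^1\!\!\int\dgrad\phi\,\dd\bnu_t\,\dd t\Big|
  ~\leq~\frac{M}{\sqrt{2}}\,\cW(\mu_0,\mu_1)\ .
\end{align*}

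Taking the supremum over $\phi$ and invoking the Kantorovich--Rubinstein duality formula yields the asserted bound. The only real technicality is step five: the duality formula is naturally stated for bounded $1$-Lipschitz functions rather than $C^\infty_c$ ones, so I would need a brief approximation argument. I expect this to be routine: mollify a bounded $1$-Lipschitz $\phi$ and multiply by a smooth cutoff $\chi_R$ supported in $B_{2R}$, with $\chi_R=1$ on $B_R$ and $|\nabla\chi_R|\to 0$; the extra term arising from the cutoff has $\dgrad$-norm bounded by a constant times $|x-y|$ and vanishes as $R\to\infty$ by dominated convergence, using the global integrability $\int_0^1\int|x-y|\,\dd|\bnu_t|\,\dd t<\infty$ that we have just established. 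This approximation is the main (but minor) obstacle; once it is in place the inequality follows.
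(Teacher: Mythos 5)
Your proposal is correct and follows essentially the same route as the paper: pick a minimizing curve, bound $\int_0^1\int\abs{x-y}\,\dd\abs{\bnu_t}\,\dd t$ by the Cauchy--Schwarz factorisation of Lemma \ref{lem:integrability} with $1\wedge\abs{x-y}$ replaced by $\abs{x-y}$ (using \eqref{eq:theta-arithmetic-ineq} and \eqref{eq:uniform-momentsJ}), test the continuity equation \eqref{eq:cedistributionrefined} against $1$-Lipschitz functions, and conclude by Kantorovich--Rubinstein duality, with matching constant $M/\sqrt{2}$. The only difference is cosmetic: you perform Cauchy--Schwarz pointwise in $t$ and you spell out the mollification/cutoff step needed to pass from $C^\infty_c$ test functions to bounded $1$-Lipschitz ones, a detail the paper leaves implicit and which your dominated-convergence argument handles correctly.
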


\begin{proof}
  We can assume that $\cW(\mu_0,\mu_1)<\infty$. Take a minimizing
  curve $(\mu,\bnu)\in\CE_1(\mu_0,\mu_1)$ and let $\phi:\R^d\to\R$ be
  a $1$-Lipschitz function. Using the continuity equation in the for
  \eqref{eq:cedistributionrefined} and arguing similar as in Lemma
  \ref{lem:integrability} we estimate
  \begin{align*}
    &\abs{\int\varphi \dd\mu_n-\int\varphi \dd\mu}\\
    ~&=~\frac12\abs{\int_0^1\int\dgrad\varphi \dd\bnu^n_t\dd t}\\
    &\leq~\frac12\int_0^1\int\abs{x-y}\abs{\bnu^n_t}(\dd x,\dd y)\dd t\\
    &\leq~\frac{1}{\sqrt{2}}\left(\int_0^1\cA(\mu^n_t,\bnu^n_t)\dd t\right)^\frac12\left(\int_0^1\int\abs{x-y}^2J(x,\dd y)\mu_t(\dd x)\dd t\right)^\frac12\\
    &\leq~\frac{M}{\sqrt{2}}\cW(\mu_n,\mu)\ .
  \end{align*}
  Taking the supremum over all $1$-Lipschitz functions $\phi$ yields
  the claim by Kantorovich-Rubinstein duality (see
  e.g. \cite[5.16]{Vil09}).
\end{proof}

We now give a characterization of absolutely continuous curves with
respect to $\cW$ and relate their length to their minimal
action. Recall that a curve $(\mu_t)_{t\in[0,T]}$ in $\cP(\R^d)$ is
called absolutely continuous w.r.t. $\cW$ if there exists $m\in
L^1(0,T)$ such that
\begin{align}\label{eq:abs-continuous}
  \cW(\mu_s,\mu_t)~\leq~\int_s^tm(r)\dd r \quad\forall~0\leq s\leq t\leq
  T\ .
\end{align}
For an absolutely continuous curve the metric derivative defined by
\begin{equation*}
  \abs{\mu_t'}~:=~\lim\limits_{h\to0}\frac{\cW(\mu_{t+h},\mu_t)}{\abs{h}}
\end{equation*}
exists for a.e. $t\in[0,T]$ and is the minimal $m$ in
\eqref{eq:abs-continuous}.

\begin{proposition}[Metric velocity]\label{prop:metricderivative}
  A curve $(\mu_t)_{t\in[0,T]}$ is absolutely continuous with respect
  to $\cW$ if and only if there exists a Borel family
  $(\bnu_t)_{t\in[0,T]}$ such that $(\mu,\bnu)\in\CE_{T}$ and
  \begin{align*}
    \int_0^T\sqrt{\cA(\mu_t,\bnu_t)}dt~<~\infty\ .
  \end{align*}
  In this case we have $\abs{\mu_t'}^2\leq\cA(\mu_t,\bnu_t)$ for
  a.e. $t\in[0,T]$. Moreover, there exists a unique Borel family
  $\tilde{\bnu}_t$ with $(\mu,\tilde\bnu)\in\CE_{T}$ such that
  \begin{align}\label{eq:optimalvelocity}
    \abs{\mu_t'}^2=\cA(\mu_t,\tilde{\bnu}_t)\qquad \text{for a.e. }t\in[0,T]\ .
  \end{align}
\end{proposition}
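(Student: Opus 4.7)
The plan is to follow the classical strategy of \cite[Thm.~8.3.1]{AGS05} as adapted to the non-local setting in \cite{DNS09}, proceeding in three stages: the pointwise inequality $\abs{\mu_t'}^2 \leq \cA(\mu_t,\bnu_t)$, the construction of an optimal $\tilde\bnu$ by piecewise-geodesic approximation, and uniqueness via strict convexity.

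For the easy direction, suppose $(\mu,\bnu) \in \CE_T$ with $\int_0^T \sqrt{\cA(\mu_t,\bnu_t)}\,\dd t < \infty$. For $0 \leq s < t \leq T$ the restriction of $(\mu,\bnu)$ to $[s,t]$ lies in $\CE_{t-s}(\mu_s,\mu_t)$, so Lemma \ref{lem:equivcharacter} yields $\cW(\mu_s,\mu_t) \leq \int_s^t \sqrt{\cA(\mu_r,\bnu_r)}\,\dd r$. This shows $\mu$ is absolutely continuous with $r \mapsto \sqrt{\cA(\mu_r,\bnu_r)}$ admissible in \eqref{eq:abs-continuous}; dividing by $t-s$ at Lebesgue points yields $\abs{\mu_t'}^2 \leq \cA(\mu_t,\bnu_t)$ for a.e. $t$.

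For the construction, assume $\mu$ is absolutely continuous with $m(t) := \abs{\mu_t'}$; by a standard arclength reparametrization we may reduce to $m \in L^2(0,T)$. For each $N$, set $\tau_N = T/N$ and $t_i^N = i\tau_N$. Proposition \ref{prop:minimizers} provides minimizing pairs $(\hat\mu^{N,i},\hat\bnu^{N,i}) \in \CE_1(\mu_{t_i^N},\mu_{t_{i+1}^N})$ of constant action $\cW^2(\mu_{t_i^N},\mu_{t_{i+1}^N})$. Rescaling via $\mu^{N,i}_t = \hat\mu^{N,i}_{(t-t_i^N)/\tau_N}$ and $\bnu^{N,i}_t = \tau_N^{-1} \hat\bnu^{N,i}_{(t-t_i^N)/\tau_N}$ and concatenating yields $(\mu^N,\bnu^N) \in \CE_T$ with $\mu^N_{t_i^N} = \mu_{t_i^N}$ and, using the quadratic scaling $\cA(\mu,\lambda\bnu) = \lambda^2 \cA(\mu,\bnu)$,
\begin{align*}
\int_0^T \cA(\mu^N_t,\bnu^N_t)\,\dd t = \sum_{i=0}^{N-1} \frac{\cW^2(\mu_{t_i^N},\mu_{t_{i+1}^N})}{\tau_N} \leq \int_0^T m(r)^2\,\dd r,
\end{align*}
where the inequality uses absolute continuity and Cauchy--Schwarz on each subinterval. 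Proposition \ref{prop:cecompactness} then extracts a subsequence with $\bnu^N \rightharpoonup^* \tilde\bnu$ in $\cM_{loc}(G \times (0,T))$ and $\mu^N_t \rightharpoonup \mu^\infty_t$ for every $t$, with $(\mu^\infty,\tilde\bnu) \in \CE_T$. Since the nodal values $\mu^N_{t_i^N} = \mu_{t_i^N}$ are fixed and both $\mu$ and $\mu^\infty$ are weakly continuous, $\mu^\infty = \mu$. The lower semicontinuity in Proposition \ref{prop:cecompactness} together with the easy direction applied to $\tilde\bnu$ yields
\begin{align*}
\int_0^T \abs{\mu_t'}^2\,\dd t \leq \int_0^T \cA(\mu_t,\tilde\bnu_t)\,\dd t \leq \liminf_N \int_0^T \cA(\mu^N_t,\bnu^N_t)\,\dd t \leq \int_0^T \abs{\mu_t'}^2\,\dd t,
\end{align*}
so equality holds throughout and \eqref{eq:optimalvelocity} follows.

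For uniqueness, if $\tilde\bnu, \tilde\bnu'$ both realize \eqref{eq:optimalvelocity} then $(\mu, \tfrac12(\tilde\bnu+\tilde\bnu')) \in \CE_T$ and Lemma \ref{lem:convexityPhi} combined with the easy direction forces equality a.e. in the convexity inequality; strict convexity of $w \mapsto w^2/(2\theta(s,t))$ on $\{\theta > 0\}$ then yields $\tilde\bnu_t = \tilde\bnu'_t$ there, while on the complement both vanish by finiteness of the action as in Lemma \ref{lem:densities}. I expect the main obstacle to be the gluing step: using the matching endpoints $\hat\mu^{N,i}_1 = \mu_{t_{i+1}^N} = \hat\mu^{N,i+1}_0$ and Lemma \ref{lem:cont-represent}, one must carefully verify that the concatenated pair $(\mu^N,\bnu^N)$ genuinely satisfies the distributional continuity equation across the partition points, and that the preliminary reduction to $m \in L^2$ via arclength time-change (and the homogeneity of $\alpha$ in Lemma \ref{lem:actiondensity}) does not compromise the generality of the conclusion.
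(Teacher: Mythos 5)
Your proof is correct and takes essentially the same route as the paper, whose own ``proof'' is a one-line reference to \cite[Thm.~5.17]{DNS09}: that argument is exactly your combination of the reparametrization lemma for the easy direction, the piecewise-minimizer construction with rescaling, gluing via \eqref{eq:cedistributionrefined}, compactness and lower semicontinuity from Proposition \ref{prop:cecompactness}, and uniqueness from strict convexity of the action density. The technical points you flag (the gluing across partition nodes, the arclength time-change and its inversion, and identifying the limit curve at the nodes, most cleanly done with nested dyadic partitions or via the estimate $\cW(\mu^N_t,\mu_t)\to0$) are standard and do not affect the argument.
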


\begin{proof}
  The proof follows from the very same arguments as in
  \cite[Thm. 5.17]{DNS09}.
\end{proof}

We can describe the optimal velocity measures $\tilde\bnu_t$ appearing
in the preceding proposition in more detail.  We define
\begin{align}\label{eq:bnu-tangent}
T_\mu\cP(\R^d)~:=~\Big\{&\bnu\in\cM_{loc}(G)\ :\ \cA(\mu,\bnu)<\infty\;,\\\nonumber
&\cA(\mu,\bnu)~\leq~\cA(\mu,\bnu+\bbeta)\ \forall \bbeta\;:\;\dgrad\cdot\bbeta=0\Big\}\ .
\end{align}
Here $\dgrad\cdot\bbeta=0$ is understood in a weak sense, i.e.
\begin{align*}
\frac12\int\dgrad\xi(x,y)\bbeta(\dd x,\dd y)~=~0\qquad\forall\xi\in C^\infty_c(\R^d)\ .
\end{align*}

\begin{corollary}\label{cor:tangentspace}
  Let $(\mu,\bnu)\in\CE_{T}$ such that the curve $t\mapsto\mu_t$ is
  absolutely continuous w.r.t. $\cW$. Then $\bnu$ satisfies
  \eqref{eq:optimalvelocity} if and only if $\bnu_t\in
  T_{\mu_t}\cP(\R^d)$ for a.e. $t\in[0,T]$.
\end{corollary}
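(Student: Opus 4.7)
The plan is to pivot on Proposition~\ref{prop:metricderivative}, which furnishes a \emph{unique} Borel family $\tilde\bnu$ with $(\mu,\tilde\bnu)\in\CE_T$ satisfying $\cA(\mu_t,\tilde\bnu_t)=|\mu_t'|^2$ a.e. Both implications will be established by comparing the given $\bnu$ to this canonical $\tilde\bnu$.

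For the ``if'' direction, assume $\bnu_t\in T_{\mu_t}\cP(\R^d)$ for a.e.\ $t$. Since both $(\mu,\bnu)$ and $(\mu,\tilde\bnu)$ solve the distributional continuity equation, testing against $\phi(t,x)=\eta(t)\xi(x)$ for arbitrary $\eta\in C^\infty_c(0,T)$ and $\xi\in C^\infty_c(\R^d)$ yields $\int\dgrad\xi\,d(\bnu_t-\tilde\bnu_t)=0$ for a.e.\ $t$. A countable family of $\xi$'s dense in the norm $\|\xi\|_\infty+\Lip(\xi)$ — which controls $\xi\mapsto\int\dgrad\xi\,d\kappa$ via the pointwise bound underlying Lemma~\ref{lem:integrability} — then produces a common null set outside which $\bbeta_t:=\tilde\bnu_t-\bnu_t$ is divergence-free. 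The defining property of the tangent space gives $\cA(\mu_t,\bnu_t)\leq\cA(\mu_t,\bnu_t+\bbeta_t)=\cA(\mu_t,\tilde\bnu_t)=|\mu_t'|^2$, while Proposition~\ref{prop:metricderivative} provides the reverse bound $|\mu_t'|^2\leq\cA(\mu_t,\bnu_t)$, forcing equality~\eqref{eq:optimalvelocity}.

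For the ``only if'' direction, the uniqueness in Proposition~\ref{prop:metricderivative} forces $\bnu=\tilde\bnu$ a.e., so it suffices to show that $\tilde\bnu_t\in T_{\mu_t}\cP(\R^d)$ for a.e.\ $t$. Fix a divergence-free $\bbeta\in\cM_{loc}(G)$ of finite action together with a subinterval $(s_1,s_2)\subset[0,T]$. The perturbed family $\bnu''_t:=\tilde\bnu_t+\one_{(s_1,s_2)}(t)\bbeta$ still lies in $\CE_T(\mu_0,\mu_T)$, because switching on the perturbation in time produces no boundary contribution when $\bbeta$ is spatially divergence-free. Applying Proposition~\ref{prop:metricderivative} to $(\mu,\bnu'')$ and using the optimality of $\tilde\bnu$ yields $\int_{s_1}^{s_2}\cA(\mu_t,\tilde\bnu_t)\,dt\leq\int_{s_1}^{s_2}\cA(\mu_t,\tilde\bnu_t+\bbeta)\,dt$, and Lebesgue's differentiation theorem then gives the pointwise inequality for a.e.\ $t$, with a null set depending on $\bbeta$. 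The main obstacle is extracting a \emph{single} null set valid for all admissible $\bbeta$: this is handled by approximating a general finite-action divergence-free $\bbeta$ by elements of a countable dense subfamily with respect to the quadratic structure induced by $\cA(\mu_t,\tilde\bnu_t+\cdot\,)$, then invoking convexity (Lemma~\ref{lem:convexityPhi}) and lower semicontinuity (Lemma~\ref{lem:lscaction}) of $\cA$ to pass to the limit — following the template of \cite[Thm.~5.17]{DNS09} cited in the preceding proposition.
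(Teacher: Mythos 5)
Your ``if'' direction and the reduction of the ``only if'' direction are fine: comparing the two continuity equations through a countable $C^1$-dense family of test functions does give one null set off which $\bbeta_t:=\tilde\bnu_t-\bnu_t$ is divergence-free, and the chain $\cA(\mu_t,\bnu_t)\leq\cA(\mu_t,\tilde\bnu_t)=\abs{\mu_t'}^2\leq\cA(\mu_t,\bnu_t)$ closes (the first inequality also yields $\int_0^T\sqrt{\cA(\mu_t,\bnu_t)}\,\dd t\leq\int_0^T\abs{\mu_t'}\,\dd t<\infty$, which you should note before invoking Proposition \ref{prop:metricderivative} for the pair $(\mu,\bnu)$). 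Likewise, identifying $\bnu$ with the canonical optimal family via the uniqueness clause, and the perturbation $\bnu''_t=\tilde\bnu_t+\one_{(s_1,s_2)}(t)\bbeta$ giving, for each \emph{fixed} admissible $\bbeta$, the inequality $\cA(\mu_t,\tilde\bnu_t)\leq\cA(\mu_t,\tilde\bnu_t+\bbeta)$ for a.e.\ $t$, are correct (the paper gives no explicit proof of the corollary, so this is the natural route to attempt).

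The genuine gap is the final step you flag yourself: passing from a countable family of perturbations to all of them. First, the ``quadratic structure induced by $\cA(\mu_t,\tilde\bnu_t+\cdot)$'' is a $t$-dependent notion — for $\mu_t=\rho_t m$ it is an $L^2(\hat\rho_t Jm)$-type structure, and in general $\mu_t$ need not even be absolutely continuous — so a single countable family that is dense for a.e.\ $t$ simultaneously is neither exhibited nor obviously existent. Second, and more decisively, the limit passage you propose points the wrong way: from $\cA(\mu_t,\tilde\bnu_t)\leq\cA(\mu_t,\tilde\bnu_t+\bbeta_n)$ you would need $\limsup_n\cA(\mu_t,\tilde\bnu_t+\bbeta_n)\leq\cA(\mu_t,\tilde\bnu_t+\bbeta)$, i.e.\ convergence of the action along the approximating sequence, whereas Lemma \ref{lem:lscaction} only gives $\cA(\mu_t,\tilde\bnu_t+\bbeta)\leq\liminf_n\cA(\mu_t,\tilde\bnu_t+\bbeta_n)$ — the same direction as what you already have — and Lemma \ref{lem:convexityPhi} does not repair this. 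So the ``only if'' direction is not proved as written. The standard ways to close it are of a different nature: either use the construction behind \cite[Thm.~5.17]{DNS09} (cf.\ also \cite[Thm.~8.3.1]{AGS05}), where the optimal velocity family is obtained by a duality argument that places $\tilde\bnu_t$ in $T_{\mu_t}\cP(\R^d)$ by construction; or argue by projection: for a.e.\ $t$ let $\hat\bnu_t$ minimize $\cA(\mu_t,\cdot)$ over the affine set $\bnu_t+\{\bbeta:\dgrad\cdot\bbeta=0\}$ (existence by Lemma \ref{lem:integrability} and Lemma \ref{lem:lscaction}, uniqueness by strict convexity where $\theta>0$), show by a measurable-selection argument that $t\mapsto\hat\bnu_t$ may be chosen Borel, check $(\mu,\hat\bnu)\in\CE_T$ with $\cA(\mu_t,\hat\bnu_t)\leq\cA(\mu_t,\bnu_t)=\abs{\mu_t'}^2$, and conclude $\hat\bnu_t=\bnu_t$ a.e.\ from the uniqueness clause of Proposition \ref{prop:metricderivative}, whence tangency. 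The duality or measurable-selection ingredient is precisely what your sketch is missing.
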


In the light of the formal Riemannian interpretation of the distance
$\cW$ we view $T_\mu\cP(\R^d)$ as the tangent space to $\cP(\R^d)$ at
the measure $\mu$. If $\mu$ is absolutely continuous with respect to
$m$ we can give an explicit description of $T_\mu\cP(\R^d)$ as a
subspace of an $L^2$ space. For this recall that we denote by
$Jm\in\cM_{loc}(G)$ the measure given by $Jm(\dd x,\dd y)=J(x,\dd
y)m(\dd x)$.

\begin{proposition}\label{prop:tangentspace-density}
  Let $\mu=\rho m\in\cP(\R^d)$. Then we have $\bnu\in T_\mu\cP(\R^d)$
  if and only if $\bnu=w\hat\rho Jm$ is absolutely continuous
  w.r.t. the measure $\hat\rho Jm$ and
  \begin{align*}
    w~\in~ \overline{\{\dgrad\phi\ \vert\ \phi\in C^\infty_c(\R^d)\}}^{L^2(\hat\rho Jm)}~=:~T_\rho\ .
  \end{align*}
\end{proposition}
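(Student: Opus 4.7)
The plan is to reduce the constrained minimisation defining $T_\mu\cP(\R^d)$ to a standard Hilbert-space projection in $L^2(\hat\rho Jm)$.

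First I would use Lemma~\ref{lem:densities} to represent any $\bnu$ with $\cA(\mu,\bnu)<\infty$ as $\bnu=w\hat\rho Jm$ with $w\in L^2(\hat\rho Jm)$ and $\cA(\mu,\bnu)=\tfrac12\|w\|_{L^2(\hat\rho Jm)}^2$. The same lemma, applied to $\bnu+\bbeta$, shows that the only perturbations $\bbeta$ which can possibly decrease the action must be of the form $\bbeta=u\hat\rho Jm$ with $u\in L^2(\hat\rho Jm)$: any portion of $\bbeta$ carried by $\{\hat\rho=0\}$ sends $\cA(\mu,\bnu+\bbeta)$ to $+\infty$ by the very definition of $\a$, and in that case the inequality in \eqref{eq:bnu-tangent} holds trivially. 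This reduces the whole problem to minimisation in a Hilbert space.

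Second, I would verify that $\dgrad\phi\in L^2(\hat\rho Jm)$ for every $\phi\in C^\infty_c(\R^d)$, so that the closed subspace $T_\rho$ is well defined. The Lipschitz/boundedness estimate $|\dgrad\phi(x,y)|\le C(1\wedge|x-y|)$, combined with the arithmetic-mean bound \eqref{eq:theta-arithmetic-ineq}, the symmetry of $Jm$ with respect to $m$, the normalisation $\int\rho\,\dd m=1$, and Assumption~\ref{ass:standing}, gives the required finite bound on $\int|\dgrad\phi|^2\hat\rho\,\dd Jm$.

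The key identification is then almost tautological: unfolding the weak meaning of $\dgrad\cdot(u\hat\rho Jm)=0$ used in \eqref{eq:bnu-tangent} expresses this constraint as $\langle u,\dgrad\phi\rangle_{L^2(\hat\rho Jm)}=0$ for every $\phi\in C^\infty_c(\R^d)$, i.e.\ $u\in T_\rho^\perp$. Hence, after the reduction of Step~1, $\bnu=w\hat\rho Jm$ belongs to $T_\mu\cP(\R^d)$ if and only if
\[
\|w\|_{L^2(\hat\rho Jm)}^2\;\le\;\|w+u\|_{L^2(\hat\rho Jm)}^2\qquad\forall\,u\in T_\rho^\perp .
\]
A first-order variation (replace $u$ by $\lambda u$ and let $\lambda\to 0$) upgrades this to $w\perp T_\rho^\perp$, and since $T_\rho$ is closed by definition, Hilbert-space duality yields $w\in(T_\rho^\perp)^\perp=T_\rho$.

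The main obstacle is really the reduction carried out in the first step: one must carefully argue that it suffices to vary $\bnu$ over perturbations of the form $u\hat\rho Jm$ with $u\in L^2(\hat\rho Jm)$, rather than over arbitrary Radon measures on $G$. This is precisely where the infinite-value branch of the density $\a$ plays a decisive role. Once this is in place, Steps~2 and~3 are essentially a routine consequence of Hilbert-space duality and the standing assumptions on $J$ and $\theta$.
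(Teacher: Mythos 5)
Your proof is correct and follows essentially the same route as the paper: Lemma \ref{lem:densities} reduces both $\bnu$ and any admissible competitor $\bnu+\bbeta$ to densities in $L^2(\hat\rho Jm)$, the divergence-free constraint becomes orthogonality to $\{\dgrad\phi\,:\,\phi\in C^\infty_c(\R^d)\}$, and Hilbert-space projection gives $w\in(T_\rho^\perp)^\perp=T_\rho$. Your explicit justification of the reduction step (and of $\dgrad\phi\in L^2(\hat\rho Jm)$) fills in details the paper leaves implicit; only the parenthetical reason ``carried by $\{\hat\rho=0\}$'' is slightly imprecise, since the relevant point, supplied by Lemma \ref{lem:densities}, is that any part of $\bnu+\bbeta$ singular with respect to $\hat\rho Jm$ (or with non-square-integrable density) forces infinite action.
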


\begin{proof}
  If $\cA(\mu,\bnu)$ is finite we infer from Lemma \ref{lem:densities}
  that $\bnu=w\hat\rho Jm$ for some density $w:G\to\R$ and that
  $\cA(\mu,\bnu)=\norm{w}^2_{L^2(\hat\rho Jm)}$. Now the optimality
  condition in \eqref{eq:bnu-tangent} is equivalent to
  \begin{align*}
    \norm{w}_{L^2(\hat\rho Jm)}~\leq~\norm{w+v}_{L^2(\hat\rho Jm)}\qquad \forall v\in N_\rho\ ,
  \end{align*}
  where $N_\rho:=\{v\in L^2(\hat\rho Jm)\;:\;\int\dgrad\xi
  v\hat\rho\;\dd Jm = 0\ \forall \xi\in C^\infty_c(\R^d)\}$. This
  implies the assertion of the proposition after noting that $N_\rho$
  is the orthogonal complement in $L^2$ of $T_\rho$.
\end{proof}

The convexity and monotonicity properties of the action functional
established in Section \ref{sec:action} extend naturally to the
distance function.

\begin{proposition}[Convexity of the distance]\label{prop:convexitydistance}
  Let $\mu_0^j,\mu_1^j\in\cP(\R^d)$ for $j=0,1$. For $\tau\in[0,1]$
  and $k=0,1$ set $\mu_k^\tau=\tau\mu_k^1+(1-\tau)\mu_k^0$. Then we
  have :
 $$\cW(\mu^\tau_0,\mu^\tau_1)^2~\leq~\tau\cW(\mu^1_0,\mu^1_1)^2+(1-\tau)\cW(\mu^0_0,\mu^0_1)^2\ . $$
\end{proposition}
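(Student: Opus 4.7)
The plan is to lift the convexity of the action functional (Lemma \ref{lem:convexityPhi}) to the level of the distance by exploiting the linearity of the continuity equation. Without loss of generality I may assume both $\cW(\mu^0_0,\mu^0_1)$ and $\cW(\mu^1_0,\mu^1_1)$ are finite, since otherwise the right-hand side is $+\infty$ and there is nothing to prove.

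First, I would apply Proposition \ref{prop:minimizers} to obtain minimizing pairs $(\mu^j,\bnu^j)\in\CE_1(\mu^j_0,\mu^j_1)$ for $j=0,1$, so that
\begin{equation*}
\cW(\mu^j_0,\mu^j_1)^2~=~\int_0^1\cA(\mu^j_t,\bnu^j_t)\,\dd t\ .
\end{equation*}
Next, I would define the interpolated families
\begin{equation*}
\mu^\tau_t~:=~\tau\mu^1_t+(1-\tau)\mu^0_t\ ,\qquad \bnu^\tau_t~:=~\tau\bnu^1_t+(1-\tau)\bnu^0_t\ .
\end{equation*}
Since the continuity equation \eqref{eq:ce} is linear in $(\mu,\bnu)$, the pair $(\mu^\tau,\bnu^\tau)$ solves it in the sense of distributions. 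The weak continuity of $t\mapsto \mu^\tau_t$ and the boundary conditions $\mu^\tau_0=\tau\mu^1_0+(1-\tau)\mu^0_0$, $\mu^\tau_1=\tau\mu^1_1+(1-\tau)\mu^0_1$ are immediate, and the integrability condition \eqref{eq:cefinitemass} follows from convexity of the total variation together with the bound in Lemma \ref{lem:integrability}. Hence $(\mu^\tau,\bnu^\tau)\in\CE_1(\mu^\tau_0,\mu^\tau_1)$.

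Finally, applying Lemma \ref{lem:convexityPhi} pointwise in $t$ gives
\begin{equation*}
\cA(\mu^\tau_t,\bnu^\tau_t)~\leq~\tau\cA(\mu^1_t,\bnu^1_t)+(1-\tau)\cA(\mu^0_t,\bnu^0_t)\ ,
\end{equation*}
and integrating in $t$ together with the definition \eqref{eq:defmetric} of $\cW$ yields
\begin{equation*}
\cW(\mu^\tau_0,\mu^\tau_1)^2~\leq~\int_0^1\cA(\mu^\tau_t,\bnu^\tau_t)\,\dd t~\leq~\tau\cW(\mu^1_0,\mu^1_1)^2+(1-\tau)\cW(\mu^0_0,\mu^0_1)^2\ .
\end{equation*}

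There is no real obstacle here; the only thing requiring a moment of care is the verification that $(\mu^\tau,\bnu^\tau)$ genuinely lies in $\CE_1$, but this reduces to linearity of the distributional equation and the fact that convex combinations preserve the integrability bound \eqref{eq:cefinitemass}.
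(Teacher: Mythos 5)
Your proof is correct and follows essentially the same route as the paper: take minimizing curves from Proposition \ref{prop:minimizers}, form the convex combinations $(\mu^\tau,\bnu^\tau)$, check they lie in $\CE_1(\mu^\tau_0,\mu^\tau_1)$ by linearity of the continuity equation, and conclude via the convexity of $\cA$ from Lemma \ref{lem:convexityPhi} integrated in $t$. The only cosmetic difference is that you spell out the membership in $\CE_1$ (where subadditivity of the total variation already suffices for the integrability condition), which the paper leaves as an observation.
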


\begin{proof}
  We can assume that $\cW(\mu^j_0,\mu^j_1)$ is finite and choose
  minimizing curves
  $(\mu^j,\bnu^j)\in\CE_{1}(\mu^j_0,\mu^j_1)$. Then for
  $t\in[0,1]$ set $\mu_t^\tau=\tau\mu_t^1+(1-\tau)\mu_t^0$ and
  $\bnu_t^\tau=\tau\bnu_t^1+(1-\tau)\bnu_t^0$. Observe that
  $(\mu^\tau,\bnu^\tau)_t\in\CE_{1}(\mu^\tau_0,\mu^\tau_1)$. From
  the definition of $\cW$ and the convexity of $\cA$ as stated in
  Lemma \ref{lem:convexityPhi} we infer
  \begin{equation*}
    \begin{split}
      \cW(\mu^\tau_0,\mu^\tau_1)^2~&\leq~\int_0^1\cA(\mu_t^\tau,\bnu^\tau_t)\dd t~\leq~\int_0^1\tau\cA(\mu_t^1,\bnu^1_t)+(1-\tau)\cA(\mu_t^0,\bnu^0_t)\dd t\\
      &=~\tau\cW(\mu^1_0,\mu^1_1)^2+(1-\tau)\cW(\mu^0_0,\mu^0_1)^2\ .
    \end{split}
  \end{equation*}
 \end{proof}

 \begin{proposition}[Monotonicity under
   convolution]\label{prop:monotonconvolutionW}
   Let $\mu_0,\mu_1\in\cP(\R^d)$. Assume that $J$ satisfies
   \eqref{eq:translationinv-J} and let $m$ be Lebesgue measure. Let
   $k$ be a convolution kernel. Then we have
  \begin{align*}
    \cW(\mu_0*k,\mu_1*k)~\leq~\cW(\mu_0,\mu_1)\ .
  \end{align*}
  If we set $k_\eps(x)=\eps^{-d}k(x/\eps)$, then as
  $\eps\searrow 0$ we have
  \begin{align*}
    \cW(\mu_0*k_\eps,\mu_1*k_\eps) ~\longrightarrow~\cW(\mu_0,\mu_1)\ .
  \end{align*}
\end{proposition}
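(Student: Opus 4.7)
The plan is to lift the monotonicity of the action under convolution (Proposition \ref{prop:monotonconvolution}) to the level of the distance $\cW$, and then combine this monotonicity with weak lower semicontinuity of $\cW$ to deduce the convergence as $\eps \searrow 0$.

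First I would prove the inequality $\cW(\mu_0 * k,\mu_1 * k) \leq \cW(\mu_0,\mu_1)$, which we may assume to have a finite right hand side. Given a (nearly) minimizing curve $(\mu,\bnu)\in\CE_1(\mu_0,\mu_1)$, I introduce the convolved curve $\tilde\mu_t := \mu_t * k$ and $\tilde\bnu_t := \bnu_t * k$, the latter in the diagonal sense of \eqref{eq:convolutiondiagonal}. The two things to check are: (a) $(\tilde\mu,\tilde\bnu) \in \CE_1(\mu_0 * k,\mu_1 * k)$; and (b) $\cA(\tilde\mu_t,\tilde\bnu_t) \leq \cA(\mu_t,\bnu_t)$ for a.e.\ $t$. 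Assertion (b) is exactly Proposition \ref{prop:monotonconvolution}. Integrating in $t$ and optimizing over $(\mu,\bnu)$ yields the desired inequality.

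For the convergence statement, the upper bound $\limsup_{\eps \to 0}\cW(\mu_0 * k_\eps,\mu_1 * k_\eps) \leq \cW(\mu_0,\mu_1)$ is precisely the monotonicity inequality applied with $k=k_\eps$. The matching lower bound follows for free: since $k_\eps$ is an approximation of the identity, $\mu_i * k_\eps \rightharpoonup \mu_i$ weakly as $\eps \searrow 0$, and by Theorem \ref{thm:distance} the distance $\cW$ is weakly lower semicontinuous, so $\cW(\mu_0,\mu_1) \leq \liminf_{\eps \to 0}\cW(\mu_0 * k_\eps,\mu_1 * k_\eps)$.

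The only genuine technical content lies in verifying (a). The endpoint identities $\tilde\mu_0 = \mu_0 * k$, $\tilde\mu_1 = \mu_1 * k$ are clear, and weak continuity in $t$ passes through convolution. The integrability condition \eqref{eq:cefinitemass} for $\tilde\bnu_t$ follows from the total variation bound $|\tilde\bnu_t| \leq |\bnu_t|*k$ together with the invariance of $(x,y) \mapsto 1 \wedge |x-y|$ under the diagonal shift. The distributional continuity equation is the short calculation: for a test function $\phi \in C^\infty_c((0,1)\times\R^d)$, writing $\phi_z(t,x) := \phi(t,x+z)$ and using the identity $\dgrad\phi(x+z,y+z) = \dgrad\phi_z(x,y)$ together with Fubini, one reduces the equation for $(\tilde\mu,\tilde\bnu)$ to an average over $z$ (against $k(z)\,\dd z$) of the continuity equation for $(\mu,\bnu)$ tested against $\phi_z$, which holds by assumption. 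I expect this bookkeeping with the diagonal convolution to be the main point of care, but no genuine analytic obstacle; everything else is a direct appeal to prior results.
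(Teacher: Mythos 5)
Your proposal is correct and follows essentially the same route as the paper: convolve a minimizing curve $(\mu,\bnu)$ diagonally, verify it lies in $\CE_1(\mu_0*k,\mu_1*k)$ (the paper carries out the same test-function/Fubini computation for the continuity equation), invoke Proposition \ref{prop:monotonconvolution} for the action bound, and obtain the convergence as $\eps\searrow 0$ from the monotonicity combined with weak lower semicontinuity of $\cW$ from Theorem \ref{thm:distance}. Your remarks on the integrability condition via $\abs{\tilde\bnu_t}\leq\abs{\bnu_t}*k$ and the diagonal invariance of $1\wedge\abs{x-y}$ are a slightly more explicit version of what the paper dismisses as "equally easy to verify."
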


\begin{proof}
  Assume that $\cW(\mu_0,\mu_1)$ is finite, as otherwise there is
  nothing to proof. Let $(\mu,\bnu)\in\CE_{1}(\mu_0,\mu_1)$ be a
  minimizing curve according to Proposition
  \ref{prop:minimizers}. Define
  $\tilde\mu_t=\mu_t*k,\tilde\bnu_t=\bnu_t*k$. We claim that
  $(\tilde\mu,\tilde\bnu)\in\CE_{1}(\mu_0*k,\mu_1*k)$.
  Indeed, let us show that the continuity equation (v) in
  \eqref{eq:ce-conditions} holds for $(\tilde\mu,\tilde\bnu)$. The
  other properties are equally easy to verify. So let $\phi\in
  C^\infty_c((0,1)\times\R^d)$ and set
  $\tilde\phi(t,x)=\int\phi(t,x+z)k(z)\dd z$. Using the continuity
  equation for $(\mu,\bnu)$ and \eqref{eq:convolutiondiagonal} we
  obtain
  \begin{align*}
    \int\partial_t\phi\dd \tilde\mu_t\dd t~&=~\int\partial_t\phi(t,x+z)k(z)\dd z\mu_t(\dd x)\dd t\\
    &=~\int\partial_t\tilde\phi\dd\mu_t\dd t~=~-\frac12\int\dgrad\tilde\phi\dd\bnu_t\dd t\\
    &=~-\frac12\int\dgrad\phi(t,x+z,y+z)k(z)\bnu_t(\dd x,\dd y)\dd z\dd t\\
    &=~-\frac12\int \dgrad\phi\dd\tilde\bnu_t\dd t\ .
  \end{align*}
  Now the first assertion follows immediately from Proposition
  \ref{prop:monotonconvolution}. This in turn together with weak lower
  semicontinuity of $\cW$ (see Theorem \ref{thm:distance}) yields the
  second assertion.
\end{proof}

\section{Geodesic convexity and gradient flow of the entropy}\label{sec:gradflow}

In this section we focus on a translation invariant jump kernel $J$
and will identify the evolution equation \eqref{eq:general-evo} as the
gradient flow of the relative entropy in the framework of gradient
flows in metric spaces developed in \cite{AGS05}. So let us assume
from now on that $J$ satisfies
\begin{align*}
  J(x-z,A)~=~J(x,A+z)\qquad \forall x,z\in\R^d, A\in\cB(\R^d)
\end{align*}
and that $m$ is Lebesgue measure on $\R^d$. Moreover we assume that
$\theta$ is the logarithmic mean defined by \eqref{eq:log-mean}. Under
this assumptions we can write
\begin{align*}
  J(x,A)=\nu(A-x)\qquad \forall x\in\R^d\;,\ A\in\cB(\R^d)\ ,
\end{align*}
where $\nu$ is a L\'evy measure, i.e. a Borel measure on
$\R^d\setminus\{0\}$ satisfying
\begin{align*}
  \int(1\wedge\abs{y}^2)\;\nu(\dd y)~<~\infty\ .
\end{align*}
Now the evolution equation takes the form 
\begin{align*}
  \partial_t\rho~=~\cL\rho\ ,
\end{align*}
where the operator $\cL$ is given by
\begin{align*}
  \cL\rho(x)~:=~\int\big(\rho(x+y)-\rho(x)-y\cdot\nabla\rho(x)\one_{\{\abs{y}\leq1\}}\big)\nu(\dd y)\ .
\end{align*}
Note that $\cL$ is also the generator of the L\'evy process $X$ with
vanishing drift and diffusion and with L\'evy measure $\nu$ (see
e.g. \cite{App04} for background on L\'evy processes). It is a pseudo
differential operator whose symbol is given by the L\'evy-Khinchine
formula
\begin{align*}
  \eta(\xi)~=~\int e^{i\langle
    y,\xi\rangle}-1-i\langle y,\xi\rangle\one_{\{\abs{y}\leq
    1\}}\nu(\dd y)\ .
\end{align*}
This means that $\F(\cL\rho)=\eta\F(\rho)$, where $\F$ denotes the
Fourier transform. Recall that the law of $X_t$ can be given
explicitly in terms of its Fourier transformation. Namely, we have
\begin{align*}
  \E\big[\exp(i\langle\xi, X_t\rangle)\big]~=~\exp(t\eta(\xi))\ .
\end{align*}
Throughout this section we will make the following assumption on $\nu$
in terms of the law of the associated L\'evy process.
\begin{assumption}\label{ass:regularity}
  Assume that the law of the process $X_t$ has a density $\psi_t$ such
  that $\psi_t>0$ for all $t>0$. Moreover, assume that
  $\psi:(0,\infty)\times\R^d\to\R_+$ is such that $\psi_t,\cL\psi_t$
  are rapidly decreasing functions locally uniformly in $t$.
\end{assumption}
\begin{remark}\label{rem:regularity}
  This is a technical assumption made to simplified the
  presentation. It is used to ensure convergence of integrals in the
  proof of Theorem \ref{thm:EVIforJP} and could be weakened
  substantially. Still, Assumption \ref{ass:regularity} is fulfilled
  for example, when $\nu(\dd y)=c_\alpha\abs{y}^{-\alpha-d}$ for
  $\alpha\in(0,2)$. For a suitable constant $c_\a$ the L\'evy process
  $X$ is then the symmetric, isotropic $\alpha$-stable process and the
  symbol is given by $\eta(\xi)=\abs{\xi}^\alpha$.
\end{remark}
Recall that a smooth function $f:\R^d\to\R$ is called rapidly
decreasing if $\abs{x^\beta D^\alpha f(x)}\to 0$ as $\abs{x}\to\infty$
for any multi-indices $\a,\b$. We obtain a semigroup $(P_t)_{t\geq 0}$
on $\cP(\R^d)$ endowed with the distance $\cW$ by setting
\begin{align*}
  P_t[\mu]~:=~\mu*\psi_t\ .
\end{align*}
For $\bnu\in\cM(G)$ we set 
\begin{align*}
    P_t[\bnu]~:=~\bnu*\psi_t\ ,
\end{align*}
with the convolution being understood in the sense of
\eqref{eq:convolutiondiagonal}. Proposition
\ref{prop:monotonconvolutionW} shows that $P$ is a $C^0$-semigroup in
the sense that $P_t[\mu]\rightharpoonup\mu$ weakly as $t\to
0$. Moreover, $P_t[\mu]=\rho_tm$ is absolutely continuous
w.r.t. Lebesgue measure for any $\mu\in\cP(\R^d)$ and the density
$\rho_t$ satisfies $\partial_t\rho_t=\cL\rho_t$.

The notion of gradient flow can be defined in abstract metric spaces
and has been studied extensively in this setting (see
\cite{AGS05}). Of particular interest are gradient flows of
functionals that are geodesically (semi-) convex. In this situation
the gradient flow is characterized by the so called ``Evolution
Variational Inequality''(EVI). We adopt the following definition.

\begin{definition}\label{def:gradflow}
  Let $(X,d)$ be a metric space and $F:X\to (-\infty,\infty]$ a lower
  semicontinuous function. Further let $(S_t)_{t\geq 0}$ be a
  $C^0$-semigroup on $X$ and $\lambda\in\R$. $S$ is called the
  ($\lambda$-)gradient flow of $F$ if $S_t(X)\subset D(F)$ for all
  $t>0$, the map $t\mapsto F(S_t(u))$ is non-increasing for all $u\in
  X$ and if for all $u\in X,v\in D(F), t\geq 0$:
  \begin{align}\label{eq:EVI}
    \frac12\frac{\dd^+}{\dd t}d^2(S_t(u),v) + \frac{\lambda}{2}d^2(S_t(u),v) + F(S_t(u))~\leq~F(v)\ .
  \end{align}
\end{definition}
Here $D(F):=\{x\in X\ \vert\ F(x)<\infty\}$ denotes the proper domain
of the function $F$.

 We will apply this definition in the case where
$X=\cP(\R^d)$ and $F$ is the relative entropy $\cH$ defined for
$\mu\in\cP(\R^d)$ by
\begin{align*}
  \cH(\mu)~:=~
  \begin{cases}
    \int\rho\log\rho\; \dd m\ , & \text{if }\mu=\rho \dd m\ \text{and } \int(\rho\log\rho)_+\dd m<\infty\\
    +\infty\ , & \text{else.} 
  \end{cases}
\end{align*}
Let us start by stating a result giving the entropy production along
the semigroup $P$. As before, we will denote by $Jm\in\cM_{loc}(G)$
the measure given by $Jm(\dd x,\dd y)=J(x,\dd y)m(\dd x)$. For a
probability measure $\mu\in\cP(\R^d)$ we define a non-local analogue
of the Fisher information by
\begin{align}\label{eq:nonlocalFisher-information}
  \cI(\mu)~:=~
  \begin{cases}
    \frac12\int\dgrad\rho\dgrad\log\rho\; \dd(Jm) , & \text{if }\mu=\rho m \text{ and } \rho>0\ ,\\
  +\infty\ , & \text{else} \ .
  \end{cases}
\end{align}

\begin{proposition}\label{prop:entropy-dissipation}
  Let $\mu\in\cP(\R^d)$ and set $\mu_t=\rho_t m:=P_t[\mu]$. For every
  $t>0$ we have $\cH(\mu_t)\in(-\infty,\infty)$ and
  $\cI(\mu_t)<\infty$. Moreover, we have the energy identity
  \begin{align}\label{eq:entropy-dissipation}
    \cH(\mu_t) - \cH(\mu_s)~=~-\int_s^t\cI(\mu_r)\;\dd r\quad\forall  t\geq s>0\ .
  \end{align}
In particular the map $t\mapsto\cH(\mu_t)$ is non-increasing.
\end{proposition}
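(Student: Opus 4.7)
The plan is to exploit the regularity provided by Assumption \ref{ass:regularity} in order to differentiate $t\mapsto\cH(\mu_t)$ directly and then perform a non-local integration by parts against $\cL$.

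First I would collect the regularity properties of $\rho_t = \mu * \psi_t$ for $t>0$: it is smooth, strictly positive everywhere, bounded by $\|\psi_t\|_\infty$, satisfies $\partial_t\rho_t = \cL\rho_t$ pointwise, and together with $\cL\rho_t$ inherits sufficient decay at infinity from the rapid decay of $\psi_t$ and $\cL\psi_t$, locally uniformly in $t>0$. From this the finiteness of $\cH(\mu_t)$ follows: the positive part $(\rho_t\log\rho_t)_+$ is dominated by $\rho_t\,(\log\|\psi_t\|_\infty)_+\in L^1(m)$, whereas the negative part is controlled via the elementary inequality $-s\log s\le C_\eps s^{1-\eps}$ on $\{0<s\le 1\}$ combined with $\rho_t^{1-\eps}\in L^1$ (by interpolation between $\rho_t\in L^\infty$ and $\rho_t\in L^1$ and the tail decay of $\rho_t$).

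Next, the formal computation is
\begin{align*}
\ddt\cH(\mu_t) ~=~ \int(1+\log\rho_t)\,\cL\rho_t\,\dd m ~=~ -\tfrac12\int\dgrad\log\rho_t\,\dgrad\rho_t\,\dd(Jm) ~=~ -\cI(\mu_t)\,,
\end{align*}
where the middle step uses $\int\cL\rho_t\,\dd m=0$ together with the reversibility identity
\begin{align*}
\int f\,\cL g\,\dd m ~=~ -\tfrac12\int\dgrad f\,\dgrad g\,\dd(Jm)\,,
\end{align*}
which is a direct consequence of the symmetry of $J(x,\dd y)m(\dd x)$. The finiteness of the integrals on the middle side will simultaneously force $\cI(\mu_t)<\infty$ pointwise in $t>0$. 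Integrating from $s>0$ to $t$ then yields the energy identity \eqref{eq:entropy-dissipation}, and monotonicity of $t\mapsto\cH(\mu_t)$ is immediate from $\cI\ge 0$.

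The main obstacle is the rigorous justification of both the differentiation under the integral and the non-local integration by parts against the possibly unbounded test function $\log\rho_t$. My strategy would be a truncation argument: replace $\log\rho_t$ by the bounded Lipschitz function $(\log(\rho_t\vee\eps))\wedge M$, carry out the symmetrization using the L\'evy integrability $\int(1\wedge|y|^2)\,\nu(\dd y)<\infty$ together with the uniform bounds on $\rho_t$, $\nabla\rho_t$ and $\cL\rho_t$, and then remove the truncation by monotone convergence, exploiting nonnegativity of the integrand $(\rho_t(y)-\rho_t(x))(\log\rho_t(y)-\log\rho_t(x))$. The rapid decay of $\psi_t$ and $\cL\psi_t$ furnished by Assumption \ref{ass:regularity} is precisely what supplies the dominating functions needed at each stage and guarantees that no boundary contributions appear at infinity.
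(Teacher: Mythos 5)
Your proposal follows essentially the same route as the paper: the paper likewise truncates the logarithm (via $f_n'(u)=\max(1+\log u,-n)$, the analogue of your $(\log(\rho_t\vee\eps))\wedge M$), integrates by parts using the symmetry of $J(x,\dd y)m(\dd x)$ with the decay of $\cL\rho_r$ from Assumption \ref{ass:regularity} justifying the interchanges, and removes the truncation by monotone convergence based on the nonnegativity of $\dgrad\rho_r\,\dgrad\log\rho_r$. One small caveat: your bound for the negative part of $\cH(\mu_t)$ is imprecise as stated (interpolation between $L^1$ and $L^\infty$ does not give $\rho_t^{1-\eps}\in L^1$; one needs tail control on $\rho_t$, hence on $\mu$), but the paper itself dispatches this point with a one-line remark, so this does not distinguish your argument from the published one.
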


\begin{proof}
  Finiteness of $\cH(\mu_t)$ follows readily from the fact that
  $\psi_t$ is rapidly decreasing. We prove
  \eqref{eq:entropy-dissipation} by approximating $\cH$ with
  functionals $\cH_n$. Let us set
  \begin{align}\label{eq:approx-log}
    f_n(u):=\int_0^u\max(1+\log(r),-n)\;\dd r\ .
  \end{align}
  Then we have $f_n(u)\searrow u\log(u)$ and $f_n'(u)\searrow
  1+\log(u)$ as $n\to\infty$. For $\mu=\rho m\in\cP(\R^d)$ we set
  $\cH_n(\mu):=\int f_n(\rho)\dd m$. Now we calculate
  \begin{align*}
    \cH_n(\mu_t) - \cH_n(\mu_s)~&=~\int f_n(\rho_t)-f_n(\rho_s)\;\dd m\\
    &=~\int\int_s^t f_n'(\rho_r)\partial_r\rho_r\;\dd r\dd m~=~\int\int_s^t f_n'(\rho_r)\cL\rho_r\;\dd r\dd m\\
    &=~\frac12\int_s^t\int\dgrad f_n'(\rho_r)\dgrad\rho_r\;\dd(Jm)\dd r \ .
  \end{align*}
  The interchange of integrals and integration by parts are easily
  justified by the fact that $f_n'(\rho_r)$ is bounded and $\cL\rho_r$
  is rapidly decreasing locally uniformly in $r$. Letting finally
  $n\to\infty$ we obtain \eqref{eq:entropy-dissipation} by monotone
  convergence of both the left and right hand sides.
\end{proof}

We will now show that the semigroup $(P_t)$ is the gradient flow of
the relative entropy with respect to the distance $\cW$. Our strategy
of proof is inspired by an argument developed in \cite{DS08} and used
in a similar form in \cite[Thm. 5.29]{DNS09}. Recall that $\cW$ is a
pseudo distance, thus it is necessary to consider the sets
$\cP_\tau:=\{\mu\in\cP(\R^d) : \cW(\mu,\tau)<\infty\}$ for a
given $\tau\in\cP(\R^d)$. The following two results are a restatement
of Theorem \ref{thm:2-gradflow}.

\begin{theorem}\label{thm:EVIforJP}
  Let $\mu\in\cP(\R^d)$ and set $\mu_t:=P_t[\mu]$. Then $\mu_t\in
  D(\cH)\cap\cP_\mu$ for all $t>0$ and the map $t\mapsto\cH(\mu_t)$ is
  non-increasing. Moreover, for any $\sigma\in\cP_\mu$ the
  Evolution Variational Inequality holds:
  \begin{align}\label{eq:EVI0}
    \frac12\frac{\dd^+}{\dd t}\cW^2(\mu_t,\sigma) + \cH(\mu_t)~\leq~\cH(\sigma)\quad\forall t>0\ .
  \end{align}
\end{theorem}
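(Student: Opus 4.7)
The overall plan is to follow the Daneri--Savar\'e strategy of \cite{DS08,DNS09}: construct a candidate curve from $\mu_{t+h}$ to $\sigma$ by perturbing a minimising $\cW$-geodesic from $\mu_t$ to $\sigma$ by the time-reversed action of the semigroup, control its action via the convolution monotonicity of $\cA$ (Proposition \ref{prop:monotonconvolution}), and combine this with the entropy dissipation along the candidate curve to close the inequality. The preliminary claims are comparatively mild: finiteness of $\cH(\mu_t)$ for $t>0$ and the non-increase of $t\mapsto\cH(\mu_t)$ are immediate from Proposition \ref{prop:entropy-dissipation} combined with the fact that $\rho_t=\rho*\psi_t$ is rapidly decreasing under Assumption \ref{ass:regularity}, while $\mu_t\in\cP_\mu$ can be extracted from the heat-flow curve $(P_s\mu)_{s\in(0,t]}$, whose natural velocity $-\dgrad\rho_s\cdot Jm$ realises an action equal to the non-local Fisher information $\cI(P_s\mu)$; the energy identity $\int_\eps^t\cI(P_s\mu)\,\dd s=\cH(P_\eps\mu)-\cH(\mu_t)$ together with a Cauchy--Schwarz estimate on $\int_0^t\sqrt{\cI(P_s\mu)}\,\dd s$ (using Assumption \ref{ass:regularity} to control small $s$) yields $\cW(\mu,\mu_t)<\infty$.

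For the EVI itself, fix $t>0$ and $\sigma\in\cP_\mu$; then $\cW(\mu_t,\sigma)<\infty$ by the triangle inequality and Proposition \ref{prop:minimizers} furnishes a minimising $(\xi_s,\bnu^\xi_s)_{s\in[0,1]}\in\CE_1(\mu_t,\sigma)$. For small $h>0$ I set $\omega^h_s:=P_{(1-s)h}[\xi_s]=\xi_s*\psi_{(1-s)h}$, which connects $\omega^h_0=\mu_{t+h}$ to $\omega^h_1=\sigma$. A direct computation using $\partial_u\psi_u=\cL\psi_u$, the continuity equation for $(\xi,\bnu^\xi)$, and the translation invariance of $J$ shows that $(\omega^h,\bnu^h)\in\CE_1(\mu_{t+h},\sigma)$ with
\[
  \bnu^h_s~=~\bnu^{1,h}_s+h\,\dgrad\rho^h_s\cdot Jm,\qquad \bnu^{1,h}_s:=\bnu^\xi_s*\psi_{(1-s)h},
\]
where $\rho^h_s$ is the density of $\omega^h_s$. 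The log-mean identity $\dgrad\rho^h_s=\hat\rho^h_s\dgrad\log\rho^h_s$ together with Lemma \ref{lem:densities} applied to $\bnu^{1,h}_s$ (whose action is finite by convolution monotonicity) allow one to write $\bnu^h_s$ as having density $w^{1,h}_s+h\dgrad\log\rho^h_s$ with respect to $\hat\rho^h_s\,Jm$.

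Two computations then close the proof. Expanding the square defining $\cA$ and applying Proposition \ref{prop:monotonconvolution} to the geodesic piece gives
\[
  \cA(\omega^h_s,\bnu^h_s)~\leq~\cA(\xi_s,\bnu^\xi_s)+h\!\int\dgrad\log\rho^h_s\,\dd\bnu^{1,h}_s+h^2\cI(\omega^h_s).
\]
Independently, testing the continuity equation for $(\omega^h,\bnu^h)$ against $\log\rho^h_s$ produces $\tfrac{\dd}{\dd s}\cH(\omega^h_s)=\tfrac12\int\dgrad\log\rho^h_s\,\dd\bnu^h_s$, and splitting $\bnu^h_s$ and integrating over $s$ yields
\[
  \tfrac12\!\int_0^1\!\!\int\dgrad\log\rho^h_s\,\dd\bnu^{1,h}_s\,\dd s~=~\cH(\sigma)-\cH(\mu_{t+h})-h\!\int_0^1\!\cI(\omega^h_s)\,\dd s.
\]
Plugging this into the integrated action bound and using $\cW^2(\mu_{t+h},\sigma)\le\int_0^1\cA(\omega^h_s,\bnu^h_s)\,\dd s$ together with $\cW^2(\mu_t,\sigma)=\int_0^1\cA(\xi_s,\bnu^\xi_s)\,\dd s$ produces
\[
  \cW^2(\mu_{t+h},\sigma)~\leq~\cW^2(\mu_t,\sigma)+2h\bigl[\cH(\sigma)-\cH(\mu_{t+h})\bigr]-h^2\!\int_0^1\!\cI(\omega^h_s)\,\dd s.
\]
Dropping the non-positive $h^2$-term, dividing by $2h$ and letting $h\downarrow 0$---using the continuity of $t\mapsto\cH(\mu_t)$ from Proposition \ref{prop:entropy-dissipation}---gives the desired EVI $\tfrac12\ddtr\cW^2(\mu_t,\sigma)+\cH(\mu_t)\le\cH(\sigma)$.

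The main technical obstacle I foresee is the rigorous justification of these formal manipulations: admissibility of $\log\rho^h_s$ as a test function in the continuity equation, legitimacy of the density decomposition for $\bnu^h_s$, and finiteness and $h$-continuity of the integrals $\int_0^1\cI(\omega^h_s)\,\dd s$. Assumption \ref{ass:regularity} on $\psi_u$ and $\cL\psi_u$ is tailored precisely to make the densities $\rho^h_s$ smooth, strictly positive and rapidly decreasing so that these estimates are available. Most likely one first regularises the geodesic $\xi$ by mollification $\xi*k_\delta$, carries out the above argument for the smoothed object, and then passes to the limit $\delta\to 0$ using the lower semicontinuity of $\cA$ (Lemma \ref{lem:lscaction}) and the convolution contractivity for $\cW$ (Proposition \ref{prop:monotonconvolutionW}).
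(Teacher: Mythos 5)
Your scheme is essentially the one the paper uses: smooth a minimizing curve between $\mu_t$ (resp.\ $\mu$) and $\sigma$ by the semigroup with a linearly varying parameter, correct the velocity by a multiple of $\dgrad\rho=\hat\rho\,\dgrad\log\rho$, expand the action, control the convolved geodesic part by Proposition \ref{prop:monotonconvolution}, and pair the cross term with an entropy--dissipation identity before dividing by $h$; the algebra in your three displays is consistent with the paper's. The substantive difference is that you take the smoothing parameter $(1-s)h$, which \emph{vanishes} at the $\sigma$-endpoint, and this is where a genuine gap appears. All your ``technical obstacles'' hinge on $\int_0^1\cI(\omega^h_s)\,\dd s<\infty$: without it the corrected pair $(\omega^h,\bnu^h)$ need not have finite action, so it is useless as a competitor for $\cW^2(\mu_{t+h},\sigma)$ (note your final display would then have $-\infty$ on the right), and the truncation/dominated-convergence argument needed to legitimise testing the continuity equation with $\log\rho^h_s$ and splitting $\bnu^h_s$ collapses into an $\infty-\infty$ manipulation. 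The only bound available for a general $\sigma\in\cP_\mu\cap D(\cH)$ is the convexity estimate $\cI(\xi_s*\psi_{(1-s)h})\le\cI(\psi_{(1-s)h}m)$, and this integrates to $\tfrac1h\int_0^h\cI(\psi_u m)\,\dd u$, which is typically infinite: for the $\alpha$-stable kernel one has $\cI(\psi_u m)=\tfrac{d}{\alpha u}$ by scaling and Proposition \ref{prop:entropy-dissipation}. So near $s=1$ you have no control, since $\sigma$ need not have finite (non-local) Fisher information.

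Your proposed repair---mollify the geodesic by $\xi*k_\delta$ and pass to the limit---does not close this gap: convolution with a generic kernel only gives $\cI(\xi_s*k_\delta)\le\cI(k_\delta m)$, which is infinite for, e.g., compactly supported mollifiers, so it does not restore integrability of $\cI$ at the $\sigma$-end. The regularisation has to be performed with the semigroup itself, kept at a strictly positive parameter along the \emph{whole} curve. This is exactly the paper's device: it interpolates with $P_{st+\eps}$ (smoothing at least $\eps$ everywhere), which yields $\int_0^1\cI(\mu^\eps_{s,t})\,\dd s\le\tfrac1t\big(\cH(\psi_\eps m)-\cH(\psi_{\eps+t}m)\big)<\infty$ and hence membership in $\CE_1$ (condition (iv) of Definition \ref{def:ce} via Lemma \ref{lem:integrability}) and the entropy identity of Claim \ref{cl:EVIforJP}; the price is that the inequality is first obtained between $\sigma_\eps$ and $\mu_{t+\eps}$, and $\eps$ is then removed using the weak lower semicontinuity of $\cW$ (Theorem \ref{thm:distance}) and continuity of $\cH$ along the semigroup. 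If you insert this extra $\eps$ (and note the trivial reduction to $\cH(\sigma)<\infty$), your argument becomes the paper's proof up to the relabelling $s\mapsto 1-s$ and proving the inequality at $t=0$ with initial datum $\mu_t$.
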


\begin{proof}
  The first statement is a direct consequence of Proposition
  \ref{prop:entropy-dissipation}. For the second statement it is
  sufficient to assume $\mu\in D(\cH)$ and prove the inequality at
  $t=0$. So let $\sigma\in D(\cH)$ and let
  $(\mu_s,\bnu_s)_{s\in[0,1]}$ be a minimizing curve $\mu_0:=\sigma$
  to $\mu_1:=\mu$. We set
  \begin{align*}
  \mu^\eps_{s,t}~&=~\rho^\eps_{s,t}m~:=~P_{st+\eps}[\mu_s] \quad\text{and}\\
  \tilde\bnu^\eps_{s,t}~&=~\tilde v^\eps_{s,t}Jm~:=~P_{st+\eps}[\bnu_s]\ . 
  \end{align*}
  The couple $(\mu^\eps_{s,t},\tilde\bnu^\eps_{s,t})$ does not satisfy
  the continuity equation. Hence we make the correction
  \begin{align*}
    \bnu^\eps_{s,t}=v^\eps_{s,t}Jm:=(\tilde v^\eps_{s,t}-t\dgrad\rho^\eps_{s,t})Jm\ .
  \end{align*}
  We will need the following result whose proof we
  postpone for the moment.
  \begin{claim}\label{cl:EVIforJP}
    We have
    $(\mu^\eps_{\cdot,t},\bnu^\eps_{\cdot,t})\in\CE_1(\sigma_\eps,\mu_{\eps+t})$
    and moreover,
    \begin{align}\label{eq:entropy-dissipation-special}
      \cH(\mu_{\eps+t}) -
      \cH(\mu_\eps)~=~-\frac12\int_0^1\int\dgrad\log\rho^\eps_{s,t}v^\eps_{s,t}\;\dd(Jm)\;\dd s\ .
    \end{align}
  \end{claim}
  From the definition of the distance $\cW$ we now obtain the estimate
  \begin{align}\label{eq:EVIforJP-1}
    \cW(\mu_{t+\eps},\sigma_\eps)^2~\leq~\int_0^1\cA(\mu^\eps_{s,t},\bnu^\eps_{s,t})\;\dd s\ .
  \end{align}
  Recall the notation $\hat\rho(x,y)=\theta(\rho(x),\rho(y))$ with
  $\theta$ being the logarithmic mean here. We can further estimate
  \begin{align*}
    \cA(\mu^\eps_{s,t},\bnu^\eps_{s,t})~&=~\int\frac{\abs{v^{\eps}_{s,t}}^2}{2\hat\rho^\eps_{s,t}}\;\dd(Jm)\\
    &=~\int\big(\abs{\tilde v^{\eps}_{s,t}}^2-2t\dgrad\rho^\eps_{s,t}v^\eps_{s,t}-t^2\abs{\dgrad\rho^\eps_{s,t}}^2\big)  \frac{1}{2\hat\rho^\eps_{s,t}}\;\dd(Jm)\\
    &\leq~\cA(\mu^\eps_{s,t},\tilde\bnu^\eps_{s,t}) - t \int\dgrad\log\rho^\eps_{s,t}v^\eps_{s,t}\;\dd(Jm)\\
    &\leq~\cA(\mu_s,\bnu_s) - t \int\dgrad\log\rho^\eps_{s,t}v^\eps_{s,t}\;\dd(Jm)\ ,
  \end{align*}
  where we have dropped the quadratic term in $t$ and used the
  monotonicity under convolution (Proposition
  \ref{prop:monotonconvolution}) in the last inequality. Integration
  over $s$ from $0$ to $1$ and using
  \eqref{eq:entropy-dissipation-special} gives
 \begin{align*}
   \frac12\cW(\mu_{t+\eps},\sigma_\eps)^2~\leq~\frac12\cW(\mu,\sigma)^2 - t\cdot\big(\cH(\mu_{t+\eps})-\cH(\sigma_\eps)\big)\ .
 \end{align*}
 By lower semicontinuity of $\cW$ (see Theorem \ref{thm:distance}) and
 continuity of $\cH$ along the semigroup we can take the limit
 $\eps\to 0$ and obtain
 \begin{align*}
   \frac12\cW(\mu_{t},\sigma)^2~\leq~\frac12\cW(\mu,\sigma)^2 - t\cdot\big(\cH(\mu_{t})-\cH(\sigma)\big)\ .
 \end{align*}
 Finally, rearranging terms and letting $t\searrow 0$ yields
 \eqref{eq:EVI0}.
 \begin{proof}[Proof of Claim \ref{cl:EVIforJP}]
   For the proof we first need two estimates. First note that 
   \begin{align}\label{eq:finite-I}
     \int_0^1\cI(\mu^\eps_{s,t})\;\dd s < \infty\ .
   \end{align}
   Indeed, by convexity of the map $(u,v)\mapsto (u-v)(\log u-\log v)$
   we have that $\cI(\mu*\psi_t)\leq\cI(\psi_tm)$ for every
   $\mu\in\cP(\R^d)$. Hence we conclude from Proposition
   \ref{prop:entropy-dissipation} that
   \begin{align*}
    \int_0^1\cI(\mu^\eps_{s,t})\;\dd s~\leq~\int_0^1\cI(\psi_{\eps+st}m)\;\dd s~=~\cH(\psi_{\eps}m)-\cH(\psi_{\eps+t}m)~<~\infty\ .
   \end{align*}
   From this we conclude that the curve
   $(\mu^\eps_{\cdot,t},\bnu^\eps_{\cdot.t})$ has finite
   action. Indeed,
   \begin{align*}
     A~&:=~\int_0^1\int\frac{\abs{v^\eps_{s,t}}^2}{2\hat\rho^\eps_{s,t}}\;\dd(Jm)\;\dd s~\\
       &\leq~\int_0^1\int 2\frac{\abs{\tilde v^\eps_{s,t}}^2}{2\hat\rho^\eps_{s,t}}+ 2t^2\frac{\abs{\dgrad\rho^\eps_{s,t}}^2}{2\hat\rho^\eps_{s,t}}\;\dd(Jm)\;\dd s\\
    &\leq~2\int_0^1\cA(\mu_s,\bnu_s)\dd s + 2t^2\int_0^1\cI(\mu^\eps_{s,t})\;\dd s ~<~\infty\ ,
   \end{align*}
   where we use Proposition \ref{prop:monotonconvolution} in the last
   inequality. Using Lemma \ref{lem:integrability} and the previous
   estimate we see that $\bnu^\eps_{\cdot,t}$ satisfies the
   integrability condition (iv) in Definition \ref{def:ce}. The other
   conditions are also easily checked. Hence we see
   $(\mu^\eps_{\cdot,t},\bnu^\eps_{\cdot,t})\in\CE_1(\sigma_\eps,\mu_{\eps+t})$.

   Now let us prove \eqref{eq:entropy-dissipation-special}. By a
   simple convolution argument we can assume that $\rho^\eps_{s,t}$ is
   differentiable in $s$. Let $f_n$ be the function defined by
   \eqref{eq:approx-log} and set $f(u)=u\log(u)$ for $u\geq0$. Now we
   calculate
   \begin{align*}
    \cH_n(\mu_{\eps +t}) - \cH_n(\mu_{\eps})~=~\int\int_0^1 f_n'(\rho^\eps_{s,t})\partial_s\rho^\eps_{s,t}\;\dd s\;\dd m\ .
  \end{align*}
  Note that the map $x\mapsto f_n'(\rho^\eps_{s,t}(x))$ is bounded and
  Lipschitz uniformly in $s\in[0,1]$. Using the integrability
  condition (iv) from Definition \ref{def:ce} we can approximate it by
  functions in $C^\infty_c((0,1)\times\R^d)$ and obtain by the
  continuity equation
  \begin{align}\label{eq:entropy-dissipation-calc}
    \cH_n(\mu_{\eps +t}) - \cH_n(\mu_{\eps})~=~-\frac12 \int_0^1\int\dgrad f_n'(\rho^\eps_{s,t})v^\eps_{s,t}\dd(Jm)\dd s \ .
  \end{align}
  By monotone convergence the left hand side of
  \eqref{eq:entropy-dissipation-calc} converges to the left hand side
  of \eqref{eq:entropy-dissipation-special}. It remains to prove
  convergence of the right hand side. Using H\"older inequality we
  estimate
  \begin{align*}
    &\abs{\int_0^1\int\dgrad (f'(\rho^\eps_{s,t})- f_n'(\rho^\eps_{s,t}))\dd\bnu^\eps_{s,t}\dd s}\\
    &\leq~\int_0^1\int\abs{\dgrad(f'(\rho^\eps_{s,t})-f_n'(\rho^\eps_{s,t}))}\abs{w^\eps_{s,t}}\dd(Jm)\dd s\\
    &\leq~A^{\frac12}\left(\int_0^1\int \abs{\dgrad(f'(\rho^\eps_{s,t})- f_n'(\rho^\eps_{s,t}))}^22\hat\rho^\eps_{s,t}\dd(Jm)\dd s\right)^{\frac12}\ .
  \end{align*}
 The integrand in the last term is bounded as
 \begin{align*}
   \abs{\dgrad(f'(\rho^\eps_{s,t})-f_n'(\rho^\eps_{s,t}))}^2\hat\rho^\eps_{s,t}~\leq~\abs{\dgrad f'(\rho^\eps_{s,t})}^2\hat\rho^\eps_{s,t}~=~\dgrad\log\rho^\eps_{s,t}\dgrad\rho^\eps_{s,t}\ .
 \end{align*}
 With the help of \eqref{eq:finite-I} and dominated convergence we
 conclude convergence of the right hand side of
 \eqref{eq:entropy-dissipation-calc} to the right hand side of
 \eqref{eq:entropy-dissipation-special}.
 \end{proof}
\end{proof}

\begin{corollary}\label{cor:convex-entropy}
  The entropy is convex along $\cW$-geodesics. More precisely, let
  $\mu_0,\mu_1\in\cP(\R^d)$ such that $\cW(\mu_0,\mu_1)<\infty$ and
  let $(\mu_t)_{t\in[0,1]}$ be a geodesic connecting $\mu_0$ and
  $\mu_1$. Then we have
  \begin{align*}
    \cH(\mu_t)~\leq~(1-t)\cH(\mu_0) + t \cH(\mu_1)\ .
  \end{align*}
\end{corollary}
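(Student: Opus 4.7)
The plan is to derive geodesic convexity directly from the $\mathrm{EVI}$ established in Theorem \ref{thm:EVIforJP}, following the standard strategy for $\mathrm{EVI}_0$-gradient flows (cf.\ \cite{DS08}).

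We may assume $\cH(\mu_0),\cH(\mu_1)<\infty$, otherwise the inequality is trivial. Fix $t\in(0,1)$. Since $\cW(\mu_0,\mu_1)<\infty$, the triangle inequality gives $\mu_0,\mu_1\in\cP_{\mu_t}$, so Theorem \ref{thm:EVIforJP} applies with source $\mu_t$ and reference measures $\mu_0$ and $\mu_1$. Integrating the $\mathrm{EVI}$ in the time variable $r$ from $\eps>0$ to $\delta>\eps$ and then sending $\eps\to 0$ (using lower semicontinuity of $\cW$ together with the upper bound $\cW(P_\eps\mu_t,\mu_i)\leq \cW(P_\eps\mu_t,P_\eps\mu_i)+\cW(P_\eps\mu_i,\mu_i)\to\cW(\mu_t,\mu_i)$ provided by Proposition \ref{prop:monotonconvolutionW}, to get convergence of the squared distances), I obtain, for $i=0,1$,
\begin{equation*}
\tfrac12 \cW^2(P_\delta\mu_t,\mu_i)-\tfrac12 \cW^2(\mu_t,\mu_i)+\int_0^\delta \cH(P_r\mu_t)\,\dd r~\leq~\delta\,\cH(\mu_i).
\end{equation*}

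Next, I take the convex combination of these two inequalities with weights $1-t$ and $t$. The key observation is the elementary identity
\begin{equation*}
(1-t)a^2+tb^2~=~t(1-t)(a+b)^2+\big((1-t)a-tb\big)^2~\geq~t(1-t)c^2
\end{equation*}
valid for any $a,b\geq 0$ and any $c\leq a+b$. Applied in $(\cP,\cW)$ with $a=\cW(\nu,\mu_0)$, $b=\cW(\nu,\mu_1)$, $c=\cW(\mu_0,\mu_1)=:L$, this yields $(1-t)\cW^2(\nu,\mu_0)+t\cW^2(\nu,\mu_1)\geq t(1-t)L^2$ for every $\nu\in\cP(\R^d)$. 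On the geodesic, $\cW(\mu_t,\mu_0)=tL$ and $\cW(\mu_t,\mu_1)=(1-t)L$, so equality is attained at $\nu=\mu_t$. Consequently the bracket
\begin{equation*}
(1-t)\cW^2(P_\delta\mu_t,\mu_0)+t\cW^2(P_\delta\mu_t,\mu_1)~-~\big[(1-t)\cW^2(\mu_t,\mu_0)+t\cW^2(\mu_t,\mu_1)\big]
\end{equation*}
is $\geq 0$, and the weighted sum of the two integrated $\mathrm{EVI}$s reduces to
\begin{equation*}
\frac{1}{\delta}\int_0^\delta \cH(P_r\mu_t)\,\dd r~\leq~(1-t)\cH(\mu_0)+t\cH(\mu_1).
\end{equation*}

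Finally, by Proposition \ref{prop:entropy-dissipation} the map $r\mapsto \cH(P_r\mu_t)$ is non-increasing, hence $\cH(P_r\mu_t)\nearrow\ell\in(-\infty,+\infty]$ as $r\searrow 0$; lower semicontinuity of $\cH$ w.r.t.\ weak convergence combined with $P_r\mu_t\rightharpoonup\mu_t$ forces $\ell=\cH(\mu_t)$. Sending $\delta\to 0$ therefore yields $\cH(\mu_t)\leq(1-t)\cH(\mu_0)+t\cH(\mu_1)$, as desired.

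The main technical point is the passage to the limit $\eps\to 0$ in the left endpoint of the integrated $\mathrm{EVI}$: one needs actual convergence $\cW^2(P_\eps\mu_t,\mu_i)\to\cW^2(\mu_t,\mu_i)$ rather than mere $\liminf$, which is why the monotonicity under convolution from Proposition \ref{prop:monotonconvolutionW} is essential. Everything else is a routine combination of $\mathrm{EVI}$, the geodesic identity, and continuity of $\cH$ at $r=0^+$ along the semigroup.
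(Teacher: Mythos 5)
Your proposal takes a genuinely different route from the paper: the paper disposes of the corollary in one line by citing \cite[Thm.~3.2]{DS08}, where the implication ``EVI $\Rightarrow$ geodesic convexity'' is proved in an abstract metric setting, whereas you reprove that implication directly here. Your skeleton is the standard one and is fine: integrate the EVI along the flow started at the geodesic midpoint $\mu_t$ with observers $\mu_0,\mu_1$, take the convex combination, use the identity $(1-t)a^2+tb^2=t(1-t)(a+b)^2+((1-t)a-tb)^2$ together with the triangle inequality and the geodesic relations $\cW(\mu_t,\mu_0)=t\,\cW(\mu_0,\mu_1)$, $\cW(\mu_t,\mu_1)=(1-t)\,\cW(\mu_0,\mu_1)$, and finally let $\delta\to0$ using monotonicity of $r\mapsto\cH(P_r\mu_t)$ from Proposition \ref{prop:entropy-dissipation}. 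A self-contained proof of this kind is a reasonable alternative to the citation, but as written it has two concrete gaps.

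First, Proposition \ref{prop:monotonconvolutionW} does not give $\cW(P_\eps\mu_i,\mu_i)\to0$: it gives $\cW(\mu_0*k,\mu_1*k)\le\cW(\mu_0,\mu_1)$ and $\cW(\mu_0*k_\eps,\mu_1*k_\eps)\to\cW(\mu_0,\mu_1)$ for rescaled kernels $k_\eps(x)=\eps^{-d}k(x/\eps)$, which is a different statement (and the semigroup kernels $\psi_\eps$ are not of that rescaled form in general). The convergence $\cW(P_\eps\mu_i,\mu_i)\to0$ does hold when $\cH(\mu_i)<\infty$, but it requires the heat-flow action estimate $\cW(P_\eps\mu_i,\mu_i)^2\le\eps\int_0^\eps\cI(P_s\mu_i)\,\dd s\le\eps\big(\cH(\mu_i)-\cH(P_\eps\mu_i)\big)$, i.e.\ one must verify that $(P_s\mu_i,\dgrad\rho_s\,Jm)_{s\in[0,\eps]}$ is admissible in the sense of Definition \ref{def:ce}; this is essentially the content of Claim \ref{cl:EVIforJP} and must be invoked or redone (the same ingredient is also what justifies integrating the Dini-derivative inequality, since it gives local absolute continuity of $r\mapsto\cW(P_r\mu_t,\mu_i)$ on $(0,\infty)$). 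A cleaner fix, closer to the paper's own EVI proof, is to mollify the observers as well: apply the EVI with reference $P_\eps\mu_i$, use $\cH(P_\eps\mu_i)\le\cH(\mu_i)$, bound the lower-endpoint bracket via $\cW(P_\eps\mu_t,P_\eps\mu_i)\le\cW(\mu_t,\mu_i)$, and handle the upper endpoint with weak lower semicontinuity of $\cW$ (Theorem \ref{thm:distance}) only. Second, your last step appeals to ``lower semicontinuity of $\cH$ w.r.t.\ weak convergence''; since the reference measure is Lebesgue, $\cH$ is \emph{not} weakly lower semicontinuous on $\cP(\R^d)$ (mass spreading to infinity can make the entropy drop), so the needed inequality $\liminf_{r\to0}\cH(P_r\mu_t)\ge\cH(\mu_t)$ requires an argument adapted to the specific family $P_r\mu_t=\mu_t*\psi_r$, e.g.\ writing $\cH$ as a relative entropy w.r.t.\ a probability reference plus a moment term and controlling that moment under convolution. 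You cannot sidestep this by assuming $\mu_t\in D(\cH)$, since finiteness of $\cH(\mu_t)$ is exactly what is at stake; this point is also where your argument demands more than the paper's own (unproved but only-at-finite-entropy) claim of ``continuity of $\cH$ along the semigroup''.
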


\begin{proof}
  This is a direct consequence of Theorem \ref{thm:EVIforJP} and the fact,
  proved in \cite[Thm. 3.2]{DS08}, that in a general setting the Evolution
  Variational Inequality implies geodesic convexity.
\end{proof}

We finish by giving an equivalent and more intuitive definition of the
distance $\cW$ in the present setting of a translation invariant jump
kernel $J$. We show that it coincides with $\tilde\cW$ defined in
\eqref{eq:metricnaiv}. We introduce the following shorthand
notation. Given functions $\rho:\R^d\to\R_+$ and $\psi:\R^d\to\R$ we
write
\begin{align*}
 \cA'(\rho,\psi) := \frac12\int \big(\psi(y)-\psi(x)\big)^2\hat\rho(x,y)J(x,\dd y) m(\dd x)\ .
\end{align*}
For two probability densities $\bar\rho_0,\bar\rho_1$ w.r.t. $m$ and
$T>0$ let us denote by $\CE'_{T}(\bar\rho_0,\bar\rho_1)$ the
collection of pairs $(\rho,\psi)$ satisfying the following conditions:
\begin{align} \label{eq:conditions} 
 \left\{ \begin{array}{ll}
{(i)} & \rho : [0,T]\times\R^d\to \R_+ \text{ is measurable}\;;\\ 
{(ii)} &  \rho_t \text{ is a probability density for all $t \in [0,T]$}\;;\\
{(iii)} &  \text{The curve } t\mapsto\mu_t:=\rho_tm\ \text{is weakly continuous}\;; \\
{(iv)} & \psi  : [0,T]\times\R^d \to \R  \text{ is measurable}\;;\\ 
{(v)} & \partial_t\rho_t + \dgrad\cdot(\hat\rho_t\dgrad\psi_t)~=~0\;,\ \rho_0=\bar\rho_0\ ,\ \rho_T=\bar\rho_1\ .\\
\end{array} \right.
\end{align}
Here the continuity equation (v) is understood in the sense that for
every test function $\phi\in C^\infty_c((0,T)\times\R^d)$ we have
\begin{align*}
    \int_0^1\int\partial_t\phi\rho_t\dd m\dd t + \frac12\int_0^1\int\dgrad\phi(x,y)\dgrad\psi_t(x,y)\hat\rho_t(x,y)J(x,\dd y)m(\dd y)\dd t~=~0\ .
\end{align*}

\begin{proposition}\label{prop:equiv-definition}
  Assume that $m$ is Lebesgue measure and that $J(x,\dd y)=j(y-x)\dd
  y$ for a function $j:\R^d\setminus\{0\}\to\R^+$ that is strictly
  positive. Moreover, assume that $J$ satisfies
  \ref{ass:regularity}. Let $\bar\mu_i=\bar\rho_im\in \cP(\R^d)$ for
  $i=0,1$ such that $\cI(\bar\mu_i)$ is finite. Then we have
\begin{align*}
  \cW(\bar\mu_0, \bar\mu_1)^2 = 
  \inf \bigg\{ \int_0^1 \cA'(\rho_t, \psi_t) \dd t \ : \ {(\rho, \psi) \in \CE'_{1}(\bar\rho_0,\bar\rho_1)} \bigg\}\;.
\end{align*}
\end{proposition}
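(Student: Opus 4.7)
The plan is to establish two inequalities separately, viewing the right-hand side as the restriction of the $\cW$-variational problem to velocity measures of the form $\hat\rho\dgrad\psi\cdot Jm$.

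For the easy direction $\cW(\bar\mu_0,\bar\mu_1)^2 \leq \inf\int_0^1\cA'\dd t$, given any $(\rho,\psi)\in\CE'_1(\bar\rho_0,\bar\rho_1)$ of finite $\cA'$-action I would set $\mu_t:=\rho_tm$ and $\bnu_t:=\hat\rho_t\dgrad\psi_t\cdot Jm$. A direct comparison of the two weak continuity equations shows $(\mu,\bnu)\in\CE_1(\bar\mu_0,\bar\mu_1)$, and the homogeneity of $\alpha$ exploited in Lemma~\ref{lem:densities} yields $\cA(\mu_t,\bnu_t)=\cA'(\rho_t,\psi_t)$, so taking the infimum gives the inequality.

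For the reverse direction I would start from a $\cW$-minimizer $(\mu,\bnu)\in\CE_1(\bar\mu_0,\bar\mu_1)$ furnished by Proposition~\ref{prop:minimizers} and regularize by setting $\mu^\eps_t:=\mu_t*\psi_\eps$ and $\bnu^\eps_t:=\bnu_t*\psi_\eps$ in the sense of \eqref{eq:convolutiondiagonal}, where $\psi_\eps$ is the density of $P_\eps$. Proposition~\ref{prop:monotonconvolution} keeps the total action below $\cW^2$, and Assumption~\ref{ass:regularity} ensures that $\rho^\eps_t$ is smooth, strictly positive, and rapidly decaying locally uniformly in $t$. By Lemma~\ref{lem:densities} I may write $\bnu^\eps_t = w^\eps_t\hat\rho^\eps_t Jm$. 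The critical step is to represent the $L^2(\hat\rho^\eps_t Jm)$-orthogonal projection $\pi_{\rho^\eps_t}w^\eps_t\in T_{\rho^\eps_t}$, characterized in Proposition~\ref{prop:tangentspace-density}, as a genuine discrete gradient $\dgrad\psi^\eps_t$: subtracting the divergence-free component of $w^\eps_t$ leaves the continuity equation intact and only decreases the action, and I would obtain $\psi^\eps_t$ by solving the non-local elliptic equation $-\dgrad\cdot(\hat\rho^\eps_t\dgrad\psi^\eps_t) = \partial_t\rho^\eps_t$ variationally in the Hilbert completion of $C^\infty_c(\R^d)/\mathrm{const}$ under the quadratic form $\phi\mapsto \tfrac12\int|\dgrad\phi|^2\hat\rho^\eps_t\dd(Jm)$. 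Strict positivity of $j$, together with the smoothness and decay of $\rho^\eps_t$, makes this form a non-degenerate inner product and the mean-zero source $\partial_t\rho^\eps_t$ a bounded functional; Lax--Milgram then produces $\psi^\eps_t$, with Borel dependence on $t$ coming from a standard measurable-selection argument. The resulting $(\rho^\eps,\psi^\eps)$ lies in $\CE'_1(\bar\rho_0*\psi_\eps,\bar\rho_1*\psi_\eps)$ with $\cA'$-action at most $\cW^2$.

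To restore the true endpoints I would concatenate with short arcs of the semigroup $P$. The trajectory $s\mapsto P_s\bar\mu_i = \rho^{(i)}_s m$ is already in $\CE'$-form: the logarithmic-mean identity $\dgrad\rho/\hat\rho = \dgrad\log\rho$ rewrites $\partial_s\rho^{(i)}_s = \cL\rho^{(i)}_s$ as $\partial_s\rho^{(i)}_s + \dgrad\cdot(\hat\rho^{(i)}_s\dgrad\log\rho^{(i)}_s) = 0$, and the associated $\cA'$-action equals the non-local Fisher information $\cI(P_s\bar\mu_i)$ of \eqref{eq:nonlocalFisher-information}. The convexity bound $\cI(\mu*\psi_s)\leq\cI(\mu)$ used in the proof of Theorem~\ref{thm:EVIforJP}, combined with the hypothesis $\cI(\bar\mu_i)<\infty$, controls both the forward trajectory on $[0,\eps]$ and its time-reversal by $\eps\cI(\bar\mu_i)$. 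Concatenating these two short arcs with $(\rho^\eps,\psi^\eps)$ and reparametrizing back to $[0,1]$ via Lemma~\ref{lem:equivcharacter} produces an admissible curve in $\CE'_1(\bar\rho_0,\bar\rho_1)$ of $\cA'$-action at most $\cW^2 + o(1)$, and sending $\eps\to 0$ completes the proof. The main obstacle is the elliptic-solvability-with-measurable-selection step; this is precisely what forces both the strict positivity hypothesis on $j$ and the regularity provided by Assumption~\ref{ass:regularity}.
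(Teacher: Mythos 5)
Your proposal is correct and follows essentially the same route as the paper: the easy inequality by regarding pairs $(\rho,\psi)$ as special solutions of the measure-valued continuity equation, and the converse by smoothing a $\cW$-minimizer with $P_\eps$, replacing its velocity by its gradient-type representative, and repairing the endpoints with semigroup arcs whose $\cA'$-action is the Fisher information, controlled by $\eps\sqrt{\cI(\bar\mu_i)}$ before letting $\eps\to0$. Your Lax--Milgram construction of $\psi^\eps_t$ is just a repackaging of the projection the paper gets from Propositions \ref{prop:metricderivative} and \ref{prop:tangentspace-density}, with the same key point (strict positivity of $j$ and of $\rho^\eps_t$ ensuring that $L^2(\hat\rho^\eps_t Jm)$-limits of discrete gradients are again discrete gradients) doing the real work in both versions.
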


Note that the assumptions above on the jump kernel $J$ are satisfied
by the kernel $J_\a$ associated to the fractional Laplacian.

\begin{proof}
  The inequality `$\leq$' follows easily by noting that the infimum
  in the definition of $\cW$ is taken over a larger set. Indeed, given
  a pair $(\rho,\psi)\in\CE'_{1}(\bar\rho_0,\bar\rho_1)$ such that
  $\int_0^1\cA'(\rho_t,\psi_t)\dd t$ is finite we set $\mu_t=\rho_tm$
  and define $\bnu_t\in\cM_{loc}(G)$ by $\bnu_t(\dd x,\dd
  y)=\dgrad\psi_t(x,y)\hat\rho_t(x,y)J(x,\dd y)m(\dd x)$. Then
  obviously we have $\cA'(\rho_t,\psi_t)=\cA(\mu_t,\bnu_t)$ and it is
  easily checked using Lemma \ref{lem:integrability} that
  $(\mu,\bnu)\in\CE_{1}(\bar\mu_0,\bar\mu_1)$.

  Let us now prove the opposite inequality `$\geq$'. To this end, note
  that by a reparametrization argument similar to Lemma
  \ref{lem:equivcharacter} the square root of the infimum on the right
  hand side coincides with
  \begin{align*}
    \inf\left\{\int_0^T\sqrt{\cA'(\rho_t,\psi_t)}\dd t\ :\
    (\rho,\psi)\in\CE'_{T}(\bar\rho_0,\bar\rho_1)\right\}\ .
  \end{align*}
  We set $\mu^{i,\eps}_t:=P_{t}[\bar\mu_i]=\rho^{i,\eps}_tm$ and
  $\psi^{i,\eps}_t=\log\rho^{i,\eps}_t$ for $i=0,1$ and
  $t\in(0,\eps]$. It is easily checked, that the pair
  $(\rho^{i,\eps},\psi^{i,\eps})$ belongs to
  $\CE'_{\eps}(\bar\rho_i,\rho^{i,\eps}_1)$. Using the monotonicity
  of $\cI$ under convolution as in the proof of Claim
  \ref{cl:EVIforJP} we infer that
  \begin{align*}
    L^{i,\eps}~:=~\int_0^\eps\sqrt{\cA'(\rho_t^{i,\eps},\psi_t^{i,\eps})}\dd
    t~=~\int_0^\eps\sqrt{\cI(\mu^{i,\eps}_t)}\dd
    t~\leq~\eps\sqrt{\cI(\bar\mu_i)}\ .
  \end{align*}
  Now let $(\mu,\bnu)\in\CE_{1}(\bar\mu_0,\bar\mu_1)$ be a geodesic
  and set $\mu^\eps_t:=P_\eps[\mu_t]=\rho^\eps_tm$. Proposition
  \ref{prop:metricderivative} and the proof of Proposition
  \ref{prop:monotonconvolutionW} show that the curve
  $t\mapsto\mu^\eps_t$ is absolutely continuous w.r.t. $\cW$ and thus
  there is a family of optimal velocity measures $\tilde\bnu^\eps$. By
  Proposition \ref{prop:tangentspace-density} we have that
  $\tilde\bnu_t^\eps=w^\eps_t\hat\rho^\eps_tJm$ where $w^\eps_t$
  belongs to $T_\rho^\eps$. Note that $\rho_t^{\eps}>0$ by Assumption
  \ref{ass:regularity} and thus $\hat\rho_t^{\eps}>0$ for all
  $t\in(0,1)$ and moreover $j>0$. Hence it is easily checked any limit
  of discrete gradients in $L^2$ w.r.t. the measure
  $\hat\rho_t^{\eps}Jm(\dd x,\dd y)=\hat\rho_t^{\eps}(x,y)j(y-x)\dd
  x\dd y$ coincides again a.e. with a discrete gradient. Thus we have
  $w_t^\eps=\dgrad\psi_t^\eps$ a.e. for a suitable function
  $\psi^\eps:(0,1)\times\R^d\to\R$. Now observe that
  $(\rho^\eps,\psi^\eps)\in\CE'
_{1}(\rho^\eps_0,\rho^\eps_1)$ and
  \begin{align*}
    L^\eps~&:=~\int_0^1\sqrt{\cA'(\rho^\eps_t,\psi^\eps_t)}\dd
    t~=~\int_0^1\sqrt{\cA(\mu^\eps_t,\tilde\bnu^\eps_t)}\dd
    t\\~&\leq~\int_0^1\sqrt{\cA(\mu_t,\bnu_t)}\dd
    t~=~\cW(\bar\mu_0,\bar\mu_1)\ ,
  \end{align*}
  where we have used Proposition \ref{prop:monotonconvolution} in the
  second line. Finally we concatenate the three curves
  $(\rho^{0,\eps},\psi^{0,\eps}),(\rho^{\eps},\psi^{\eps})$ and
  $(\rho^{1,\eps},\psi^{1,\eps})$ to obtain a curve
  $(\tilde\rho^{\eps},\tilde\psi^{\eps})\in\CE'_{1+2\eps}(\bar\rho_0,\bar\rho_1)$
  which satisfies
  \begin{align*}
    \int_0^{1+2\eps}\sqrt{\cA'(\tilde\rho_t^\eps,\tilde\psi_t^\eps)}\dd t~&=~L^{0,\eps}+L^\eps+L^{1,\eps}~\\
&\leq~\cW(\bar\mu_0,\bar\mu_1) + \eps(\cI(\bar\mu_0)+\cI(\bar\mu_1))\ .
  \end{align*}
  Letting $\eps$ go to zero now yields the claim.
\end{proof}

\bibliographystyle{plain}
\bibliography{literature}

\end{document}